\documentclass[final,3p,times,number,sort&compress]{elsarticle}

\usepackage{amsmath,amssymb,amsfonts,amsthm}
\usepackage{mathtools}
\usepackage{bm}
\usepackage{algorithm}
\usepackage{algorithmic}
\usepackage{graphicx}
\graphicspath{{./}}
\usepackage{booktabs}
\usepackage{hyperref}
\usepackage{cleveref}
\usepackage{subcaption}
\usepackage{xcolor}
\usepackage{multirow}
\usepackage{siunitx}
\usepackage{enumitem}
\usepackage{lineno}
\usepackage{etoolbox}

\AtBeginEnvironment{table}{\linespread{1}\selectfont}
\AtBeginEnvironment{table*}{\linespread{1}\selectfont}
\AtBeginEnvironment{figure}{\linespread{1}\selectfont}
\AtBeginEnvironment{figure*}{\linespread{1}\selectfont}
\AtBeginEnvironment{algorithm}{\linespread{1}\selectfont}
\AtBeginEnvironment{thebibliography}{\linespread{1}\selectfont}
\modulolinenumbers[5]

\DeclareMathOperator*{\argmin}{arg\,min}
\DeclareMathOperator*{\argmax}{arg\,max}

\DeclareMathOperator{\support}{h}
\newcommand{\R}{\mathbb{R}}

\newcommand{\norm}[1]{\left\lVert #1 \right\rVert}

\newcommand{\state}{\bm{x}}
\newcommand{\ctrl}{\bm{u}}
\newcommand{\pos}{\bm{r}}

\newcommand{\reachset}{\mathcal{R}}
\newcommand{\ctrlset}{\mathcal{U}}

\newcommand{\obstset}{\mathcal{O}}

\newtheorem{theorem}{Theorem}
\newtheorem{proposition}[theorem]{Proposition}

\theoremstyle{definition}
\newtheorem{definition}[theorem]{Definition}
\newtheorem{remark}[theorem]{Remark}

\journal{Aerospace Science and Technology}

\begin{document}

\begin{frontmatter}

\title{Convex Pursuit--Evasion Games for Spacecraft Proximity Operations on Circular and Elliptical Orbits\tnoteref{draft}}
\tnotetext[draft]{Accepted for publication at \emph{Aerospace Science and Technology}, 11 September 2026.}

\author[inst1]{Omer Burak Iskender\corref{cor1}}
\ead{iske0001@e.ntu.edu.sg}

\cortext[cor1]{Corresponding author.}
\address[inst1]{School of Electrical and Electronic Engineering, Nanyang Technological University, Singapore}

\begin{abstract}
Inspecting or servicing a non-cooperative spacecraft is a two-player
game: the target can thrust to defeat the inspector's plan.
Hamilton--Jacobi--Isaacs reachability answers such games exactly but
its cost grows exponentially with the state dimension, while
learning-based controllers scale yet certify nothing.  Convex
formulations are the usual escape, and are almost always cast as
convex--concave saddle-point problems.  That framing does not survive
contact with the terminal-distance orbital
game: once a small effort regularization is included the payoff is
convex in the evader's controls as well as the pursuer's, so no pure
open-loop saddle point need exist.  In its place we derive two exact
one-sided guarantees, both read from the two players' terminal
reachable sets through their support
functions: an escape certificate that proves the target
can hold a guaranteed standoff against every admissible inspector
control, and a security strategy that bounds the miss distance
the inspector can force against every admissible target.  Together they
bracket the engagement without assuming either player is rational, and
over two hundred perturbed trials the escape certificate separated
capture from escape without a single error.  A projected extragradient
method then supplies a concrete strategy pair in about twenty-five
milliseconds, certified in place by its best-response gaps rather than
by a convergence theorem that does not apply in this regime.  The same
construction carries unchanged to elliptical reference
orbits through the Yamanaka--Ankersen dynamics, where the orbital phase
at which an engagement begins moves the miss distance by nearly a
factor of two, a dependence reproduced by nonlinear Keplerian
propagation.  Under receding-horizon play the inspector captures in
half of ten representative engagements.  Angles-only
navigation error, keep-out zones, multiple pursuers and closed-loop
play are treated as extensions, each with the
guarantee it does and does not carry.
\end{abstract}

\begin{keyword}
spacecraft proximity operations \sep
orbital pursuit--evasion \sep
reachability certificate \sep
extragradient method \sep
support-function reachable sets \sep
Yamanaka--Ankersen relative dynamics
\end{keyword}

\end{frontmatter}


\section{Introduction}\label{sec:intro}

Consider an inspector spacecraft dispatched to image a defunct
satellite at close range in low Earth orbit.  A rendezvous plan
computed against a stationary target is optimistic here, because it
never accounts for the move the target will actually make.

Rendezvous targets are conventionally graded by how much they help the
chaser~\cite{Burak2020}, from \emph{cooperative} (actively assisting the
approach) through \emph{collaborative} (built to be captured, with
markers and grapple fixtures) to \emph{uncooperative} (no assistance, no
communication link, attitude motion unknown in advance).  The engagement
studied in this paper
sits one step beyond that scale: the target is not merely unhelpful but
adversarial, since if it retains attitude control and even modest
translational authority it can thrust to keep the inspector's camera
off its damaged face.  Missions of exactly this kind (orbital
inspection, on-orbit servicing, and active debris removal) are moving
from studies to flight, and the population of maneuverable,
non-cooperative resident space objects that they must approach is
growing~\cite{Weeden2014,Hobbs2020,Long2026AutoML,Cheng2025ast_review}.
The honest way to plan against such a target is to treat inspector and
target as the two players of a differential game~\cite{Isaacs1965}, in
which each optimizes against the other's best response.

Solving that game is the hard part.  Hamilton--Jacobi--Isaacs (HJI)
reachability~\cite{Mitchell2005,Tomlin2000,Bansal2017} computes exact
equilibria but discretizes the state space, so its cost grows
exponentially with dimension and it is confined in practice to the
four-state planar problem, well short of the eight and twelve states
that two-player and multi-pursuer orbital engagements require.  Deep
reinforcement learning~\cite{Li2023cja,Yang2025cja_rl_review} scales
to those dimensions but returns a policy with no equilibrium guarantee
and no bound on how it behaves outside its training distribution, an
uncomfortable position for a safety-critical maneuver.  What is
missing is a formulation that keeps a usable guarantee, runs at the
millisecond timescale an onboard planner needs, and is not tied to a
circular reference orbit.

This paper supplies one for the terminal-distance game, in which the
inspector minimizes and the target maximizes their separation at a
fixed final time, and it does so by exploiting a structural fact.
Because the
Hill--Clohessy--Wiltshire (HCW)~\cite{Clohessy1960} and
Yamanaka--Ankersen (YA)~\cite{Yamanaka2002} relative-motion models are
linear, each player's terminal position ranges over a reachable set
that is an exact function of its control bounds, and the game is
decided by the geometry of those two sets.  That geometry is what we
compute.  It yields two guarantees, each exact and each computed from
support functions~\cite{Althoff2021,Girard2005} rather than from a
grid: a scalar \emph{escape certificate} $\phi_N$ whose negativity
proves the target can hold a guaranteed standoff against \emph{every}
admissible inspector maneuver, and a \emph{security strategy} whose
value bounds the miss distance the inspector can force against
\emph{every} admissible target.  Between the two lies an interval that
brackets the engagement outcome without assuming either player is
rational.  Across $200$ perturbed trials the escape certificate
separates capture from escape with no error
(\cref{sec:reachable,sec:solver_char}).  A projected extragradient
method~\cite{Korpelevich1976,Nemirovski2004} then returns a concrete
strategy pair in about \SI{25}{\milli\second}, roughly forty times
faster than rebuilding the iterative-best-response programs from cold
start; that ratio is a statement about cold-start cost and not about the
method being intrinsically faster than a well-implemented QP loop, as
\cref{sec:solver_char} makes precise.  We certify the quality of that
pair in place using its best-response gaps, which hold pursuer-side
optimality to under a metre (\cref{sec:game,sec:case_a}), rather than
asserting an equilibrium the problem may not possess.  Because the
construction depends on the dynamics only through the terminal support
functions, the same solver spans circular and elliptical reference
orbits, and the eccentricity dependence it predicts is confirmed
against nonlinear Keplerian propagation (\cref{sec:elliptical}).

The paper therefore contributes the following.
\begin{enumerate}[leftmargin=*,itemsep=1pt,topsep=3pt]
\item Two \emph{exact} one-sided guarantees for the terminal-distance
  game, an escape certificate and a pursuer security bound, both read
  from support functions and both valid against every admissible
  opponent (\cref{sec:capture_cert,sec:security}).
\item A curvature result showing the payoff is convex in \emph{both}
  players, so that no pure open-loop saddle is claimed, and a
  millisecond solver certified per run by its best-response gaps in
  place of a convergence theorem that does not apply here
  (\cref{sec:saddle_form,sec:extragradient}).
\item One construction that spans circular and elliptical reference
  orbits without modification, validated against nonlinear Keplerian
  and $J_2$ propagation (\cref{sec:elliptical}).
\item An account of the operational extensions, keep-out zones,
  multiple pursuers, navigation error and receding-horizon play, that
  states the guarantee each does and does not carry
  (\cref{tab:assumptions,sec:scope,sec:discussion}).
\end{enumerate}

\paragraph{Scope}
The exact guarantees above concern the open-loop game with box-bounded
controls.  Keep-out avoidance, multiple pursuers and closed-loop play are
extensions that carry weaker claims;
\cref{tab:assumptions,sec:scope} pair each assumption with the guarantee
it supports, and \cref{sec:nonlinear_validation,sec:solver_char} bound
what linearization error and relative-navigation error do to the
outcome.

\section{Related Work}\label{sec:related}

\subsection{Orbital Pursuit--Evasion}

Differential game theory for spacecraft pursuit--evasion (PE) dates to
early studies of satellite interception under Keplerian
dynamics~\cite{Ho1965,Guelman1990}.  The numerical line runs through
direct collocation for the impulsive two-player
game~\cite{Stupik2012}, canonical three-dimensional
treatments~\cite{Pontani2009}, nonlinear programming in the Hill
frame~\cite{Jagat2017}, fuel-constrained impulsive
analysis~\cite{Woodford2023}, and primer-vector methods for continuous
low thrust~\cite{Li2020}; a recurring finding is that the chaser/target
thrust-ratio asymmetry, rather than any single control law, sets the
outcome~\cite{Ye2021,Liu2020ast_thrust}.  Recent work has widened the
setting to cislunar three-body dynamics~\cite{Hafer2024} and to
reachable-set and game-theoretic maneuver strategies for space-domain
awareness~\cite{Geng2023,Cavalieri2023}.  Two contributions sit
particularly close to this paper: Ma and
Zhang~\cite{Ma2024ast_deltaV} screen impulsive engagements by the
$\Delta V$ needed to cover the target's reachable domain, a
budget-side counterpart to the support-function screen used here, and
Huo et al.~\cite{Huo2024ast_encirclement} build a multi-pursuer
encirclement strategy from the target's reachable area under maneuver
uncertainty, a geometry close to the weighted surrogate of
\cref{sec:multipursuer}.

Cutting across that literature is the older division between indirect
and direct solution methods, and it is worth saying plainly where this
paper sits.  Indirect methods apply the calculus of variations to the
game: they write the Hamiltonian for both players, derive the Euler
--Lagrange and saddle-point necessary conditions, and solve the
resulting two-point boundary-value problem in the state and costate.
This is the classical route~\cite{Isaacs1965,Ho1965} and it remains in
use for orbital engagements, through multiple-shooting and
collocation~\cite{Stupik2012,Pontani2009}, primer-vector analysis for
continuous low thrust~\cite{Li2020}, and analytically exact gradients
for the elliptical game~\cite{Pang2024PreciseGradient}.  Its appeal is
sharpness: the necessary conditions hold at the true equilibrium, with
no discretization of the control history.  Its costs are equally
well known: the boundary-value problem is sensitive to the initial
costate guess, the switching structure has to be assumed in advance,
and the solve time is minutes rather than milliseconds, so nothing is
returned that could run onboard.  The present paper is a direct method
in that taxonomy.  It discretizes the controls, keeps the resulting
problem convex on the pursuer side, and pays for the discretization
with a finite-dimensional problem that solves in tens of milliseconds
and carries per-run certificates.  The certificates are what replace
the sharpness of the necessary conditions: instead of asserting an
equilibrium, we bound how far the returned pair can be from one.  The
two families are complementary, and an indirect refinement of a direct
solution is a sensible pairing that we do not pursue here.

Two lines of recent work bear directly on the present method, and it
is useful to be specific about the trade-offs.  The first pursues
online tractability.  Jia et al.~\cite{Jia2025ClosedLoop} reduce the
elliptical-orbit game to a discounted minimum-time transfer, solve it
through a sequence of convex subproblems, and synthesize a
receding-horizon pursuit law; a companion
paper~\cite{Jia2025Reachability} fixes the terminal time from an
analytic ellipsoidal reachable-set boundary.  This is the closest
comparator to our approach and it runs in real time, but the
reduction is one-sided: it optimizes the pursuer against a
worst-case evader and returns no two-sided certificate, whereas our
construction bounds both players' guaranteed outcomes.  Pang et
al.~\cite{Pang2024PreciseGradient} solve the elliptical game to high
accuracy with analytically exact gradients, at the price of a
two-point boundary-value problem that is sensitive to its initial
guess and yields no feedback law.  The second line pursues richer
game structure: impulsive Stackelberg
equilibria~\cite{LiLuo2025Stackelberg}, three-player and two-on-one
engagements~\cite{Sun2025ThreePlayer,Zhao2025TwoOnOne}, and analytic
winning regions for target--attacker--defender
games~\cite{Fu2026TAD,Yang2026WinningRegions}.  These enlarge what can
be modeled but rely on closed-form or search-based solvers without the
polynomial-time guarantees of the convex problem solved here.
Learning-based controllers are a third, orthogonal line, surveyed
by Yang et al.~\cite{Yang2025cja_rl_review} and exemplified by Li et
al.~\cite{Li2023cja}, with recent multi-agent, distributed, and
incomplete-information
variants~\cite{Hu2024ast_multiagent,Huang2026ast_diverse,LiYe2022ast_switching,Wang2025LSTM};
they scale to high dimension but certify nothing.  Cheng et
al.~\cite{Cheng2025ast_review} review the broader field.  Against this
backdrop the present work occupies a specific niche: exact one-sided
guarantees and millisecond runtime on linear relative dynamics, for
both circular and elliptical orbits.  \Cref{tab:comparison} summarizes
the comparison.

\begin{table}[htbp]
\centering\small
\caption{Methodological position relative to representative spacecraft
  PE methods.}
\label{tab:comparison}
\resizebox{\textwidth}{!}{%
\begin{tabular}{@{}llllll@{}}
\toprule
 & This work & Jia~\cite{Jia2025ClosedLoop} & Stupik~\cite{Stupik2012} & Li~\cite{Li2023cja} & Hafer~\cite{Hafer2024} \\
\midrule
Method        & Reachable-set + EG & Convex min-time & Direct collocation & DRL & HJI \\
Guarantees    & Two one-sided bounds & One-sided & Local optimum & None & Saddle-point \\
Runtime       & $\sim$25~ms     & real-time     & minutes       & seconds & minutes \\
Orbit type    & Circ.+Ellip.    & Elliptical    & Circular      & Circular & CR3BP \\
Dimension     & 4D planar, 6D verified & 6D     & 6D            & 6D      & 6D \\
Players       & $n_P$ vs.\ 1    & 1 vs.\ 1      & 1 vs.\ 1      & 1 vs.\ 1 & 1 vs.\ 1 \\
Reachable set & Exact (support fn.) & Ellipsoidal & No          & No      & No \\
\bottomrule
\end{tabular}%
}
\end{table}

\subsection{Relative Motion on Elliptical Orbits}\label{sec:related_elliptical}

Linearized relative motion about an elliptical reference orbit goes
back to Tschauner and Hempel~\cite{Tschauner1965}; the state
transition matrices that followed~\cite{Carter1998,Broucke2003}
carried integrals that grow singular near circular orbits, which
Yamanaka and Ankersen~\cite{Yamanaka2002} removed with a
$J$-integral formulation well conditioned at every eccentricity.  That
line has since been extended to eccentric formation
control~\cite{Inalhan2002}, perturbed non-circular
STMs~\cite{Gim2003}, and orbit-element formulations of nonlinear
Riccati formation dynamics~\cite{Vazquez2021ast_riccati}, and is
surveyed by Sullivan et al.~\cite{Sullivan2017}.  Two recent results
bear directly on \cref{sec:elliptical}: an analytical
linear-quadratic PE strategy for arbitrary Keplerian reference orbits
via the Tschauner--Hempel differential Riccati
equation~\cite{Fu2025ast_keplerian}, and a theory-of-functional-connections
treatment of analytic rendezvous boundary conditions in elliptical PE
problems~\cite{Zhang2025ast_tfc}.  We adopt the Yamanaka--Ankersen STM
because its conditioning across the whole eccentricity range is what
lets one solver cover circular and elliptical orbits alike.

\subsection{Reachability for Linear Orbital Systems}

Hamilton--Jacobi (HJ) reachability~\cite{Mitchell2005,Tomlin2000}
provides rigorous backward reachable tubes that characterize the set
of states from which one player can guarantee a particular outcome;
the approach has been applied to aerial pursuit--evasion and
multi-player reach--avoid
problems~\cite{Chen2018,Fisac2015,Bansal2017,Herbert2017}.  Their grid
cost is the exponential one already noted in \cref{sec:intro}.

For linear systems with bounded inputs, forward reachable sets admit
exact set-valued representations that avoid this curse, among them
zonotopes~\cite{Girard2005}, ellipsoids~\cite{Kurzhanski2000}, and
support-function polytopes~\cite{Althoff2021}; support functions are
attractive here because they separate the set representation from the
propagation, reducing the reachable set to scalar evaluations along
template directions.  In the spacecraft setting the domain has been
approximated polyhedrally for relative
motion~\cite{Shao2023cja_reachable} and extended to multi-impulse
elliptical maneuvers~\cite{Zhang2025cja_reachable}.  Closest to the
present use, several groups turn the reachable domain into a
capture-feasibility test, through the $\Delta V$ needed to cover the
target's set~\cite{Ma2024ast_deltaV}, through time-dependent reachable
domains~\cite{Zhang2025TimeDependentRD}, or by feeding a
reachable-domain screen into equilibrium
computation~\cite{Xu2026NashReachable,Jia2025Reachability}.  A related
use is to delimit safe \emph{start} regions rather than capture sets,
as in approach to a tumbling target under rotating line-of-sight
constraints~\cite{iskender2026reachability}.  All use
ellipsoidal or sampled approximations; the escape certificate of
\cref{sec:capture_cert} instead uses the \emph{exact} support function
of the linear reachable set, which is what lets it certify rather than
estimate infeasibility.

\subsection{Saddle-Point Methods and Game Solvers}

Existence and uniqueness of equilibria in continuous games were
established by Rosen~\cite{Rosen1965} for concave $N$-person games and
by the minimax theorem of von Neumann and
Morgenstern~\cite{vonNeumannMorgenstern1944} for the two-player
zero-sum convex--concave case.  Neither applies here: the
terminal-distance payoff falls outside the convex--concave class
(\cref{prop:curvature}), which is why the treatment below is
certificate-based rather than an appeal to a minimax theorem.
Iterative best response (IBR), in which the players optimize
against each other in alternation, is a common practical heuristic and
reduces each subproblem to a standard convex program for off-the-shelf
QP/SOCP solvers~\cite{BoydVandenberghe2004,OSQP2020}, but it lacks
convergence guarantees and can cycle.

The extragradient method of Korpelevich~\cite{Korpelevich1976} and its
variants~\cite{Nemirovski2004,Tseng1995,Mokhtari2020} solve monotone
variational inequalities, of which convex--concave saddle problems are
a special case, and its two-step extrapolation removes the oscillation
of naive gradient descent--ascent; Facchinei and
Pang~\cite{Facchinei2003} give the standard reference.  We retain it as
a fast equilibrium-seeking solver, since projection onto box control
sets is element-wise clipping (\cref{sec:extragradient}), while being
explicit that its convergence theory does not apply in our operating
regime and using the certificates to judge the solution instead.  For constrained
games the alternatives are augmented-Lagrangian solvers such as
ALGAMES~\cite{LeCleach2022ALGAMES} and epigraph
reformulations~\cite{SoFan2023Epigraph}, which handle constraints
exactly but trade away global convergence; keep-out constraints here
are instead treated by successive convexification (\cref{sec:obstacle}),
a technique with a strong track record in aerospace trajectory
optimization~\cite{Malyuta2022}.

Convex programming underpins much of aerospace trajectory
generation~\cite{AcikmeseaPloen2007}, and model predictive control
extends it to single-agent rendezvous and
docking~\cite{Rawlings2017,Burak2020,Iskender2019}, including robust
tube-based~\cite{Mammarella2018ast_tubeMPC} and chance-constrained
variants for halo-orbit rendezvous~\cite{Sanchez2020ast_NRHO} and
convex deterministic tightenings of probabilistic
constraints~\cite{Oguri2024ChanceConstrained}.  All assume a passive or
worst-case target.  The present paper keeps the convex structure that
makes such controllers fast but replaces that assumption with an
explicit game; a nominal-MPC baseline
(\cref{sec:experiments}) measures the cost of wrongly assuming
passivity.

\section{Dynamics and Problem Formulation}\label{sec:dynamics}

\subsection{Relative Motion Model}

Relative motion between a target (chief) on a circular reference orbit
and a deputy (chaser) is modeled by the Hill--Clohessy--Wiltshire (HCW)
equations~\cite{Clohessy1960,Hill1878} in the local-vertical
local-horizontal (LVLH) frame ($x$ radial, $y$ along-track, $z$
cross-track):
\begin{equation}\label{eq:hcw_3d}
\begin{aligned}
\ddot{x} - 2n\dot{y} - 3n^2 x &= u_x, \\
\ddot{y} + 2n\dot{x} &= u_y, \\
\ddot{z} + n^2 z &= u_z,
\end{aligned}
\end{equation}
with mean motion $n = \sqrt{\mu/a^3}$, where $\mu$ is the Earth's
gravitational parameter and $a$ the reference semi-major axis.  In
state-space form $\dot{\state} = A_c \state + B_c \ctrl$.  The
cross-track axis decouples, so all experiments use the planar model
with $n_x = 4$ states $\state = [x, y, \dot{x}, \dot{y}]^\top$ and
$n_u = 2$ controls $\ctrl = [u_x, u_y]^\top$; the $n_p = 2$ position
components are $\pos = [x, y]^\top$.  A zero-order hold discretization
at timestep $\Delta t$ yields
\begin{equation}\label{eq:discrete}
\state_{k+1} = A_d\, \state_k + B_d\, \ctrl_k, \quad k = 0, \ldots, N-1,
\end{equation}
with $(A_d, B_d)$ obtained from the matrix exponential of the
augmented block $\bigl[\begin{smallmatrix} A_c & B_c \\ 0 & 0 \end{smallmatrix}\bigr]\Delta t$.
The transition matrix is standard~\cite{Clohessy1960}, and
\cref{app:params} records the discretization check: against an RK45
reference the ZOH model is within $0.01\%$ at the $\Delta t = 10$~s used
throughout.

Throughout, $\ctrl_k$ is an \emph{acceleration} in
\si{\meter\per\second\squared}, held constant over the interval
$[k\Delta t, (k+1)\Delta t)$ by the zero-order hold, and never an
impulsive velocity increment.  The distinction matters for reading the
tables: the thrust bounds $\bar{u}$ are quoted in
\si{\milli\meter\per\second\squared}, whereas the reported
$\Delta v = \sum_k \norm{\ctrl_k}_2 \Delta t$ is the integrated
velocity change in \si{\meter\per\second}, which is why a bound of
$10$~\si{\milli\meter\per\second\squared} over $300$~\si{\second}
appears as a few \si{\meter\per\second} of $\Delta v$.  An impulsive
formulation would keep everything that follows intact, with $B_d$
replaced by the velocity-injection map and $\Delta t$ dropped from the
$\Delta v$ sum; continuous thrust is used because the box bound is then a
bound on the acceleration a body-mounted thruster set delivers directly.

The cross-track reduction is verified rather than assumed:
\cref{sec:scope_checks} solves the same engagements on the full
six-state model with out-of-plane offsets and recovers the planar
numbers exactly when the cross-track state is zero.

\subsection{Linearization Validity}\label{sec:nonlinear_validation}

Whether the linear model can be trusted depends on the engagement
regime, so we quantify its error over one rather than assert a single
bound.  A $360$-case sweep compares the discrete HCW model against
full nonlinear Earth-centered-inertial propagation (two-body and
two-body-plus-$J_2$) across altitudes of $400$--$800$~km, initial
separations of $0.1$--$2$~km, closing speeds up to $1$~\si{\meter\per\second},
horizons of $300$ and $600$~\si{\second}, and both free drift and
maximum thrust.  \Cref{fig:linearization_error} summarizes the result.
The error is dominated by unmodeled differential $J_2$ and grows
roughly linearly with separation and about fourfold from $300$ to
$600$~\si{\second}.  Over a $300$~\si{\second} horizon it stays below
$0.16$~\si{\meter} for separations up to $600$~\si{\meter} and below
$0.43$~\si{\meter} out to $2$~\si{\kilo\meter}; over $600$~\si{\second}
it reaches $1.76$~\si{\meter} at $2$~\si{\kilo\meter}.  Relative to the
separation this is under $0.06\%$ at $300$~\si{\second} everywhere
tested, and under $0.26\%$ at $600$~\si{\second} for separations of
$300$~\si{\meter} or more; relative to the $50$~\si{\meter} capture
radius that decides outcomes, the model error is at most $0.9\%$ over
$300$~\si{\second} and $3.5\%$ over $600$~\si{\second}, so
capture and escape are distinguished unambiguously whenever the
terminal distance is more than a few metres from the boundary.
Replaying a single Case~A engagement against the same nonlinear
propagator with $J_2$ and atmospheric drag both active agrees to the
same order in the time domain: the peak inspector position error is
$0.056$~\si{\meter}, the linear and nonlinear inter-agent distances
differ by $0.024$~\si{\meter} ($0.03\%$) at the terminal time, and
drag alone contributes under $0.2$~\si{\milli\meter}.  The one effect
the planar model cannot represent is
differential-$J_2$ cross-track motion, which reaches $0.9$~\si{\meter}
over $600$~\si{\second} at the most demanding grid point.

\begin{figure}[!htbp]
\centering
\includegraphics[width=0.894\textwidth]{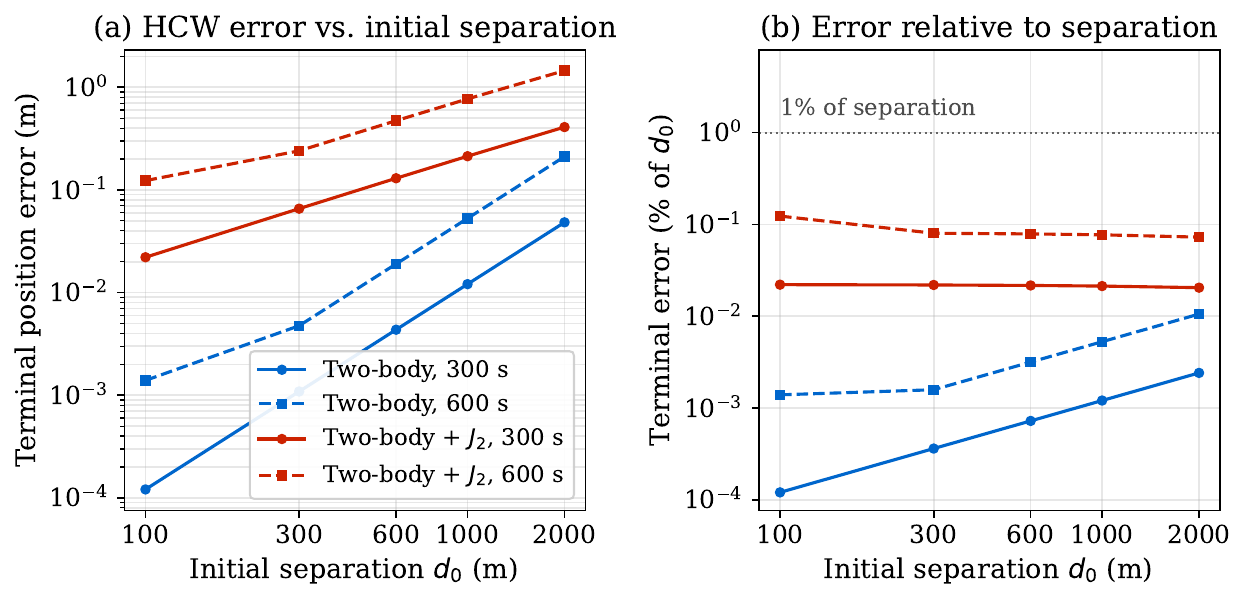}
\caption{Linearization error of the discrete HCW model against
  nonlinear ECI propagation, free-drift envelope over $400$--$800$~km
  altitude and $0$--$1$~\si{\meter\per\second} closing speed.
  (a)~Terminal position error versus initial separation for two-body
  and two-body-plus-$J_2$ truth at $300$ and $600$~\si{\second}.
  (b)~The same error as a fraction of separation; the $1\%$ reference
  line is never approached at $300$~\si{\second}.}
\label{fig:linearization_error}
\end{figure}

\subsection{Players, Controls, and the Game}\label{sec:problem}

Consider an engagement between $n_P$ pursuers
($i \in \{1,\ldots,n_P\}$) and a single evader.  Each agent
$j \in \{P_1, \ldots, P_{n_P}, E\}$ evolves under the discrete
dynamics
\begin{equation}\label{eq:dynamics_agent}
\state_k^{(j)} = A_d\, \state_{k-1}^{(j)} + B_d\, \ctrl_{k-1}^{(j)},
\quad k = 1, \ldots, N,
\end{equation}
with box-bounded controls
\begin{equation}\label{eq:ctrl_bounds}
\ctrl_k^{(j)} \in \ctrlset^{(j)} \triangleq
\bigl\{ \ctrl \in \R^{n_u} : \norm{\ctrl}_\infty \leq \bar{u}^{(j)} \bigr\},
\end{equation}
where $\bar{u}^{(P_i)} = \bar{u}_P$ for pursuers and
$\bar{u}^{(E)} = \bar{u}_E$ for the evader.  The thrust ratio
$\bar{u}_P/\bar{u}_E > 1$ throughout, reflecting the operational case
in which the inspector has a maneuvering advantage over the target.

The paper studies the terminal-distance pursuit--evasion game: the
pursuer team minimizes, and the evader maximizes, the squared distance
between the evader and the nearest pursuer at the final time,
\begin{equation}\label{eq:G1}
\min_{\{\ctrl^{(P_i)}\}} \max_{\ctrl^{(E)}} \;
\min_{i \in \{1,\ldots,n_P\}} \norm{\pos_N^{(P_i)} - \pos_N^{(E)}}_2^2,
\end{equation}
where $\pos_k^{(j)}$ is the position part of $\state_k^{(j)}$.  We
write $d_f = \min_i \norm{\pos_N^{(P_i)} - \pos_N^{(E)}}_2$ for the
resulting terminal miss distance.  Two capture conventions appear in
the paper and it helps to fix them here.  For the open-loop game,
capture at horizon $N$ means $d_f \leq r_{\mathrm{cap}}$ for a
specified capture radius.  For the closed-loop, receding-horizon play
of \cref{sec:tradespace}, capture is instead a first-passage event:
the engagement terminates at the first step whose separation enters
the capture ball, $t_{\mathrm{cap}} = \Delta t \cdot \min\{k :
\min_i\norm{\pos_k^{(P_i)}-\pos_k^{(E)}}_2 \leq r_{\mathrm{cap}}\}$,
after which the outcome is decided and later drift is irrelevant.
This is the standard game-of-kind convention~\cite{Isaacs1965} and it
is what the receding-horizon figures and tables report.
Minimum-time intercept and fuel-bounded variants lie outside the
present scope, as they introduce integer or nonlinear structure that
breaks the reachable-set geometry exploited here.

Throughout, ``inspector'' and ``pursuer'' denote the same agent $P$
and ``target'' and ``evader'' the same agent $E$.  The information
pattern is open-loop, with each player committing to a full control
sequence at $k=0$.  \Cref{tab:notation} in \cref{app:notation}
collects every symbol used in the paper.

\subsection{Scope, Assumptions, and Guarantees}\label{sec:scope}

Game-based guidance tends to see its guarantees weaken as the model is
made realistic, so \cref{tab:assumptions} states what each part of the
framework does and does not guarantee.  The strongest claims, the
escape and security certificates, rest only on the top rows, which are
exactly the assumptions the linearization can support.

\begin{table}[htbp]
\centering\small
\renewcommand{\arraystretch}{1.25}
\caption{Assumptions and the guarantee each one supports.  Section
  numbers indicate where each item is established or used.}
\label{tab:assumptions}
\begin{tabular}{@{}p{0.34\textwidth}p{0.57\textwidth}@{}}
\toprule
Assumption & Status / scope of guarantee \\
\midrule
Planar HCW motion at LEO over $300$--$600$~s horizons & Linearization error below $0.9\%$ of the capture radius against nonlinear ECI$+J_2$ over the tested regime (\cref{sec:nonlinear_validation}); the planar reduction and horizons to $1200$~s are checked on the six-state model in \cref{sec:scope_checks} \\
Linear time-varying Yamanaka--Ankersen dynamics, $e \in [0, 0.6]$ & Reachable-set and certificate constructions preserved verbatim; validated against nonlinear Keplerian$+J_2$ propagation (\cref{sec:ltv_game,sec:elliptical}) \\
Continuous thrust, $\ell_\infty$-bounded (box), no illumination, communication or sensing constraints & Makes the reachable-set support function closed-form and projection element-wise (\cref{sec:support,sec:extragradient}); which operational constraints preserve that structure is set out in \cref{sec:limitations} \\
Terminal-distance payoff, box controls & \emph{Exact} escape certificate and pursuer security bound from support functions (\cref{thm:escape,def:security}); these hold for any admissible opponent \\
Effort regularization $\lambda > 0$ & Strict pursuer-side convexity; the payoff is \emph{not} concave in the evader, so no pure open-loop saddle is claimed (\cref{prop:curvature}) \\
Projected extragradient solution & Fast equilibrium-seeking; solution quality certified per run by best-response gaps, not by a convergence theorem in this regime (\cref{sec:extragradient,sec:case_a}) \\
Multi-pursuer via weighted surrogate & Heuristic coordination; the rigorous statement is the joint escape certificate (\cref{sec:multipursuer,sec:case_c}) \\
Keep-out avoidance via successive convexification & Exact constraint satisfaction at convergence; each convex subproblem is well posed (\cref{sec:obstacle,sec:case_b}) \\
Receding-horizon closed-loop play & Empirical; no recursive-feasibility proof, but per-step feasibility is checkable from the reachable-set screen (\cref{sec:tradespace}) \\
Perfect state knowledge, deterministic dynamics & Modeling assumption; navigation-error sensitivity measured in \cref{sec:solver_char}, further relaxations in \cref{sec:discussion} \\
\bottomrule
\end{tabular}
\end{table}

\section{Reachable Set Interpretation}\label{sec:reachable}

\subsection{Support-Function Propagation of Forward Reachable Sets}\label{sec:support}

The $N$-step forward reachable set under \cref{eq:discrete} with
control set $\ctrlset$ admits the Minkowski decomposition
\begin{equation}\label{eq:reach_mink}
\reachset_N(\state_0) = A_d^N \state_0 \oplus
\bigoplus_{k=0}^{N-1} A_d^{N-1-k} B_d \ctrlset.
\end{equation}
Defining the support function
$\support_{\mathcal{S}}(\bm{d}) \triangleq \max_{\state \in \mathcal{S}} \bm{d}^\top \state$
and using its standard linear-map and Minkowski
properties~\cite{Althoff2021},
\begin{equation}\label{eq:sf_reach}
\support_{\reachset_N}(\bm{d}) =
\bm{d}^\top A_d^N \state_0
+ \sum_{k=0}^{N-1} \support_{\ctrlset}\!\left( B_d^\top (A_d^\top)^{N-1-k} \bm{d} \right).
\end{equation}
For the box control set $\ctrlset = \{\ctrl : \norm{\ctrl}_\infty \leq \bar{u}\}$
the support function is $\bar{u} \norm{\bm{d}}_1$, which evaluates in
$O(n_u)$ time.  The position-space reachable set is then
outer-approximated by evaluating \cref{eq:sf_reach} along $L$ template
directions $\{\bm{d}_\ell\}$ uniformly distributed on the unit circle:
\begin{equation}\label{eq:polytope}
\hat{\reachset}_N \triangleq
\bigcap_{\ell=1}^{L} \bigl\{\pos \in \R^{n_p} :
\bm{d}_\ell^\top \pos \leq \support_{\reachset_N}(\bm{d}_\ell)\bigr\}
\supseteq \reachset_N^{\mathrm{pos}}.
\end{equation}
Sweeping $L \in \{8, \ldots, 128\}$, the excess area ratio
$\alpha_L = (V_L - V^*)/V^*$ relative to a fine reference ($L = 1000$)
decreases as $O(1/L^2)$; at $L = 48$, $\alpha_L \approx 0.01$.  Two
templates appear in what follows: $L = 48$ where a set is drawn or
screened, and $L = 96$ for the certificates of
\cref{sec:capture_cert,sec:security}, where the extra directions cost
microseconds and tighten the bound.

\subsection{An Exact Escape Certificate}\label{sec:capture_cert}

The support functions of \cref{eq:sf_reach} are exact for box
controls, and this is enough to certify, not merely estimate, that a
target cannot be caught.  Define, over any finite set of unit
directions $\{\bm{d}_\ell\}_{\ell=1}^{L}$,
\begin{equation}\label{eq:feas_cond}
\phi_N^{(i)} \triangleq \min_{\ell \in \{1,\ldots,L\}} \left[
\support_{\reachset_N^{(P_i)}}(\bm{d}_\ell)
- \support_{\reachset_N^{(E)}}(\bm{d}_\ell)
+ r_{\mathrm{cap}} \right],
\end{equation}
where $\support_{\reachset_N^{(j)}}$ is the exact terminal-position
support function of agent $j$, and let $\bm{d}^*$ attain the minimum.

\begin{theorem}[Escape certificate]\label{thm:escape}
If $\phi_N^{(i)} < 0$, the evader has an admissible open-loop control
that keeps
$\norm{\pos_N^{(P_i)} - \pos_N^{(E)}}_2 \geq r_{\mathrm{cap}} - \phi_N^{(i)}
> r_{\mathrm{cap}}$
against \emph{every} admissible control of pursuer $i$.  Escape from
pursuer $i$ is therefore guaranteed, with margin $-\phi_N^{(i)}$.
\end{theorem}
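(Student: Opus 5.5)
The plan is a separating-hyperplane argument along the minimizing direction $\bm{d}^*$. The key point is that the evader can commit to a single open-loop control that pushes its terminal position as far as possible along $\bm{d}^*$, without knowing anything about the pursuer's control. No pursuer control can then push $\pos_N^{(P_i)}$ further along $\bm{d}^*$ than its own support value. The gap between the two support values, projected onto a unit vector, lower-bounds the Euclidean separation.

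\emph{Step 1: the evader's control.} Write the terminal position as $\pos_N = C\state_N$, where $C$ selects the position components. The position support function in direction $\bm{d}$ is then \cref{eq:sf_reach} evaluated at $C^\top\bm{d}$. The map from the control sequence to $\pos_N$ is affine by \cref{eq:reach_mink}, and the admissible control set is a compact product of boxes. Hence the maximum defining $\support_{\reachset_N^{(E)}}(\bm{d}^*)$ is attained. For box controls it is attained explicitly by the bang--bang choice
\[
\ctrl_k^{(E)} = \bar{u}_E \operatorname{sign}\!\bigl(B_d^\top (A_d^\top)^{N-1-k} C^\top \bm{d}^*\bigr),
\]
which gives
\[
\bm{d}^{*\top}\pos_N^{(E)} = \support_{\reachset_N^{(E)}}(\bm{d}^*).
\]
This control depends only on $\bm{d}^*$ and the initial states, so it is a legitimate open-loop commitment.

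\emph{Step 2: any pursuer control.} Take an arbitrary admissible $\ctrl^{(P_i)}$. Its terminal position lies in $\reachset_N^{(P_i),\mathrm{pos}}$, so by the definition of the support function
\[
\bm{d}^{*\top}\pos_N^{(P_i)} \le \support_{\reachset_N^{(P_i)}}(\bm{d}^*).
\]
Subtracting this from the identity in Step~1 and using the definition \cref{eq:feas_cond} with $\bm{d}^*$ attaining the minimum gives
\[
\bm{d}^{*\top}\bigl(\pos_N^{(E)}-\pos_N^{(P_i)}\bigr) \ge \support_{\reachset_N^{(E)}}(\bm{d}^*)-\support_{\reachset_N^{(P_i)}}(\bm{d}^*) = r_{\mathrm{cap}}-\phi_N^{(i)}.
\]

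\emph{Step 3: pass to the norm.} Since $\norm{\bm{d}^*}_2=1$, Cauchy--Schwarz gives
\[
\norm{\pos_N^{(P_i)}-\pos_N^{(E)}}_2 \ge \bm{d}^{*\top}\bigl(\pos_N^{(E)}-\pos_N^{(P_i)}\bigr) \ge r_{\mathrm{cap}}-\phi_N^{(i)},
\]
and $\phi_N^{(i)}<0$ makes the right-hand side strictly greater than $r_{\mathrm{cap}}$.

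There is no real analytic obstacle; the step needing most care is Step~1. There one must use that the support functions are \emph{exact}, so the evader actually attains $\support_{\reachset_N^{(E)}}(\bm{d}^*)$ and does not merely approach an over-approximation. One must also check that the quantifier order is correct: the evader fixes its control first, and the bound in Step~2 holds uniformly over pursuer controls. The result uses only the single direction $\bm{d}^*$, so it holds for any finite template set and does not require the template polytope \cref{eq:polytope} to be tight.
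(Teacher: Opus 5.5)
Your proof is correct and follows the paper's argument step for step. The evader plays the bang--bang control attaining its support value along $\bm{d}^*$, every pursuer terminal position obeys the support-function bound in that direction, and Cauchy--Schwarz with the unit vector $\bm{d}^*$ converts the gap to a distance. Your position selector $C^\top$ in the sign formula is a small tidying of the paper's expression, which writes the position-space direction $\bm{d}^*$ where a state-space vector is needed.
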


\begin{proof}
Let the evader play the bang-bang control that attains the support
value in direction $\bm{d}^*$, so that
$\bm{d}^{*\top}\pos_N^{(E)} = \support_{\reachset_N^{(E)}}(\bm{d}^*)$;
for box controls this is
$\ctrl_k^{(E)} = \bar{u}_E\,\mathrm{sign}(B_d^\top (A_d^\top)^{N-1-k}\bm{d}^*)$.
Any admissible pursuer terminal position obeys
$\bm{d}^{*\top}\pos_N^{(P_i)} \leq \support_{\reachset_N^{(P_i)}}(\bm{d}^*)$.
By Cauchy--Schwarz,
$\norm{\pos_N^{(E)}-\pos_N^{(P_i)}}_2 \geq
\bm{d}^{*\top}(\pos_N^{(E)}-\pos_N^{(P_i)}) \geq
\support_{\reachset_N^{(E)}}(\bm{d}^*)
-\support_{\reachset_N^{(P_i)}}(\bm{d}^*)
= r_{\mathrm{cap}} - \phi_N^{(i)}$.
\end{proof}

\begin{remark}[Existence of capture versus success of capture]\label{rem:existence}
Ask instead whether some admissible pair of plans ends close together.
That is a joint question about the two reachable sets: writing
$\mathcal{D}_N \triangleq \reachset_N^{(P)} \oplus
(-\reachset_N^{(E)})$ for their \emph{difference body}, the Minkowski
sum of $\reachset_N^{(P)}$ and $-\reachset_N^{(E)}$ and not the
erosion $\ominus$, a pair of
admissible controls with terminal separation at most
$r_{\mathrm{cap}}$ exists if and only if $\mathcal{D}_N$ meets the
capture ball of radius $r_{\mathrm{cap}}$, which for convex compact
sets is decided by
$\psi_N \triangleq \min_{\norm{\bm{d}}_2 = 1}
[\support_{\reachset_N^{(P)}}(\bm{d}) +
\support_{\reachset_N^{(E)}}(-\bm{d})] + r_{\mathrm{cap}} \geq 0$.
Note the plus sign: $\psi_N$ is not $\phi_N$, and since each terminal
set is a free response plus a zonotope centred on it,
$\phi_N \leq \psi_N$ always.  Non-existence of a capture pair therefore
implies certified escape, but not conversely, and it is the converse
that matters operationally.  In Case~A of \cref{sec:case_a} the two
terminal sets in fact \emph{intersect}: the minimum separation over all
admissible pairs, obtained exactly from a small convex program, is
$0$~\si{\meter}, so capture pairs exist in abundance.  The target
nevertheless guarantees $72.3$~\si{\meter} of standoff, because the
inspector must commit to its plan without knowing the target's.
A Pontryagin difference, which encodes the stronger requirement that one
set contain another, is likewise not the object this question needs.
\end{remark}

Two points make $\phi_N$ a genuine certificate rather than a screen.
First, it delivers not only the sign of the outcome but a
\emph{constructive} evader strategy and a guaranteed standoff.  Second,
direction sampling can only make it conservative: the minimum over a
finite $\{\bm{d}_\ell\}$ is at least the minimum over the whole circle,
so a sampled $\phi_N < 0$ is valid a fortiori, and refining $L$ can
only tighten the margin.  Against multiple pursuers, escape from all of
them simultaneously requires a single direction that clears every
pursuer at once,
\begin{equation}\label{eq:phi_joint}
\phi_N^{\mathrm{joint}} \triangleq \min_{\ell}\;\max_i\left[
\support_{\reachset_N^{(P_i)}}(\bm{d}_\ell)
- \support_{\reachset_N^{(E)}}(\bm{d}_\ell)\right] + r_{\mathrm{cap}},
\end{equation}
so $\phi_N^{\mathrm{joint}} < 0$ certifies joint escape while each
$\phi_N^{(i)} < 0$ certifies escape only from pursuer $i$ in isolation.
The gap between the two is exactly how encirclement helps the pursuers,
made quantitative in \cref{sec:case_c}.  When $\phi_N \geq 0$ the
certificate is silent and the full game must be solved;
\cref{fig:feasibility} evaluates $\phi_N$ on Case~A across thrust
ratios, and \cref{sec:solver_char} reports that over $200$ perturbed
trials the sign of $\phi_N$ predicts the game outcome without a single
error.

\section{Game Formulation and Solver}\label{sec:game}

\subsection{The Terminal-Distance Game and Its Curvature}\label{sec:saddle_form}

The linearity of \cref{eq:dynamics_agent} makes each agent's state
trajectory an affine function of its control sequence.  Stacking
controls as $\bm{U}^{(j)} \in \R^{Nn_u}$ and using the
block-lower-triangular control-to-state matrix $S$ with blocks
$S_{k,j} = A_d^{k-j-1} B_d$ for $j<k$,
\begin{equation}\label{eq:traj_affine}
\bm{X}^{(j)} = \bm{f}^{(j)} + S \bm{U}^{(j)},
\end{equation}
where $\bm{f}^{(j)}$ is the free response.  Writing $T_N$ for the
terminal-position extraction matrix, so that $T_N\bm{X}^{(j)} =
\pos_N^{(j)}$, the terminal relative position is
\begin{equation}\label{eq:delta}
\bm{\delta} = G_P \bm{U}^{(P)} - G_E \bm{U}^{(E)} + \bm{g}_0,
\end{equation}
with $G_P = G_E = T_N S$ and $\bm{g}_0 = T_N(\bm{f}^{(P)} - \bm{f}^{(E)})$.

The game payoff is the squared terminal distance with a small symmetric
effort regularization,
\begin{equation}\label{eq:payoff}
J(\bm{U}^{(P)}, \bm{U}^{(E)}) =
\norm{\bm{\delta}}_2^2
+ \lambda \norm{\bm{U}^{(P)}}_2^2
- \lambda \norm{\bm{U}^{(E)}}_2^2,
\end{equation}
with $\lambda = 10^{-3}$ throughout, and the open-loop problem is
\begin{equation}\label{eq:saddle}
\min_{\bm{U}^{(P)} \in \ctrlset_P^N} \max_{\bm{U}^{(E)} \in \ctrlset_E^N}
J(\bm{U}^{(P)}, \bm{U}^{(E)}).
\end{equation}
It is tempting to call \cref{eq:saddle} a convex--concave game and
invoke a minimax theorem, but that is only half true, and the
distinction matters for the rest of the paper.

\begin{proposition}[Curvature of the payoff]\label{prop:curvature}
For every $\lambda > 0$ the payoff $J$ is strictly convex in
$\bm{U}^{(P)}$.  It is concave in $\bm{U}^{(E)}$ if and only if
$\lambda \geq \sigma_{\max}(G_E)^2$, where $\sigma_{\max}(G_E)$ is the
largest singular value of $G_E$.  The same threshold governs
monotonicity of the game operator $F$ below.
\end{proposition}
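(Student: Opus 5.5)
Because $J$ in \cref{eq:payoff} is a quadratic function of the stacked controls, the plan is to compute its Hessian blocks explicitly and read off the curvature claims from them. By \cref{eq:delta}, $\bm{\delta}$ is affine in $(\bm{U}^{(P)},\bm{U}^{(E)})$ with $\partial\bm{\delta}/\partial\bm{U}^{(P)} = G_P$ and $\partial\bm{\delta}/\partial\bm{U}^{(E)} = -G_E$. The second derivatives of $J$ are therefore constant:
\begin{equation*}
\nabla^2_{PP} J = 2\bigl(G_P^\top G_P + \lambda I\bigr),\qquad
\nabla^2_{EE} J = 2\bigl(G_E^\top G_E - \lambda I\bigr),\qquad
\nabla^2_{PE} J = -2\,G_P^\top G_E .
\end{equation*}
The first block is at least $2\lambda I \succ 0$ for every $\lambda>0$. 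Hence $J$ is strictly (indeed $2\lambda$-strongly) convex in $\bm{U}^{(P)}$ for each fixed $\bm{U}^{(E)}$.

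For the evader side, a quadratic is concave exactly when its constant Hessian is negative semidefinite. So the condition is $G_E^\top G_E \preceq \lambda I$, which holds precisely when $\lambda_{\max}(G_E^\top G_E) = \sigma_{\max}(G_E)^2 \le \lambda$. Both directions follow from the Rayleigh quotient. For the "only if" direction, take $\bm{v}$ to be a top right singular vector of $G_E$. Then $\bm{v}^\top \nabla^2_{EE} J\,\bm{v} = 2(\sigma_{\max}^2-\lambda)\norm{\bm{v}}^2 > 0$ whenever $\lambda<\sigma_{\max}^2$, so $J(\bm{U}^{(P)},\cdot)$ is strictly convex along the line through any point in direction $\bm{v}$. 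I would also note that the box $\ctrlset_E^N$ has nonempty interior, so this convex direction is realized within the feasible set and not only on all of $\R^{Nn_u}$.

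For the monotonicity claim, I would define the game operator in the usual way as $F(\bm{U}) = \bigl(\nabla_{P} J,\, -\nabla_{E} J\bigr)$, anticipating its definition in \cref{sec:extragradient}. Its Jacobian is the constant matrix
\begin{equation*}
DF = 2\begin{bmatrix} G_P^\top G_P + \lambda I & -G_P^\top G_E \\ G_E^\top G_P & \lambda I - G_E^\top G_E \end{bmatrix}.
\end{equation*}
An affine operator is monotone if and only if the symmetric part of its Jacobian is positive semidefinite. The off-diagonal blocks are negatives of each other's transposes, so they cancel in the symmetric part, which is $\mathrm{blkdiag}\bigl(2(G_P^\top G_P+\lambda I),\,2(\lambda I - G_E^\top G_E)\bigr)$. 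This is positive semidefinite exactly when $\lambda \ge \sigma_{\max}(G_E)^2$, which is the same threshold as for concavity.

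The only delicate step is this cancellation of the off-diagonal blocks. It depends on the sign convention for $F$, so I would fix the convention explicitly to match the extragradient section before asserting "the same threshold". Everything else is routine linear algebra.
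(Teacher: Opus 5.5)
Your proposal is correct and follows the paper's proof essentially line for line: the same constant Hessian blocks $2(G_P^\top G_P+\lambda I)$ and $2(G_E^\top G_E-\lambda I)$, and the same observation that the bilinear cross terms of $F$ cancel in its symmetric part, leaving $\mathrm{blkdiag}(2(G_P^\top G_P+\lambda I),\,2(\lambda I-G_E^\top G_E))$. Your extra remarks are small refinements that the paper leaves implicit: the top right singular vector for the ``only if'' direction, and the nonempty interior of the box, which makes the curvature and monotonicity conclusions hold on the feasible set and not only on all of $\R^{Nn_u}$.
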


\begin{proof}
The Hessians are
$\nabla^2_{\bm{U}^{(P)}} J = 2(G_P^\top G_P + \lambda I) \succ 0$ and
$\nabla^2_{\bm{U}^{(E)}} J = 2(G_E^\top G_E - \lambda I)$, and the
latter is negative semidefinite exactly when $\lambda I \succeq
G_E^\top G_E$, i.e.\ $\lambda \geq \sigma_{\max}(G_E)^2$.  The
bilinear cross terms of $F$ cancel in its symmetric part, which is
$\mathrm{blkdiag}(2(G_P^\top G_P + \lambda I),\,2(\lambda I - G_E^\top
G_E))$; monotonicity is equivalent to this being positive
semidefinite, the same condition.
\end{proof}

For the engagement parameters used here $\sigma_{\max}(G_E)^2 \approx
9.4\times 10^{7}$ in the units of \cref{eq:payoff}, eleven orders of
magnitude above $\lambda = 10^{-3}$.  Any $\lambda$ large enough to
restore concavity would also swamp the terminal-distance objective
with the effort term and change the game being played.  In the natural
parameterization, then, the terminal-distance game is convex in
\emph{both} players: the evader maximizes a convex function of its
terminal position over its reachable set, so its optimal reply is a
bang-bang extreme point and a pure open-loop saddle need not exist.
The rest of the paper is built around that fact.

The value $\lambda = 10^{-3}$ is therefore chosen as a regularizer, not
as a tuning knob, and the outcome is insensitive to it.  Sweeping
$\lambda$ over twelve values from $10^{-4}$ to $10^{7}$ leaves Case~A
unchanged at $d_f = 72.63$~\si{\meter} and
$\Delta V_P = 4.16$~\si{\meter\per\second} across the whole range
$[10^{-4}, 10^{4}]$, because both players saturate their thrust bounds
($96.7\%$ of the horizon) and the effort term merely shifts the
objective by a constant.  The first departure exceeding $1\%$ appears
only near $\lambda^\star \approx 7 \times 10^{5}$, nearly nine orders
above the
operative value: by $\lambda = 10^{7}$ the penalty desaturates the
thrust and $d_f$ rises to $109.6$~\si{\meter}.  A positive $\lambda$ is
kept because it makes the pursuer subproblem strongly convex and the
projection well posed, so the best response is unique; the full sweep
is in \cref{app:ablation}.

The gradients
\begin{align}
\nabla_{\bm{U}^{(P)}} J &= 2 G_P^\top \bm{\delta} + 2\lambda \bm{U}^{(P)},
  \label{eq:grad_p} \\
\nabla_{\bm{U}^{(E)}} J &= -2 G_E^\top \bm{\delta} - 2\lambda \bm{U}^{(E)},
  \label{eq:grad_e}
\end{align}
define the game operator
$F = [\nabla_{\bm{U}^{(P)}} J,\; -\nabla_{\bm{U}^{(E)}} J]^\top$, which
is Lipschitz continuous with constant
$L_F = 2(\norm{G}_2^2 + \lambda)$, $\norm{G}_2 = \max\{\norm{G_P}_2,
\norm{G_E}_2\}$.

\subsection{Two Exact One-Sided Guarantees}\label{sec:security}

In place of an equilibrium, each player is furnished with a worst-case
guarantee that holds regardless of the opponent.  The evader's is the escape
certificate of \cref{thm:escape}: when $\phi_N < 0$ it can force a
terminal separation of at least $r_{\mathrm{cap}} - \phi_N$.  The
pursuer's is a security strategy.

\begin{definition}[Pursuer security strategy]\label{def:security}
Let $\hat{\reachset}_N^{(E)} \supseteq \reachset_N^{(E)}$ be the outer
polytope \cref{eq:polytope} of the evader's terminal reachable set,
with vertices $\{\bm{v}_j\}_{j=1}^{L}$.  The pursuer security value is
\begin{equation}\label{eq:security}
\bar{V} = \min_{\bm{U}^{(P)} \in \ctrlset_P^N}\; \max_{j}\;
\norm{\pos_N^{(P)}(\bm{U}^{(P)}) - \bm{v}_j}_2,
\end{equation}
attained by the security control $\bm{U}^{(P)}_{\mathrm{sec}}$.
\end{definition}

Since the polytope contains the true reachable set, playing
$\bm{U}^{(P)}_{\mathrm{sec}}$ guarantees a terminal miss distance of at
most $\bar{V}$ against every admissible evader, and $\bar{V} \leq
r_{\mathrm{cap}}$ certifies capture.  Problem \cref{eq:security} is a
second-order-cone program with $L$ cones, a one-time solve rather than
a game iteration.  The two guarantees bracket the engagement: when
$\phi_N < 0$, the outcome lies in $[\,r_{\mathrm{cap}} - \phi_N,\,
\bar{V}\,]$, a certified interval that assumes nothing about either
player's rationality or information.

\subsection{Extragradient Equilibrium-Seeking, Certified per Run}\label{sec:extragradient}

The certificates bound the outcome but do not by themselves produce a
strategy to fly.  For that we run the projected extragradient
method~\cite{Korpelevich1976} on \cref{eq:saddle}.  With
$\bm{z}_t = (\bm{U}_t^{(P)}, \bm{U}_t^{(E)})$, each iteration takes an
extrapolation step
\begin{equation}\label{eq:eg_extrap}
\hat{\bm{z}}_t = \Pi_{\mathcal{Z}}\bigl(\bm{z}_t - \eta\, F(\bm{z}_t)\bigr)
\end{equation}
and an update step
\begin{equation}\label{eq:eg_update}
\bm{z}_{t+1} = \Pi_{\mathcal{Z}}\bigl(\bm{z}_t - \eta\, F(\hat{\bm{z}}_t)\bigr),
\end{equation}
with $\mathcal{Z} = \ctrlset_P^N \times \ctrlset_E^N$ and step size
$\eta = \min(\eta_0, 0.5/(\norm{G}_2^2 + \lambda))$.  Because the
constraints are boxes, $\Pi_{\mathcal{Z}}$ is element-wise clipping at
$O(Nn_u)$ cost, and the evader block of $F$ carries a sign flip so its
update is a gradient ascent.  Each iteration is two gradient
evaluations and two projections, all matrix--vector work; no QP solver
is called, which is what makes it fast (\cref{sec:experiments}).

\begin{theorem}[Conditional convergence]\label{thm:convergence}
If $\lambda \geq \sigma_{\max}(G_E)^2$, so that $F$ is monotone
(\cref{prop:curvature}), then for $\eta \leq 1/(2L_F)$ the iterates
converge to the unique saddle point, geometrically with factor
$q = 1 - 2\lambda/(\norm{G}_2^2 + \lambda)$~\cite{Tseng1995,Mokhtari2020}.
\end{theorem}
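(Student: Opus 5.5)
The plan is to reduce the statement to the standard convergence theory of projected extragradient for Lipschitz, (strongly) monotone variational inequalities on a compact convex set, and to use \cref{prop:curvature} to supply the monotonicity.

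\textbf{Existence of a saddle point.} When $\lambda \geq \sigma_{\max}(G_E)^2$, \cref{prop:curvature} makes $J$ strictly convex in $\bm{U}^{(P)}$ and concave in $\bm{U}^{(E)}$. The sets $\ctrlset_P^N$ and $\ctrlset_E^N$ are compact convex boxes and $J$ is continuous. Sion's (or von Neumann's) minimax theorem therefore gives a saddle point $\bm{z}^\star$. Equivalently, $\bm{z}^\star$ solves the variational inequality $\langle F(\bm{z}^\star), \bm{z}-\bm{z}^\star\rangle \geq 0$ for all $\bm{z} \in \mathcal{Z}$.

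\textbf{Uniqueness and the monotonicity modulus.} Because $J$ is quadratic, $F$ is affine. Hence $\langle F(\bm{z})-F(\bm{z}'),\bm{z}-\bm{z}'\rangle$ equals the quadratic form of the symmetric part computed in \cref{prop:curvature}, namely $\mathrm{blkdiag}(2(G_P^\top G_P+\lambda I),\,2(\lambda I - G_E^\top G_E))$. This symmetric part is bounded below by $\mu I$ with $\mu = 2\min\{\lambda,\ \lambda-\sigma_{\max}(G_E)^2\} = 2(\lambda - \sigma_{\max}(G_E)^2)$. Two cases follow.
\begin{itemize}
\item If $\lambda > \sigma_{\max}(G_E)^2$, then $F$ is $\mu$-strongly monotone and the saddle point is unique: subtracting the two VI inequalities for two solutions gives $\mu\norm{\bm{z}^\star-\bm{z}^{\star\prime}}^2 \leq 0$.
\item At equality $\lambda = \sigma_{\max}(G_E)^2$, only the pursuer component is unique, by strict convexity. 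The evader component may be non-unique along $\ker(\lambda I - G_E^\top G_E)$.
\end{itemize}
In the equality case I would state convergence to \emph{a} saddle point. This is Korpelevich's theorem for monotone Lipschitz operators with $\eta < 1/L_F$, proved via Fejér monotonicity of $\norm{\bm{z}_t - \bm{z}^\star}$. I would flag that the word ``unique'' and the geometric rate require the strict inequality.

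\textbf{Geometric rate.} For the strict case I would reproduce the standard one-step extragradient inequality. Nonexpansiveness of $\Pi_{\mathcal{Z}}$, the projection characterization applied at $\hat{\bm{z}}_t$ and at $\bm{z}_{t+1}$, and Lipschitz continuity with $\eta L_F \leq 1/2$ give
\[
\norm{\bm{z}_{t+1}-\bm{z}^\star}^2 \leq \norm{\bm{z}_t-\bm{z}^\star}^2 - (1-\eta^2L_F^2)\norm{\hat{\bm{z}}_t-\bm{z}_t}^2 - 2\eta\mu\norm{\hat{\bm{z}}_t-\bm{z}^\star}^2 .
\]
Combining this with $\norm{\bm{z}_t-\bm{z}^\star}^2 \leq 2\norm{\bm{z}_t-\hat{\bm{z}}_t}^2 + 2\norm{\hat{\bm{z}}_t-\bm{z}^\star}^2$ yields a contraction $\norm{\bm{z}_{t+1}-\bm{z}^\star}^2 \leq (1-c\,\eta\mu)\norm{\bm{z}_t-\bm{z}^\star}^2$ for an absolute constant $c$. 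This is the form given in \cite{Tseng1995,Mokhtari2020}.

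\textbf{Main obstacle.} The hardest step is matching this contraction to the stated factor $q = 1-2\lambda/(\norm{G}_2^2+\lambda)$. The natural modulus is $\mu = 2(\lambda-\sigma_{\max}(G_E)^2)$, not $2\lambda$, and with $\eta \leq 1/(2L_F)$ the generic rate is of order $1-\mu/(8L_F)$. I would first try a block-weighted norm that exploits the pursuer block's modulus $2\lambda$. If that fails, I would prove the theorem with $q = 1 - c(\lambda-\sigma_{\max}(G_E)^2)/(\norm{G}_2^2+\lambda)$ and remark that the displayed $q$ is an optimistic reading of the cited bounds. For the paper's operating point, $\lambda=10^{-3} \ll 9.4\times10^7$, the hypothesis fails in any case, so the theorem is only vacuously relevant there. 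This is consistent with the paper's reliance on per-run best-response gaps rather than on this convergence result.
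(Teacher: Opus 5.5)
Your route is the paper's route. The paper gives no argument of its own for \cref{thm:convergence}; it only points to the standard extragradient Lyapunov analysis for strongly monotone Lipschitz operators \cite{Tseng1995,Facchinei2003,Mokhtari2020}. Your outline (existence via Sion, uniqueness via strong monotonicity, one-step contraction) is exactly that analysis.

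Your reservations about the statement are also correct, and they can be made sharper. Here $G_P=G_E$, so $\norm{G}_2=\sigma_{\max}(G_E)$. The hypothesis $\lambda\ge\sigma_{\max}(G_E)^2$ then forces $\norm{G}_2^2+\lambda\le 2\lambda$, hence $q=1-2\lambda/(\norm{G}_2^2+\lambda)\le 0$. So the displayed factor asserts one-step convergence everywhere its hypothesis holds. That is false. Take $\bm{g}_0=\bm{0}$: the saddle point is the interior point $\bm{0}$. Near it, one extragradient step maps the error by $I-\eta DF+\eta^2 DF^2$, where $DF$ is the constant Jacobian of $F$, and this matrix is invertible whenever $\eta\norm{DF}_2<1$. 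The displayed $q$ is therefore not an optimistic reading of the cited bounds but an impossible one, and the block-weighted-norm attempt cannot succeed. Go straight to your fallback with modulus $2(\lambda-\sigma_{\max}(G_E)^2)$. The paper's own evaluation $q\approx 1-2\times10^{-11}$ is made at $\lambda=10^{-3}$, outside the hypothesis. That fits the formula having been derived from the pursuer-block modulus $2\lambda$ while ignoring the evader block.

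There are two smaller corrections.

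\emph{The Lipschitz constant.} $L_F=2(\norm{G}_2^2+\lambda)$, which you take from the paper, is not a Lipschitz constant of $F$. With $G_P=G_E=G$ one has $DF=2(\lambda I+N\otimes G^\top G)$, where $N=\bigl[\begin{smallmatrix}1&-1\\1&-1\end{smallmatrix}\bigr]$. This gives $\norm{DF}_2=2\bigl(\sigma_{\max}(G)^2+\sqrt{\lambda^2+\sigma_{\max}(G)^4}\bigr)>L_F$. Under the hypothesis the excess is at most a factor $(1+\sqrt{2})/2$. So $\eta\le 1/(2L_F)$ still gives $\eta\norm{DF}_2\le(1+\sqrt{2})/4<1$. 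Your one-step inequality therefore survives with $\norm{DF}_2$ in place of $L_F$ and a different absolute constant $c$.

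\emph{The equality case.} At $\lambda=\sigma_{\max}(G_E)^2$ you do lose uniqueness of the evader component and any explicit factor. You do not necessarily lose a geometric rate. $F$ is affine and $\mathcal{Z}$ is a box, and for monotone affine variational inequalities over polyhedra the error-bound analysis of \cite{Tseng1995} still gives R-linear convergence of extragradient to some point of the solution set.
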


The result is the standard one for a monotone operator and we do not
reproduce its Lyapunov argument~\cite{Tseng1995,Facchinei2003,Mokhtari2020},
because its hypothesis does \emph{not} hold in the operative regime
($\lambda = 10^{-3}$ against $\sigma_{\max}(G_E)^2 \approx 9.4 \times
10^{7}$).  The margin is wide enough to make the rate vacuous,
$q \approx 1 - 2 \times 10^{-11}$, so the observed termination at $141$
iterations is set by the relative-value tolerance
$\varepsilon_{\mathrm{SP}} = 10^{-4}$ and not by any contraction.
\Cref{thm:convergence} therefore does not certify the runs in this
paper, and we certify each solution after the fact instead.  Let
$(\bm{U}^{(P)\circ}, \bm{U}^{(E)\circ})$ be the returned iterate.  Its
\emph{pursuer-side best-response gap} is the amount by which the
pursuer could improve the terminal distance against the returned
evader,
\begin{equation}\label{eq:br_gap}
\gamma_P = \norm{\bm{\delta}(\bm{U}^{(P)\circ}, \bm{U}^{(E)\circ})}_2
- \min_{\bm{U}^{(P)} \in \ctrlset_P^N}
\norm{\bm{\delta}(\bm{U}^{(P)}, \bm{U}^{(E)\circ})}_2,
\end{equation}
a convex program.  A small $\gamma_P$ certifies that the pursuer plays
near-optimally against the evader the solver produced.  Across the
engagements of \cref{sec:case_a,sec:case_b,sec:case_c} we find
$\gamma_P \leq 2.9$~\si{\meter}, and under half a metre in the two
one-on-one cases; where a stopping tolerance tuned at one horizon is
carried to a much longer one the gap grows to tens of metres, which
\cref{sec:scope_checks} reports rather than hides, since exposing
exactly that is what a per-run check is for.
The evader-side gap is generally large: as \cref{prop:curvature}
predicts, the evader could unilaterally do much better against a
\emph{fixed} pursuer point.  The returned iterate is therefore a
strong pursuer strategy together with a plausible evader reply, and it
coincides with the iterated-best-response outcome to within $0.5\%$
(\cref{sec:case_a}).

\subsection{Multiple Pursuers}\label{sec:multipursuer}

With $n_P > 1$ pursuers the inner $\min_i$ in \cref{eq:G1} is
non-smooth, and the exact convex handling is an epigraph reformulation
with a scalar slack for the inner minimum.  We instead use the
smoother and cheaper weighted surrogate
\begin{equation}\label{eq:multi_payoff}
J_{\mathrm{multi}} = \sum_i w_i\, \norm{\pos_N^{(P_i)} - \pos_N^{(E)}}_2^2
+ \lambda \sum_i \norm{\bm{U}^{(P_i)}}_2^2
- \lambda \norm{\bm{U}^{(E)}}_2^2,
\end{equation}
with adaptive weights $w_i \propto \exp(-\norm{\bm{\delta}_i}_2^2 /
\min_j \norm{\bm{\delta}_j}_2^2)$ that concentrate effort on the
pursuer with the most favorable geometry.  This is a coordination
heuristic and we do not claim it solves \cref{eq:G1}.  What we
\emph{can} state rigorously about the multi-pursuer engagement is the
joint escape certificate \cref{eq:phi_joint}: whenever
$\phi_N^{\mathrm{joint}} \geq 0$, no single evader direction clears
every pursuer, and the target cannot guarantee escape even though it
may escape any one pursuer alone.  \Cref{sec:case_c} exhibits exactly
this: two pursuers each individually escapable
($\phi_N^{(i)} < 0$) but jointly inescapable
($\phi_N^{\mathrm{joint}} > 0$).

\subsection{Keep-Out Zones by Successive Convexification}\label{sec:obstacle}

Keep-out balls $\norm{\pos_k - \pos_m^{\mathrm{obs}}}_2 \geq r_m$ are
nonconvex, and adding a raw quadratic penalty to $F$ destroys the
reachable-set geometry the method depends on: the penalty gradient at
the extragradient midpoint can dominate the pursuit gradient and drive
the iteration to a non-equilibrium point (we return to this failure in
\cref{sec:case_b}).  We instead convexify successively.  At each outer
iteration the previous trajectory supplies, for each active step $k$
and obstacle $m$, a supporting half-space with outward normal
$\bm{\eta}_{km} = (\pos_k^{\mathrm{prev}} - \pos_m^{\mathrm{obs}}) /
\norm{\pos_k^{\mathrm{prev}} - \pos_m^{\mathrm{obs}}}$, and the penalty
is applied to the \emph{affine} margin,
\begin{equation}\label{eq:scp_penalty}
P(\bm{U}) = \kappa \sum_{k,m}
\max\!\bigl(0,\; r_m - \bm{\eta}_{km}^\top(\pos_k - \pos_m^{\mathrm{obs}})\bigr)^2 .
\end{equation}
This term is convex in each player's own controls, so the inner game
retains the structure of \cref{sec:extragradient}; a few
relinearizations, followed by one hard-constrained polishing solve,
drive the keep-out violation to zero.  A penalty weight $\kappa = 10^3$
gives exact feasibility in every scenario tested, with the violation
decreasing as $O(1/\kappa)$ before the polish (\cref{sec:case_b}).

Two alternatives deserve mention, since keep-out avoidance is the one
genuinely nonconvex ingredient in the paper.  A mixed-integer
formulation encodes the disjunction exactly and returns a global
optimum~\cite{Richards2002}, at a branch-and-bound cost that grows with
the number of zone--step pairs and rules out a solve at the timescale
sought here.  Control barrier functions give forward invariance of the
safe set and compose cleanly with quadratic-program
controllers~\cite{Breeden2023RobustCBF,vanWijk2024DCBF}, but they
presuppose a feedback law and a valid barrier certificate for the
two-player dynamics, neither of which an open-loop game supplies.  We
choose successive convexification because it is the alternative that
preserves what the rest of the paper rests on: every subproblem stays
convex in each player's own controls, so the projection stays
element-wise, and the certificates, computed on the box and indifferent
to the zones, stay exact.

\section{Numerical Experiments}\label{sec:experiments}

All circular-orbit experiments use the planar 4D HCW model with
box-bounded controls at LEO altitude 500~km
($n = 1.131 \times 10^{-3}$~rad/s), timestep $\Delta t = 10$~s,
horizon $N = 30$ ($300$~s), thrust bounds $\bar{u}_P = 10$~mm/s$^2$
and $\bar{u}_E = 5$~mm/s$^2$ (2:1 ratio), and capture radius
$r_{\mathrm{cap}} = 50$~m.  Full parameters are in \cref{app:params}.

\subsection{Setup and Baselines}\label{sec:setup}

Two baselines accompany the extragradient solver
(\cref{sec:extragradient}).  \emph{Nominal MPC} assumes a passive
target ($\ctrl^{(E)} \equiv 0$) and solves the single-agent QP via
CVXPY+OSQP~\cite{CVXPY2016,OSQP2020}; its miss distance is a lower
bound that measures the cost of assuming passivity.  \emph{Iterative
best response} (IBR) alternates pursuer and evader QPs; because the
evader subproblem is linearized, IBR solves a related but distinct
game, so its agreement with the extragradient value is a
cross-check, not a proof.  Pseudocode is \cref{alg:ibr} in
\cref{app:algorithms}.

Neither is a competitor.  That nominal MPC captures
a passive target is a foregone conclusion; its role is to price the
passivity assumption, in metres and in $\Delta v$
(\cref{tab:case_a,sec:solver_char}), which is the argument for modeling
the engagement as a game at all.  IBR is an independent
implementation reaching the same values through different
subproblems, which is what makes an agreement informative.

\subsection{Case A: One-on-One Open-Loop Game}\label{sec:case_a}

The inspector starts at $\state_0^{(P)} = [200, -300, 0, 0]^\top$~m
and the target at the origin.  \Cref{tab:case_a} collects the results.
Against a passive target the inspector closes completely; nominal MPC
reaches the capture radius at $t = 280$~\si{\second}
($d = 40.9$~\si{\meter} at first passage).  Once the target thrusts to
escape, both game solvers finish at $d_f \approx 72.5$~\si{\meter},
outside the $50$~\si{\meter} radius; the extragradient value
($72.63$~\si{\meter}) and IBR ($72.28$~\si{\meter}) agree to $0.5\%$,
and the pursuer-side best-response gap is $\gamma_P =
0.40$~\si{\meter}, so the returned pursuer strategy is certifiably
near-optimal against the returned evader.  The escape certificate is
consistent with this: $\phi_N = -22.3$~\si{\meter}, guaranteeing the
target a standoff of at least $72.3$~\si{\meter}, and the pursuer
security value is $\bar{V} = 319.7$~\si{\meter}, so the outcome is
bracketed in $[72.3, 319.7]$~\si{\meter}, comfortably outside capture.
\Cref{rem:existence} sets this against the weaker question of whether a
close pair of plans merely exists, which for this geometry it does.
The gap between the two bounds is wide because the security value
against an unconstrained evader reachable set is inevitably loose at a
single horizon; it is the escape certificate that decides this
engagement.  \Cref{fig:case_a_traj} shows the resulting trajectories:
both agents run along nearly the same line, down-radial and
along-track, with the target holding its lead to the final sample
instead of turning -- the certifying direction $\bm{d}^*$ of
\cref{thm:escape} is what it is running along.  Both game solutions are
bang-bang, as the
linear dynamics and terminal payoff require, and
\cref{fig:control_profile} shows it directly: each control component sits on its bound at $29$ of the $30$
steps ($96.7\%$ of components for both agents), the only interior values
being those of the final step, where the effort term $\lambda$ is the
only thing acting.  Neither plan changes sign, so the optimal open-loop
strategies are constant-direction saturated burns -- the same sign
pattern that attains the support value in the certificate proof of
\cref{thm:escape}.  The structure is a property of the unconstrained
terminal-distance game rather than of the solver: once a keep-out
constraint binds in Case~B below, the inspector's control leaves the box
vertices and saturation drops to $65\%$.

\begin{table}[htbp]
\centering\small
\caption{Case~A (one-on-one open loop): terminal miss distance $d_f$,
  minimum separation $d_{\min}$, per-agent $\Delta v$, maximum relative
  speed, solve time, and iterations.  $N = 30$, $\Delta t = 10$~s,
  $r_{\mathrm{cap}} = 50$~m, $2{:}1$ thrust ratio.  Nominal MPC reports
  first-passage capture.}
\label{tab:case_a}
\begin{tabular}{@{}lrrrrrrr@{}}
\toprule
Method & $d_f$ (m) & $d_{\min}$ (m) & $\Delta v_P$ & $\Delta v_E$ & $v_{\mathrm{rel}}^{\max}$ & Time (s) & Iter \\
\midrule
Nominal MPC   & 40.9\,$^\dagger$ & 40.9 & 2.20 & 0.00 & 2.04 & 0.08 & -- \\
IBR           & 72.3 & 72.3 & 4.24 & 2.12 & 1.96 & 1.38 & 3 \\
Extragradient & 72.6 & 72.6 & 4.16 & 2.11 & 1.90 & 0.05 & 141 \\
\bottomrule
\end{tabular}
\\[2pt]{\footnotesize $^\dagger$ Captured at $t_{\mathrm{cap}} = 280$~s
  (first passage).  Times are single cold-start calls including matrix
  setup, as logged by the reproducibility scripts; \cref{tab:timing}
  reports medians over $20$ repeats
  ($23.5$~\si{\milli\second} for the extragradient solver), which are the
  figures quoted in the text.}
\end{table}

\Cref{fig:feasibility} sweeps $\phi_N$ over horizon at four thrust
ratios.  At the $2{:}1$ baseline it stays negative (escape certified,
matching $d_f \approx 72$~\si{\meter}); it crosses zero near a
$2.1{:}1$ ratio, above which the certificate falls silent and the game
must be solved.  Notably the security bound $\bar{V}$ does not fall to
the capture radius at any ratio for this geometry, for the reason given
above, which is one motivation for the receding-horizon play of
\cref{sec:tradespace}.

\begin{figure}[!htbp]
\centering
\includegraphics[width=0.458\textwidth]{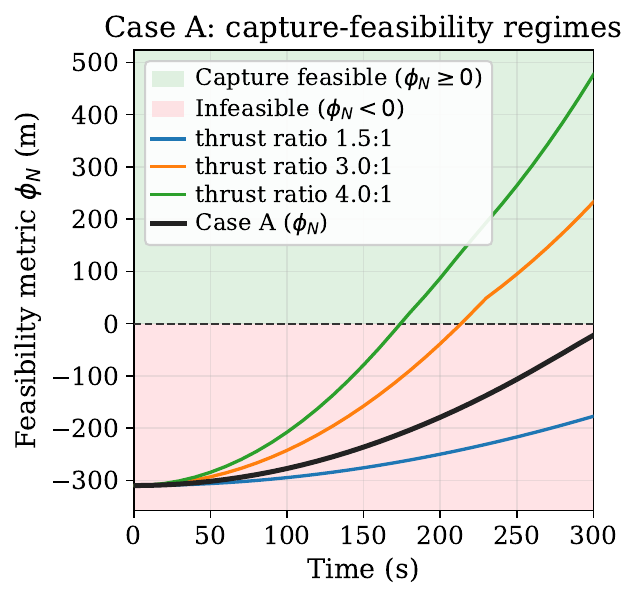}
\caption{Escape certificate $\phi_N$ versus horizon for Case~A at
  thrust ratios $1.5{:}1$, $2{:}1$, $3{:}1$, and $4{:}1$.  Negative
  $\phi_N$ (red band) certifies escape; the $2{:}1$ baseline stays
  below zero, consistent with \cref{tab:case_a}, while higher ratios
  cross into the region where the game must be solved.}
\label{fig:feasibility}
\end{figure}

\begin{figure}[!htbp]
\centering
\includegraphics[width=0.911\textwidth]{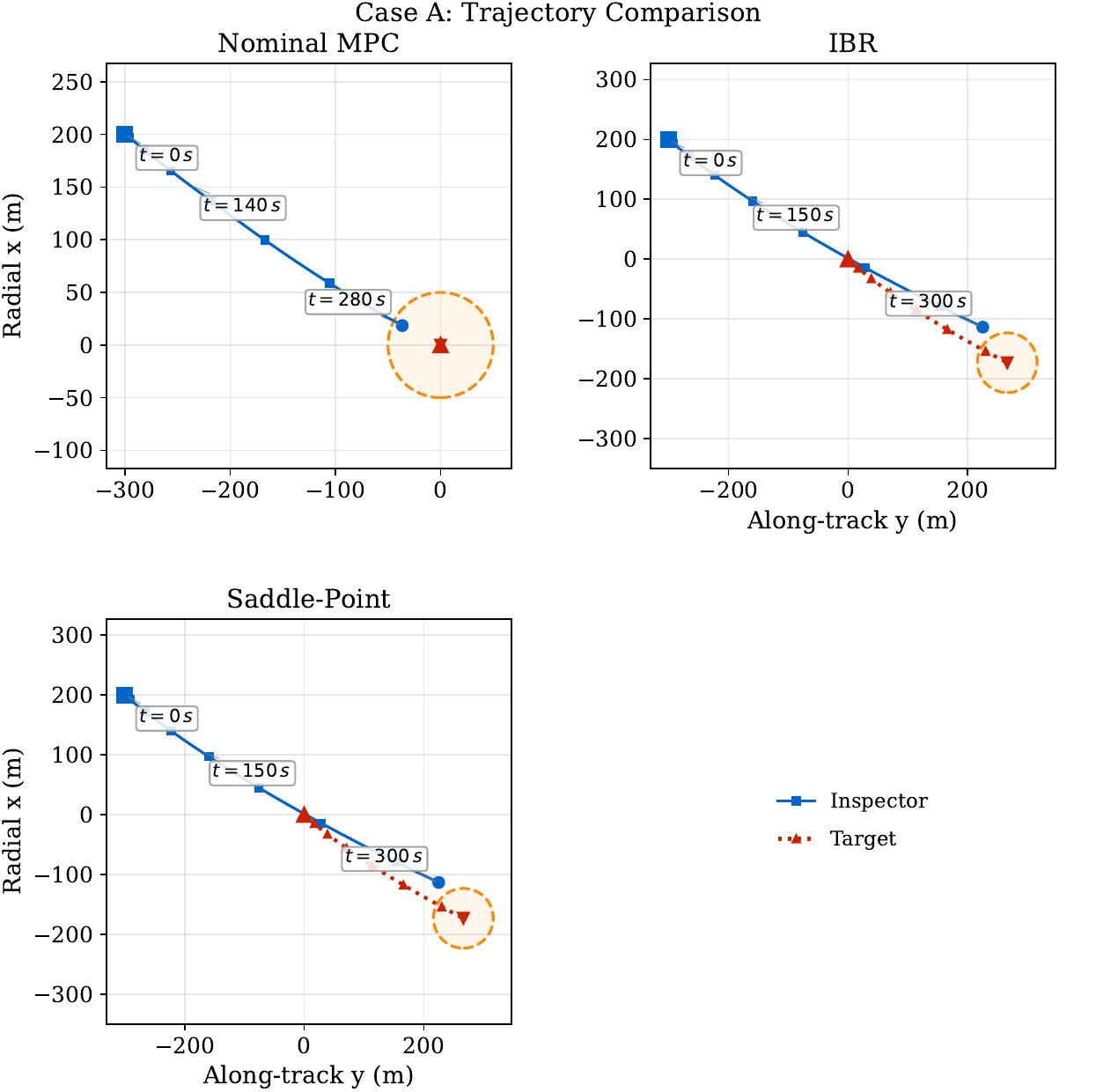}
\caption{Case~A trajectories in the LVLH $x$--$y$ plane.  Solid blue:
  inspector; dotted red: target; dashed circle: capture zone.
  Markers show the inspector at the start, mid-trajectory, and
  terminal sample.
  Panels are drawn at the same size, so the metres-per-point scale differs
  between them; within a panel, distance reads alike along both axes.}
\label{fig:case_a_traj}
\end{figure}

\begin{figure}[!htbp]
\centering
\includegraphics[width=0.823\textwidth]{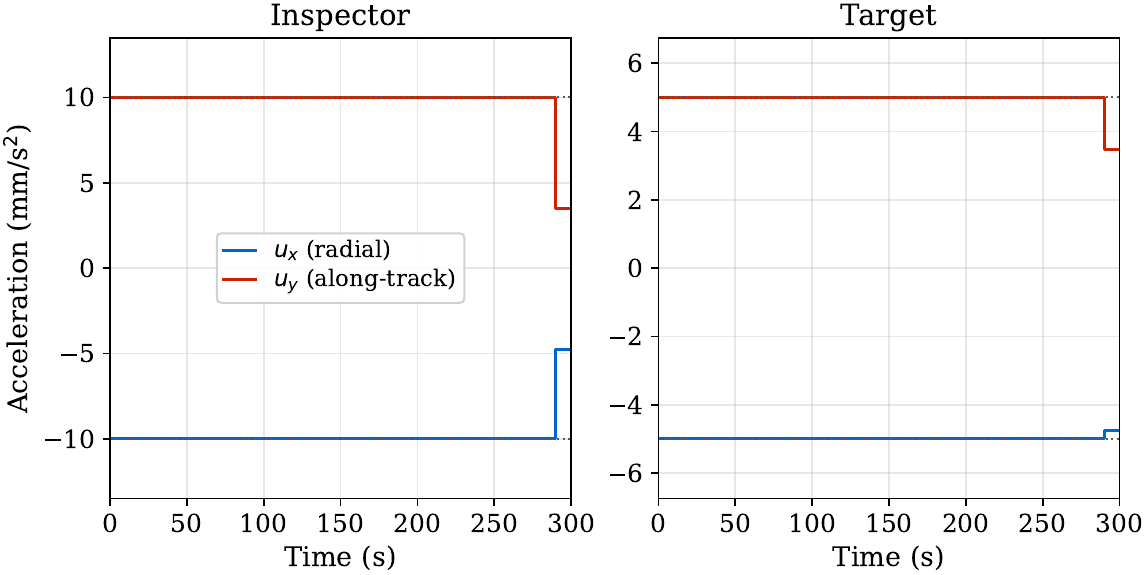}
\caption{Control histories of the Case~A extragradient solution, drawn
  as the zero-order-hold staircases they are.  Dotted lines mark the box
  bounds ($\pm 10$ and $\pm 5$~\si{\milli\meter\per\second\squared}).
  Every component of both plans is saturated except at the final step,
  and no component changes sign: the solutions are bang-bang with a
  constant thrust direction.}
\label{fig:control_profile}
\end{figure}

\subsection{Case B: Engagement with Keep-Out Zones}\label{sec:case_b}

The inspector starts at $[300, -400, 0, 0]^\top$~m with three keep-out
balls between the agents (\cref{app:params}).  This case both
demonstrates the successive-convexification scheme of
\cref{sec:obstacle} and shows why it is needed.  A naive quadratic
penalty added directly to $F$ fails badly here: evaluated at the
extragradient midpoint, where the extrapolated pursuer trajectory dips
into a keep-out ball, the penalty gradient is nearly twenty times the
pursuit gradient and drives the iteration to a spurious point at which
the \emph{pursuer flees the target}, reporting a physically
meaningless $d_f = 964.6$~\si{\meter}; a best-response check exposes it
at once, as a feasible pursuer can cut that distance to
$305$~\si{\meter}.  The successive-convexification scheme
\cref{eq:scp_penalty} avoids this by keeping every subproblem convex.
It converges to $d_f = 306.3$~\si{\meter} with zero keep-out violation
after a hard-constrained polish, and its pursuer-side best-response gap
against the corrected evader is $0.4$~\si{\meter}
(\cref{tab:case_b}).  Exact feasibility is not free.  The outer loop and
its hard-constrained polish take $67$~\si{\second}, against
$25$~\si{\milli\second} for the unconstrained game, so the keep-out
variant is a ground-planning or slow-loop tool rather than the onboard
one; this is the same trade the treatment of exclusion constraints in
\cref{sec:limitations} predicts, since the one-shot solve is what
nonconvexity costs.  The certificates are unaffected, being obstacle-free
constructions: for this geometry $\phi_N = -177.7$~\si{\meter}, so the
target holds at least $227.7$~\si{\meter} against any inspector plan,
constrained or not.  The keep-out zones cost the inspector a
$78$~\si{\meter} detour relative to the unconstrained game
($228.1$~\si{\meter}); the inspector, not the target, is the one
constrained by the obstacles: it skims the tightest ball to within
$10^{-7}$~\si{\meter} at $t = 240$~\si{\second}
(\cref{fig:case_b_traj}), while the target never comes within
$81$~\si{\meter} of any zone.  The unconstrained IBR value
($227.8$~\si{\meter}, obtained by driving straight through a ball with
a $49$~\si{\meter} violation) matches the unconstrained extragradient
value ($228.1$~\si{\meter}) and is listed only as a reference for the
detour cost.  A penalty-weight and obstacle-geometry study
(\cref{sec:obstacle_sensitivity}) confirms exact feasibility across
$\kappa$ and four geometries, including a threaded ten-metre corridor.

\begin{table}[htbp]
\centering\footnotesize
\caption{Case~B (keep-out zones).  The successive-convexification
  extragradient solver enforces the constraints exactly; the
  unconstrained MPC and IBR rows are references (they drive through the
  zones).  ``Clr.''\ is the signed minimum clearance to any keep-out
  boundary (negative $=$ violation).  Path metrics are reported as in
  \cref{tab:case_a}; here the approach is monotone, so $d_{\min}$
  coincides with $d_f$ in every row.}
\label{tab:case_b}
\resizebox{\textwidth}{!}{%
\begin{tabular}{@{}lrrrrrrrl@{}}
\toprule
Method & $d_f$ (m) & $d_{\min}$ (m) & Clr.\ (m) & $\Delta v_P$ & $\Delta v_E$ & $v_{\mathrm{rel}}^{\max}$ & Time (s) & Keep-out \\
\midrule
Nominal MPC        & 34.0$^\dagger$ & 34.0 & $-22.7$ & 3.69 & 0.00 & 3.39 & 0.06 & ignored \\
IBR                & 227.8 & 227.8 & $-49.1$ & 4.24 & 2.12 & 1.89 & 1.32 & ignored \\
Extragradient (no zones) & 228.1 & 228.1 & $-49.1$ & 4.16 & 2.11 & 1.83 & 0.03 & ignored \\
Successive convex.\ (+ polish) & 306.3 & 306.3 & $+0.0$ & 4.20 & 2.12 & 1.65 & 67.5 & enforced \\
\bottomrule
\end{tabular}%
}
\\[2pt]{\footnotesize $^\dagger$ Captured at $t_{\mathrm{cap}} = 290$~s (first passage).}
\end{table}

\begin{figure}[!htbp]
\centering
\includegraphics[width=0.911\textwidth]{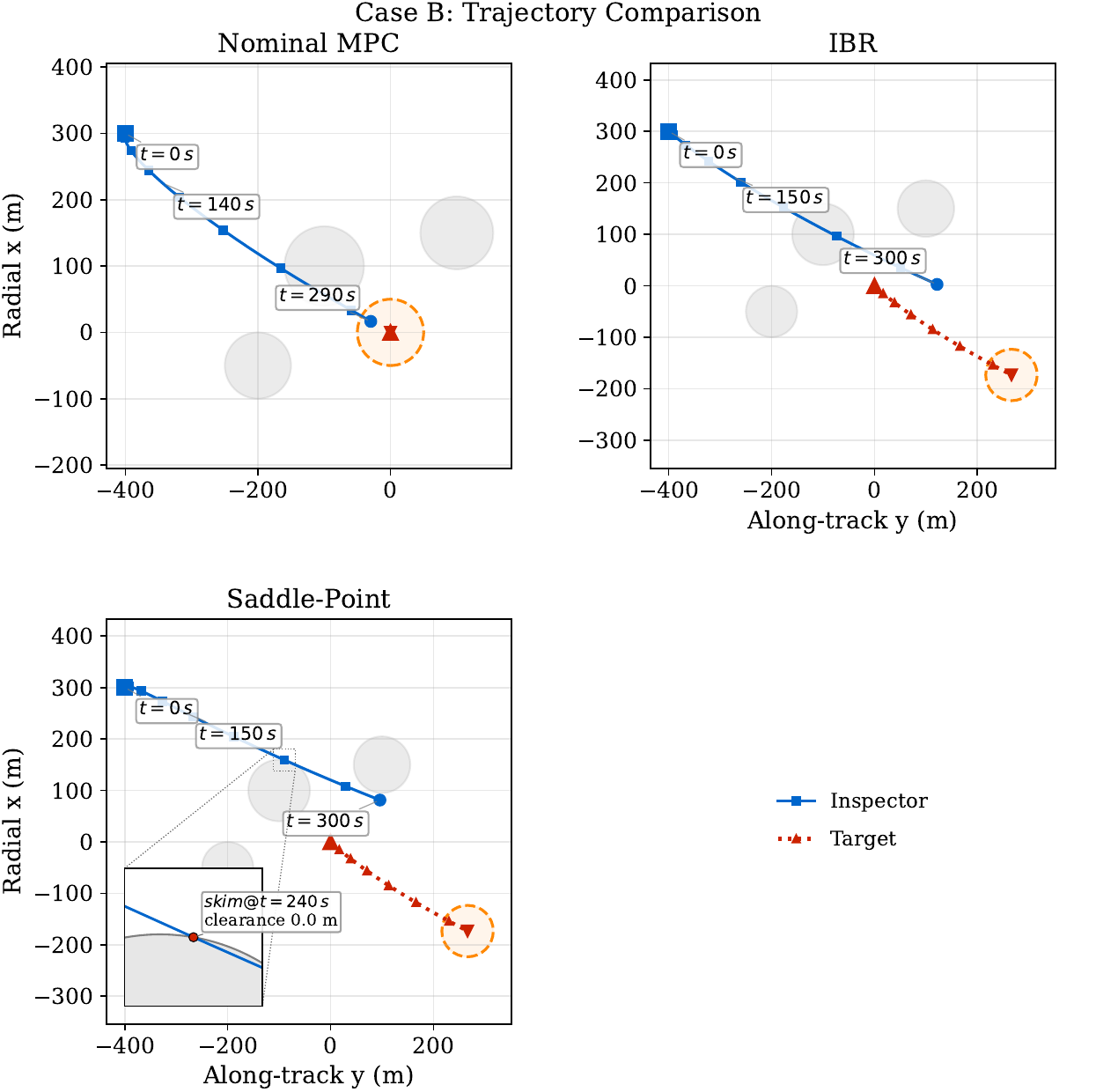}
\caption{Case~B trajectories with three keep-out balls (grey).  Under
  successive convexification (right) the inspector detours around the
  zones while the target flees; the inset zooms on the tightest
  clearance, where the inspector skims the keep-out boundary at
  $t = 240$~\si{\second}.  The passive-target MPC panel terminates at
  first-passage capture.
  Panels are drawn at the same size, so the metres-per-point scale differs
  between them; within a panel, distance reads alike along both axes.}
\label{fig:case_b_traj}
\end{figure}

\subsection{Case C: Two Pursuers and the Value of Encirclement}\label{sec:case_c}

Two inspectors at $\state_0^{(P_1)} = [300, -200, 0, 0]^\top$~m and
$\state_0^{(P_2)} = [-200, -350, 0, 0]^\top$~m engage a target at the
origin.  This case is where the joint escape certificate earns its
place.  Taken one at a time, each pursuer is escapable:
$\phi_N^{(1)} = -98.6$~\si{\meter} and $\phi_N^{(2)} =
-103.9$~\si{\meter}, so against either alone the target can guarantee a
large standoff.  Taken together they are not: no single evasion
direction clears both reachable sets, and the joint certificate is
$\phi_N^{\mathrm{joint}} = +70.0$~\si{\meter} $> 0$.  Encirclement thus
destroys the guarantee even though neither pursuer breaks it
individually, which is a rigorous statement of why a second inspector
helps, independent of any solver.  The weighted surrogate
(\cref{sec:multipursuer}) plays this out, closing the target to
$d_f = 136.8$~\si{\meter} as it trades distance against one pursuer for
distance against the other (\cref{fig:case_c_traj}).  The pursuer-side
best-response gaps here are $1.9$ and $2.8$~\si{\meter} for the two
inspectors, larger than the sub-metre gaps of Cases~A and~B.  That is
the expected price of the surrogate rather than a solver failure: it
coordinates the team but does not solve \cref{eq:G1}, and the rigorous
multi-pursuer statement remains the joint certificate above.

The sequential multi-pursuer IBR baseline is instructive as a failure.
It does not converge: both pursuers best-respond to the same stale
target prediction, so it enters a two-cycle (alternating $52.6$ and
$191.5$~\si{\meter}) that more iterations do not settle, and damping
merely lengthens the cycle.  Sequential best response has no fixed
point on this instance (\cref{tab:case_c2}), which is a concrete
reason to prefer the simultaneous weighted update.

\begin{table}[htbp]
\centering\footnotesize
\caption{Case~C (two pursuers versus one target).  $d_f$ is the closest
  inspector--target terminal distance and $\Delta v_P$ the sum over both
  inspectors ($3.94 + 3.78$~\si{\meter\per\second}).  Sequential IBR
  does not converge (two-cycle), so it has no single trajectory on which
  to measure path metrics; the extragradient surrogate settles at
  $135.2$~\si{\meter} given enough iterations, reported at the
  $200$-iteration cap as $136.8$~\si{\meter}.}
\label{tab:case_c2}
\begin{tabular}{@{}lrrrrrrl@{}}
\toprule
Method & $d_f$ (m) & $d_{\min}$ (m) & $\Delta v_P$ & $\Delta v_E$ & $v_{\mathrm{rel}}^{\max}$ & Time (s) & Status \\
\midrule
Sequential IBR & $52.6$--$191.5$ & -- & -- & -- & -- & 13.9 & two-cycle (no convergence) \\
Extragradient surrogate & 136.8 & 136.8 & 7.72 & 2.11 & 2.59 & 0.08 & $200$-iteration cap \\
\bottomrule
\end{tabular}
\end{table}

\begin{figure}[!htbp]
\centering
\includegraphics[width=0.911\textwidth]{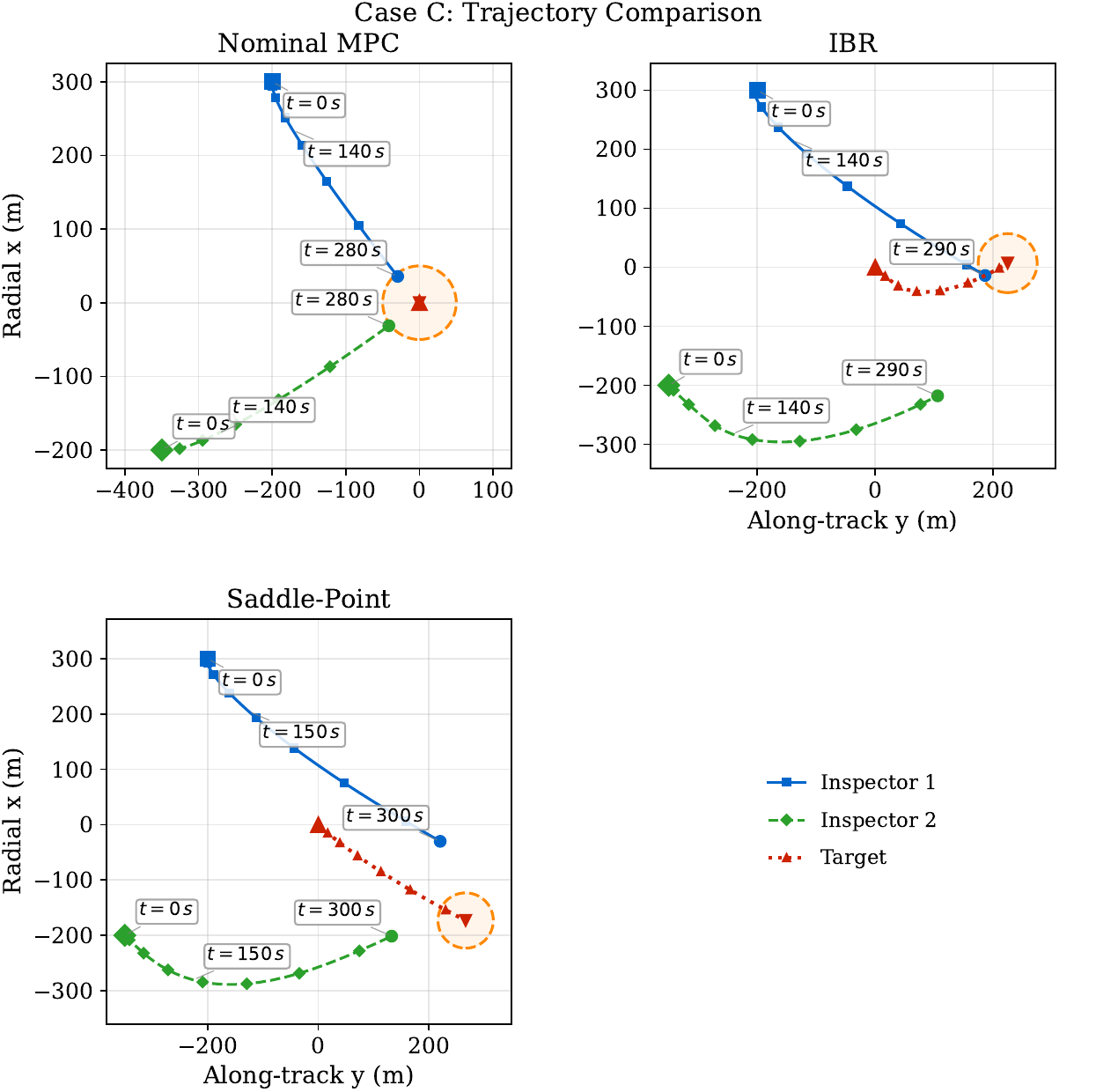}
\caption{Case~C (two pursuers versus one target).  The target trades
  distance against Inspector~1 for distance against Inspector~2,
  ending nearer than in the single-inspector case.  Markers show each
  inspector at the start, mid-trajectory, and terminal sample.  The three
  panels are drawn at the same size, so the metres-per-point scale differs
  between them; within a panel, distance reads alike along both axes.}
\label{fig:case_c_traj}
\end{figure}

\subsection{Penalty and Geometry Sensitivity of the Keep-Out Scheme}\label{sec:obstacle_sensitivity}

Sweeping the penalty weight $\kappa \in \{1, 10, 100,
10^3, 10^4\}$, the unpolished violation of the
successive-convexification scheme decays as $O(1/\kappa)$, from
$49$~\si{\meter} at $\kappa = 1$ to $0.04$~\si{\meter} at $\kappa =
10^4$, and the hard-constrained polish drives it to zero for any
$\kappa \geq 10$.  The polished terminal distance settles at
$306$~\si{\meter} for $\kappa \geq 100$ (the $\kappa = 10^3$ value used
throughout); at $\kappa = 10$ the scheme converges instead to a distinct
feasible trajectory at $255$~\si{\meter}, a reminder that the keep-out
problem is nonconvex and admits several feasible local solutions.  The
naive quadratic penalty, by contrast, has no usable weight: at
$\kappa = 1$ it ignores the constraint, and for $\kappa \geq 10$ it
produces the fleeing-pursuer artifact of \cref{sec:case_b}.  Across
four obstacle geometries (the zones shifted, enlarged by half,
blocking the line of sight, and arranged as a ten-metre corridor), the
scheme reaches zero violation in every case, threading the corridor at
a detour cost of only $7$~\si{\meter}.  The one failure is
$\kappa = 1$, where the initial iterate is too deeply infeasible for
the polish to recover; any $\kappa \geq 10$ avoids it.  This is the
evidence behind the exact-constraint-satisfaction entry of
\cref{tab:assumptions}.

\subsection{Engagement Tradespace}\label{sec:tradespace}

To map outcomes across the operational parameter space, the game is
wrapped in a receding-horizon loop and solved on a $15 \times 15$ grid
of thrust ratio versus warning time at fixed initial geometry
($d_0 = 128$~\si{\meter}), and on ten named scenarios spanning thrust
ratios $1.2{:}1$--$4{:}1$, initial separations $76$--$532$~\si{\meter},
and horizons $125$--$600$~\si{\second}.  The receding-horizon wrapper
is an empirical extension that re-solves the open-loop game at each
step and applies the first control; we claim no recursive-feasibility
proof for it, though per-step feasibility is checkable online from the
reachable-set screen.  Capture is the first-passage
event of \cref{sec:problem}: the engagement ends when the separation
first enters the capture ball.

\Cref{fig:heatmap} shows the feasibility map.  The frontier is
monotone in warning time: with enough warning, a thrust ratio as low
as $1.2{:}1$ suffices, and the minimum capturing ratio falls smoothly
from $4{:}1$ at $90$~\si{\second} to $1.2{:}1$ beyond
$280$~\si{\second}.  Escape, in the ten scenarios, is instead set by
the initial geometry: every escaping case begins at $d_0 \geq
200$~\si{\meter} with a target velocity head start that along-track
drift amplifies faster than the inspector can close.  The ten
trajectories are shown in \cref{fig:gallery}, truncated at the
first-passage event, so the figure and \cref{tab:tradespace} score
every scenario by the same criterion: the five captures reach the
capture circle, and the five escapes stay outside it for the whole
engagement.

\begin{figure}[!htbp]
\centering
\includegraphics[width=0.435\textwidth]{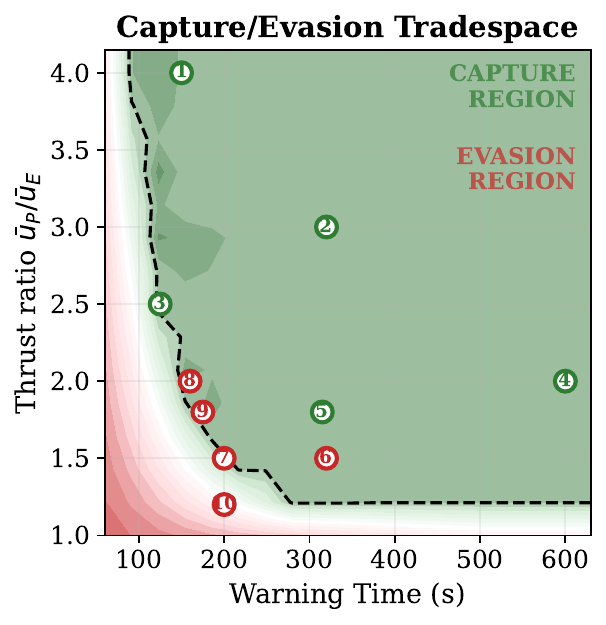}
\caption{Capture / escape feasibility map over thrust ratio and
  warning time at fixed initial geometry ($d_0 = 128$~\si{\meter}).
  Dashed line: the frontier separating capture (green) from escape
  (red).  Numbered markers: the ten scenarios of \cref{tab:tradespace};
  their colors reflect each scenario's outcome at its own initial
  separation, which can differ from the fixed-geometry background
  (e.g.\ S8 at $d_0 = 532$~\si{\meter}).}
\label{fig:heatmap}
\end{figure}

\begin{table}[!ht]
\centering\small
\caption{Ten-scenario tradespace under the receding-horizon game with
  first-passage capture.  Top block (S1--S5): capture, reported by
  time-to-capture $t_{\mathrm{cap}}$ and the separation at capture.
  Bottom block (S6--S10): escape, reported by the minimum separation
  reached (equal to the terminal separation, as these are still
  closing at the horizon).  Capture rate $5/10$; mean thrust ratio
  $2.66{:}1$ for capture, $1.60{:}1$ for escape.}
\label{tab:tradespace}
\resizebox{\textwidth}{!}{%
\begin{tabular}{@{}llrrrrrrrrrl@{}}
\toprule
ID & Name & Ratio & $d_0$ (m) & Horizon (s) & $r_{\mathrm{cap}}$ (m)
   & $t_{\mathrm{cap}}$ (s) & $d_{\min}$ (m) & $v_{\mathrm{rel}}^{\max}$ (m/s)
   & $\Delta V_P$ (m/s) & $\Delta V_E$ (m/s) & Outcome \\
\midrule
S1 & High thrust ratio          & 4.0 & 100 & 150 & 40 &  70 & 39.6 & 1.13 & 1.56 & 0.49 & \textbf{Capture} \\
S2 & Moderate, long horizon     & 3.0 & 144 & 320 & 40 & 168 & 37.0 & 0.90 & 2.39 & 1.16 & \textbf{Capture} \\
S3 & Close range, short horizon & 2.5 &  76 & 125 & 40 &  95 & 39.6 & 0.68 & 1.43 & 0.64 & \textbf{Capture} \\
S4 & Persistent low ratio       & 2.0 & 192 & 600 & 45 & 320 & 42.5 & 0.71 & 3.29 & 2.19 & \textbf{Capture} \\
S5 & Marginal approach          & 1.8 & 103 & 315 & 45 & 133 & 42.4 & 0.61 & 1.52 & 0.91 & \textbf{Capture} \\
\midrule
S6 & Along-track drift wins     & 1.5 & 292 & 320 & 50 & --  & 162.9 & 0.96 & 3.31 & 2.21 & Escape \\
S7 & Target velocity advantage  & 1.5 & 250 & 200 & 50 & --  & 208.8 & 0.74 & 2.04 & 1.36 & Escape \\
S8 & Too far at start           & 2.0 & 532 & 160 & 50 & --  & 456.1 & 0.94 & 2.15 & 1.07 & Escape \\
S9 & Narrow escape (target)     & 1.8 & 200 & 175 & 45 & --  & 135.9 & 0.92 & 2.14 & 1.19 & Escape \\
S10 & Orbit-mechanics escape    & 1.2 & 308 & 200 & 50 & --  & 281.4 & 0.32 & 1.63 & 1.36 & Escape \\
\bottomrule
\end{tabular}%
}
\end{table}

\begin{figure}[!htbp]
\centering
\includegraphics[width=\textwidth]{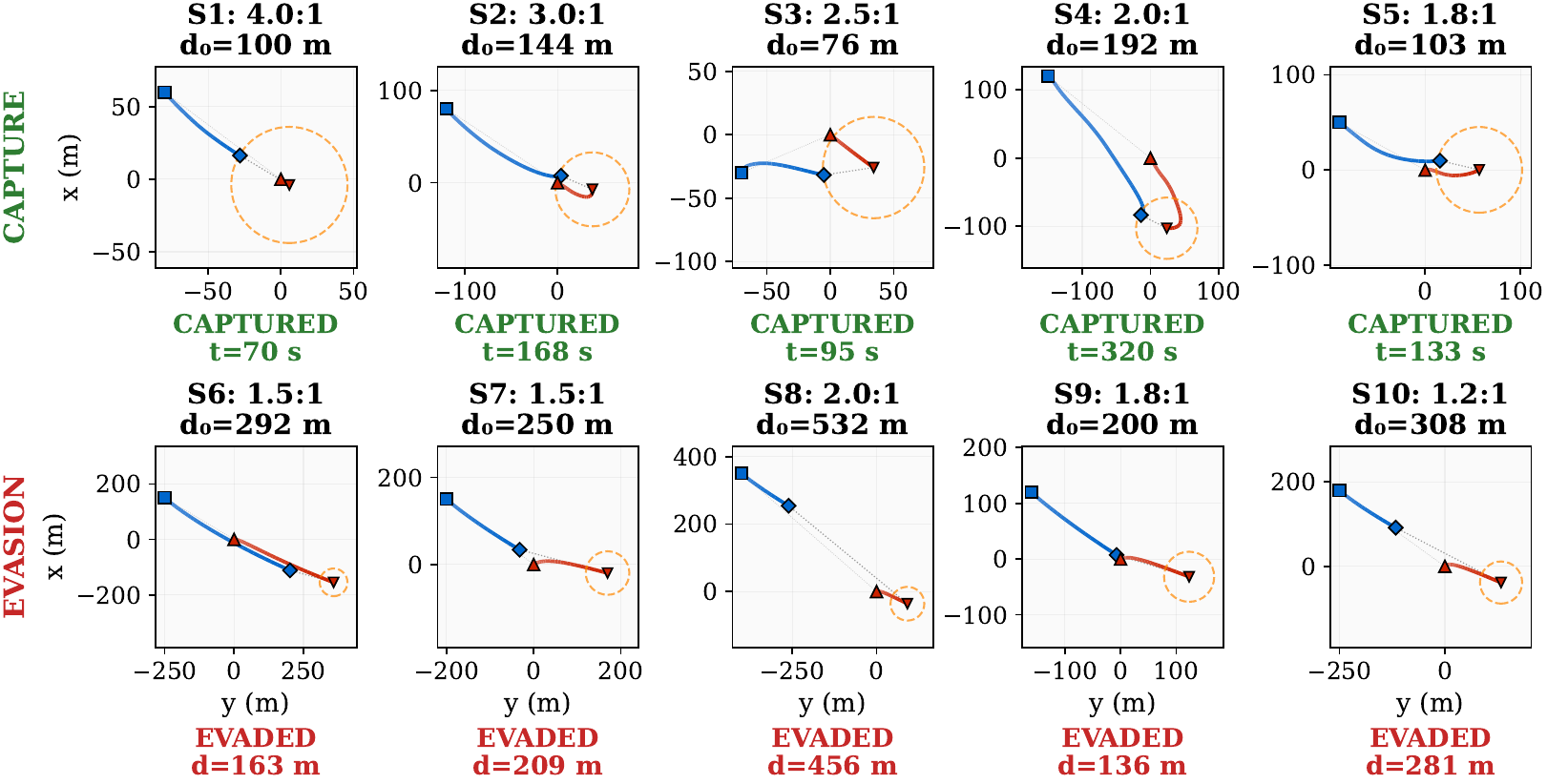}
\caption{Per-scenario trajectories under the receding-horizon game.
  Top row (S1--S5): capture, with trajectories truncated at the
  first-passage event (inspector marker on the capture circle).
  Bottom row (S6--S10): escape.  Blue: inspector; red: target; dashed
  circle: capture zone.  Every panel is drawn at the same size, so the
  metres-per-point scale differs between panels; within a panel, distance
  reads alike along both axes.}
\label{fig:gallery}
\end{figure}

\subsection{Design Implications}\label{sec:design_implications}

The tradespace supports a short list of design rules, with the
qualifications the data actually warrant.
\begin{enumerate}[leftmargin=*]
\item \emph{Thrust ratio is necessary but not sufficient.} Capture
  needs $\bar{u}_P/\bar{u}_E \geq 1.8$ (mean $2.66{:}1$ among
  captures); a $100$~\si{\kilo\gram} inspector with a $1$~\si{\newton}
  thruster ($\approx 10$~\si{\milli\meter\per\second\squared}) meets
  this against a target of half its authority.  Because thrust ratio
  reduces to $(T_P/m_P)/(T_E/m_E)$, a light, high-thrust-to-mass
  inspector outperforms a heavier one of nominally larger thrust.
\item \emph{Warning time buys thrust, monotonically and without
  reversal.} Warning time can be traded against installed thrust across
  the whole range swept (\cref{sec:tradespace}).
\item \emph{Detection range can dominate thrust.} S8 out-thrusts S3 by
  a smaller margin than S3 out-ranges it, and S3 is the one that
  captures, so acquisition range is the first thing to buy.  The
  $76$--$532$~\si{\meter} range and $90$--$600$~\si{\second} warning
  swept here are within demonstrated practice: relative GNSS covers this
  regime when the target cooperates~\cite{DAmico2012PRISMA}, and
  angles-only optical tracking of a non-cooperative target has been
  flown from tens of kilometres inward~\cite{Gaias2018AVANTI}, so at
  these separations warning time is set by the tracking update rate
  rather than by acquisition.
\item \emph{Budget.} A $5$~\si{\meter\per\second} reserve covers every
  observed capture at the $50$~\si{\meter} radius
  (\cref{tab:tradespace}).
\end{enumerate}
The capture radius itself is a design choice, and at $50$~\si{\meter}
it is an approach gate of the scale standard practice reserves around a
resident space object~\cite{Fehse2003} rather than a docking tolerance,
so we sweep it.  At
$r_{\mathrm{cap}} = 50$ and $20$~\si{\meter} the same five scenarios
capture; at $10$ and $5$~\si{\meter} the count drops to four, as S3's
short $125$~\si{\second} window no longer suffices, and time-to-capture
roughly doubles as the radius tightens.  The $1.8{:}1$ ratio threshold
is unchanged from $50$ down to $5$~\si{\meter}; what a tighter
docking-class radius demands is not more thrust but a longer
engagement window.

\subsection{Statistics, Screening, and Navigation Error}\label{sec:solver_char}

A $200$-trial Monte Carlo study perturbs the initial state
(inspector $\pm 50$~\si{\meter}, $\pm 0.5$~\si{\meter\per\second};
target $\pm 20$~\si{\meter}, $\pm 0.2$~\si{\meter\per\second}) and
records both game solvers and the certificate
(\cref{app:mc}).  Against a passive target the inspector captures in
$99.0\%$ of trials ($95\%$ Wilson interval $[96.4, 99.7]$); against a
maneuvering target the capture rate is $25.0\%$ $[19.5, 31.4]$ for both
the extragradient and IBR solvers, which agree on the outcome trial by
trial.  Capture concentrates at short range: $44\%$ for
$d_0 < 320$~\si{\meter}, falling to $5\%$ beyond
$400$~\si{\meter}.  The terminal-distance distribution is sharply
bimodal, with a capture cluster near $42$~\si{\meter} and an escape
cluster near $162$~\si{\meter} (\cref{fig:mc_histogram}).

\begin{figure}[!htbp]
\centering
\includegraphics[width=0.464\textwidth]{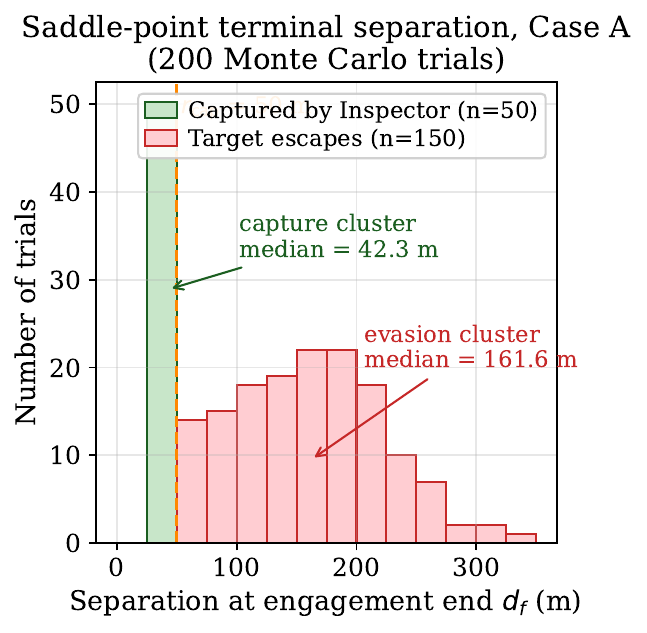}
\caption{Terminal-distance distribution over the $200$-trial Monte
  Carlo study under game play.  The distribution is bimodal: a capture
  cluster (green, median $42$~\si{\meter}) and an escape cluster (red,
  median $162$~\si{\meter}), split cleanly at the $r_{\mathrm{cap}} =
  50$~\si{\meter} radius (dashed).  The sign of the escape certificate
  $\phi_N$ predicts which cluster each trial lands in with no error.}
\label{fig:mc_histogram}
\end{figure}

The escape certificate performs better than its own statement requires,
and it is worth being precise about what is being measured.
\Cref{thm:escape} is one-sided: it says nothing when $\phi_N \geq 0$, so
it cannot be wrong on that branch.  The false-positive rate below
therefore scores $\phi_N$ used as a two-sided \emph{screen}, which is
more than the theorem claims, and is the quantity an operator deciding
whether to solve the full game actually cares about.  Used that way,
across all $200$ trials the
sign of $\phi_N$ predicts the game outcome without a single error
($0$ false positives out of $50$ predicted captures, $0$ false
negatives out of $150$ certified escapes; $95\%$ upper bounds $5.8\%$
and $2.0\%$).  On the escape branch the realized separation tracks the
certificate bound almost exactly, $d_f \approx r_{\mathrm{cap}} -
\phi_N$ with correlation $-0.9999$, so the bound is not merely
correct but tight, and the outer approximation costs little
conservatism in practice.  The perturbations leave the two branches
well separated, so this sample does not probe $\phi_N$ within a few
metres of zero; the certificate direction is safe there by
\cref{thm:escape}, but the screen's behaviour on the boundary is not
measured.

Perfect state knowledge is an idealization, so we also perturb it.  A
separate hundred-trial study plans open-loop from an \emph{estimated}
relative state and flies the plan against the truth, holding the true
initial conditions fixed so that navigation error is isolated from the
randomized geometry above; its unperturbed capture rate is $27\%$.
Cooperative-grade navigation (position
$0.1$~\si{\meter}, velocity $1$~\si{\milli\meter\per\second}, the
regime of GNSS relative navigation~\cite{DAmico2012PRISMA}) leaves
every outcome unchanged: the median terminal-distance shift is under
$0.01$~\si{\meter}, and no trial changes capture status.
Angles-only navigation of a non-cooperative target is far coarser, at
metre-level cross-range and roughly ten percent of range down the
boresight~\cite{Ardaens2018AnglesOnly,Gaias2018AVANTI}, and it cuts
that rate to $11\%$, with every flip a capture lost
to an escape.  Navigation error is thus one-sided: it penalizes the
inspector, which must close precisely, and never the target.  This
places the value of the whole framework in context: the
terminal-distance game and its certificates are only as good as the relative
navigation feeding them, and the binding operational requirement for a
non-cooperative inspection is sensing, not thrust.

The solver's efficiency is characterized in
\cref{app:scalability}.  In one shot the extragradient method returns
a strategy pair in about $25$~\si{\milli\second}, some forty times
faster than rebuilding the IBR QPs.  That comparison is cold-start only: a
condensed, warm-started QP implementation of IBR is faster still when
amortized over many solves.  The extragradient method's durable
advantages are that it requires no per-step problem compilation, its
memory is $O(Nn_u)$, and its cost scales linearly in the number of
pursuers.  The escape certificate costs a fraction of one game solve
once the reachable-set maps are in hand, so screening before solving
is nearly free.

\subsection{Model Checks: Out-of-Plane Motion and Long Engagements}\label{sec:scope_checks}

Two modeling choices carry the results so far: the planar reduction and
the $300$~\si{\second} horizon.  Both are checked here rather than
argued for.

\paragraph{Six states and out-of-plane motion}
The engagements of \cref{sec:case_a} were re-solved on the full
six-state HCW model of \cref{eq:hcw_3d} with three control axes,
keeping the in-plane initial conditions and the per-axis thrust bound
unchanged, so the inspector gains a third axis but no extra authority
per axis, which is what a body-mounted thruster set provides.
\Cref{tab:threed} collects the outcome.  With zero cross-track state
the six-state solve reproduces the planar terminal distance to
$4 \times 10^{-13}$~\si{\meter}, confirming the decoupling argument of
\cref{sec:dynamics} numerically rather than by inspection.  Cross-track
offsets up to $150$~\si{\meter} leave the outcome unchanged, because the
inspector can null that much separation within the horizon at
$10$~\si{\milli\meter\per\second\squared}; the cost appears as
$\Delta v$, which rises from $4.16$ to $4.62$~\si{\meter\per\second}.
At $300$~\si{\meter} of cross-track separation the offset can no longer
be nulled, the target holds $60.9$~\si{\meter} of it to the terminal
time, and the miss distance grows from $72.6$ to
$94.9$~\si{\meter}.  The escape certificate tracks this without
modification, tightening from $-22.3$ to $-44.3$~\si{\meter} and
bounding the realized separation to within $0.6$~\si{\meter}.  Three
effects are worth noting.  The certificate needs a direction template on
the sphere rather than the circle, and a template of practical size sits
far enough from the best direction to cost tens of metres of margin, so
the reported values refine the best template direction by local ascent;
this is sound because \cref{thm:escape} holds for every single
direction.  The security value is genuinely weaker in three dimensions,
rising from $319.7$ to $389.8$~\si{\meter}, because the target's
reachable set has grown an axis.  And the solver cost is essentially
unchanged, $19$--$22$~\si{\milli\second} against
$25$~\si{\milli\second} planar, since the extragradient iteration is
matrix--vector work in the control dimension.  Replaying the six-state
plans against nonlinear two-body-plus-$J_2$ propagation moves the
terminal distance by at most $0.030$~\si{\meter}.

The IBR baseline behaves instructively here: it returns
$72.3$~\si{\meter} in every configuration, including the
$300$~\si{\meter} one, because its linearized target subproblem does not
exploit the new axis.  That is the concrete form of the caveat in
\cref{sec:setup} that IBR solves a related but distinct game, and it is
why we treat its agreement as a cross-check rather than as a reference
value.

\begin{table}[htbp]
\centering\small
\caption{Case~A on the six-state model with out-of-plane initial
  offsets.  Per-axis thrust bounds, horizon, and in-plane initial
  conditions are those of \cref{tab:case_a}; $\Delta z_0$ is the initial
  cross-track separation and $\Delta z_N$ the terminal one.  The last
  column is the change in terminal distance when the same plans are
  flown against nonlinear ECI$+J_2$ dynamics.}
\label{tab:threed}
\begin{tabular}{@{}lrrrrrrr@{}}
\toprule
Geometry & $\Delta z_0$ & $\Delta z_N$ & $d_f$ & $\phi_N$ & $r_{\mathrm{cap}}-\phi_N$ & $\Delta v_P$ & Nonlin.\ shift \\
 & (m) & (m) & (m) & (m) & (m) & (m/s) & (m) \\
\midrule
Planar limit          &   0 &  0.0 & 72.63 & $-22.28$ & 72.28 & 4.16 & 0.024 \\
Cross-track offset    &  50 &  0.0 & 72.63 & $-22.28$ & 72.28 & 4.39 & 0.026 \\
Cross-track offset    & 150 &  0.0 & 72.63 & $-22.28$ & 72.28 & 4.62 & 0.030 \\
Cross-track offset    & 300 & 60.9 & 94.91 & $-44.34$ & 94.34 & 5.07 & 0.022 \\
Split $\pm150$        & 300 & 60.9 & 94.91 & $-44.34$ & 94.34 & 5.07 & 0.021 \\
Target drifting out-of-plane & 150 & 0.0 & 72.63 & $-22.28$ & 72.28 & 4.40 & 0.029 \\
\bottomrule
\end{tabular}
\end{table}

\paragraph{Longer engagements}
\Cref{tab:horizons} extends Case~A from $300$ to $1200$~\si{\second}.
The engagement changes character: at $300$~\si{\second} the target
escapes with the certificate confirming it, while from
$450$~\si{\second} onward the $2{:}1$ inspector closes to contact and
$\phi_N$ turns positive, which is the certificate correctly falling
silent rather than failing.  With enough time a thrust-advantaged
inspector wins, so the informative regime is the one where the horizon is
short enough for the outcome to be in doubt.

Two things do degrade with horizon, and both are visible in the table.
The first is a solver setting.  The stopping tolerance was tuned at
$N = 30$, and at $900$ and $1200$~\si{\second} it stops the iteration
early, leaving a pursuer best-response gap of $27.9$ and
$19.9$~\si{\meter} respectively -- large enough to matter, though not
large enough to change the capture label.  The per-run certificate is
what exposes this: tightening the tolerance to $10^{-8}$ closes the gap
to $2$~\si{\milli\meter} for $239$ and $608$ iterations, still
$135$ and $272$~\si{\milli\second}; a solver reporting only convergence
would have hidden it.  The second is the linear
model itself, which is the real limit on horizon: the peak position
discrepancy against nonlinear ECI$+J_2$ propagation grows from
$0.056$~\si{\meter} at $300$~\si{\second} to $2.56$~\si{\meter} at
$1200$~\si{\second}, a factor of $46$, while the terminal-distance
discrepancy stays under $0.35$~\si{\meter}, i.e.\ under $0.7\%$ of the
capture radius.  Engagements much beyond $1200$~\si{\second} would need
either re-linearization about a reference trajectory or the elliptical
LTV treatment of \cref{sec:elliptical}, not a different solver.

\begin{table}[htbp]
\centering\small
\caption{Case~A over horizons from $300$ to $1200$~\si{\second}
  ($\Delta t = 10$~\si{\second}).  $\gamma_P$ is the pursuer
  best-response gap at the default stopping tolerance and at
  $10^{-8}$, with the iteration count for the latter.  The last two
  columns compare against nonlinear ECI$+J_2$ propagation of the same
  plans.}
\label{tab:horizons}
\begin{tabular}{@{}rrlrrrrr@{}}
\toprule
$T$ & $d_f$ & Outcome & $\phi_N$ & $\gamma_P$ & $\gamma_P$ & Iter. & Peak lin. \\
(s) & (m) & & (m) & (m) & tight (m) & tight & error (m) \\
\midrule
 300 & 72.63 & escape  & $-22.3$ & 0.40  & 0.398 & 142 & 0.056 \\
 450 &  0.01 & capture & $244$   & 0.01  & 0.000 &  92 & 0.096 \\
 600 &  0.00 & capture & $660$   & 0.00  & 0.000 &  69 & 0.173 \\
 900 & 27.85 & capture & $2025$  & 27.85 & 0.002 & 239 & 0.258 \\
1200 & 19.89 & capture & $3782$  & 19.89 & 0.000 & 608 & 2.556 \\
\bottomrule
\end{tabular}
\end{table}

\section{Extension to Elliptical Reference Orbits}\label{sec:elliptical}

GTO, Molniya, and many high-altitude or science orbits carry
non-negligible eccentricity, for which time-invariant HCW is
inaccurate.  The Yamanaka--Ankersen (YA) state transition
matrix~\cite{Yamanaka2002} yields a linear time-varying (LTV) system,
which the solver takes without a single change, for the reason
\cref{rem:ltv_convexity} states precisely.

\subsection{Yamanaka--Ankersen LTV Model}\label{sec:ya_stm}

Relative motion about an elliptical Keplerian orbit of eccentricity
$e$ is governed by the Tschauner--Hempel
equations~\cite{Tschauner1965}, which reduce to HCW~\cref{eq:hcw_3d}
in the circular limit $e = 0$.  Yamanaka and
Ankersen~\cite{Yamanaka2002} constructed a closed-form state
transition matrix
\begin{equation}\label{eq:ya_stm}
\state(\nu) = \Phi(\nu,\nu_0)\,\state(\nu_0),
\end{equation}
parameterized by the true anomaly $\nu$ and resolved through the
Kepler equation $M = \mathcal{E} - e\sin \mathcal{E}$, in which
$\mathcal{E}$ is the eccentric anomaly, written so as not to collide
with the evader label $E$.  Its auxiliary integral is what
avoids the near-circular singularity discussed in
\cref{sec:related_elliptical}; the full derivation, including the
fundamental matrix entries and the time-to-anomaly mapping, is in
\cref{app:ya_stm}.

At each timestep $k$ the true anomaly advances by Kepler's equation
from $\nu_k$ to $\nu_{k+1}$, yielding the discrete LTV form
\begin{equation}\label{eq:ltv_discrete}
\state_{k+1} = A_k\,\state_k + B_k\,\ctrl_k,
\quad k = 0,\ldots,N-1,
\end{equation}
with $A_k = \Phi(\nu_{k+1},\nu_k)$ and $B_k$ obtained from Simpson's
rule quadrature of $\int_0^{\Delta t} \Phi(\nu_{k+1},\nu(\tau)) [0;\,I]\, d\tau$,
which achieves $O(\Delta t^5)$ accuracy.  At $e = 0$, $(A_k, B_k)$
reduce to the constant HCW matrices $(A_d, B_d)$ of
\cref{eq:discrete} to machine precision (see validation in
\cref{sec:hcw_recovery}).

\subsection{Reachable Sets and Game Formulation under LTV Dynamics}\label{sec:ltv_game}

The Minkowski-sum decomposition \cref{eq:reach_mink} and the
trajectory-space representation \cref{eq:traj_affine} both generalize
to LTV systems by replacing matrix powers with time-varying products
$\Phi_{N:j} = A_{N-1}A_{N-2}\cdots A_j$.  The forward reachable set
becomes
\begin{equation}\label{eq:ltv_reach_mink}
\reachset_N(\state_0) = \Phi_{N:0}\,\state_0 \oplus
\bigoplus_{k=0}^{N-1} \Phi_{N:k+1}\, B_k\, \ctrlset,
\end{equation}
with support-function evaluation
\begin{equation}\label{eq:ltv_sf_reach}
\support_{\reachset_N}(\bm{d}) =
\bm{d}^\top \Phi_{N:0}\,\state_0
+ \sum_{k=0}^{N-1} \support_{\ctrlset}\!\left(
  B_k^\top \Phi_{N:k+1}^\top \bm{d} \right);
\end{equation}
the LTV control-to-state mapping has block
$S_{k,j}^{\mathrm{LTV}} = \Phi_{k:j+1} B_j$ for $j<k$ in place of
$A_d^{k-j-1} B_d$.  Precomputing and caching the time-varying products
once leaves the $O(NL)$ screening cost and the $O(N n_p n_u)$
per-iteration solver cost unchanged.

\begin{remark}[Structure preserved under LTV dynamics]
\label{rem:ltv_convexity}
The LTV payoff has the same quadratic form as $J$ in \cref{eq:payoff}
with $G_P, G_E, \bm{g}_0$ replaced by their LTV counterparts.  Every
statement of \cref{sec:game} carries over verbatim: the curvature
threshold of \cref{prop:curvature} (with $G_E$ replaced by
$G_E^{\mathrm{LTV}}$), the escape certificate \cref{thm:escape}, and
the security bound \cref{def:security} all hold, since each depends on
the dynamics only through the terminal support functions
\cref{eq:ltv_sf_reach}.  The elliptical game is thus certified by the
same two inequalities as the circular one.
\end{remark}

\subsection{Eccentricity and Orbital Phase}\label{sec:ecc_results}

The mechanism behind the eccentricity dependence is geometric.  Near
periapsis the chief's transverse velocity is largest ($r v_\theta = h$
by angular-momentum conservation), so a deputy maneuvering relative to
it sees a strong, rapidly varying along-track shear; near apoapsis the
same deputy sees a gentle one.  The along-track drift that already
aids the target's escape in the circular case is therefore amplified
at periapsis and damped at apoapsis, and an engagement that begins at
the wrong orbital phase is harder for the inspector.

To keep the comparison physical, all elliptical scenarios fix a
$500$~\si{\kilo\meter} perigee altitude, so the semi-major axis grows
with eccentricity as $a = (R_\oplus + 500~\si{\kilo\meter})/(1-e)$.
Holding $a$ fixed instead would drive the perigee below the surface at
high $e$.  The Case~A scenario is re-run on the
LTV system over $e \in \{0, 0.1, 0.3, 0.6\}$ at periapsis and over
five initial true anomalies at the two higher eccentricities
(\cref{tab:eccentricity}).

\begin{table}[htbp]
\centering\footnotesize
\caption{Terminal distance and escape certificate at periapsis
  ($\nu_0 = 0^\circ$) for four eccentricities, fixed
  $500$~\si{\kilo\meter} perigee.  Other parameters as in Case~A.  Here
  $\phi_N$ is the $L=48$ screen value, so the $e=0$ row
  ($-22.2$~\si{\meter}) matches the $L=96$ certificate of
  \cref{sec:case_a} ($-22.3$~\si{\meter}) up to the direction-sampling
  resolution.  Five rows reach the $200$-iteration cap; re-solving them
  to $2\times10^{4}$ iterations moves $d_f$ by at most
  $0.21$~\si{\meter} and changes no certificate sign and no outcome.}
\label{tab:eccentricity}
\begin{tabular}{@{}rrrrrc@{}}
\toprule
$e$ & $a$ (km) & $\nu_0$ (deg) & $d_f$ (m) & $\phi_N$ (m) & Captured \\
\midrule
0.0 &  6871 &   0 & 72.6 & $-22.2$ & No \\
0.1 &  7634 &   0 & 75.9 & $-25.4$ & No \\
\midrule
0.3 &  9816 &   0 & 81.9 & $-31.8$ & No \\
    &       &  45 & 69.7 & $-19.3$ & No \\
    &       &  90 & 53.5 & $-3.3$  & No \\
    &       & 135 & 55.7 & $-5.4$  & No \\
    &       & 180 & 59.1 & $-9.1$  & No \\
\midrule
0.6 & 17178 &   0 & 91.1 & $-41.0$ & No \\
    &       &  45 & 69.2 & $-18.8$ & No \\
    &       &  90 & 53.9 & $-3.6$  & No \\
    &       & 135 & 67.2 & $-16.8$ & No \\
    &       & 180 & 71.3 & $-21.0$ & No \\
\bottomrule
\end{tabular}
\end{table}

Two patterns emerge.  Increasing eccentricity at periapsis raises the
miss distance modestly, from $72.6$ to $91.1$~\si{\meter} between
$e = 0$ and $e = 0.6$, a factor of $1.25$, and the certificate tracks
this in both sign and magnitude.  The dependence on orbital phase is
stronger: at
$e = 0.6$ the miss distance varies by a factor of $1.7$ across the
sweep ($91.1$~\si{\meter} at periapsis, $53.9$~\si{\meter} near
$\nu_0 = 90^\circ$), so \emph{when} in the orbit an engagement begins
matters as much as the thrust ratio does on a circular orbit.  Whether
the engagement is timed by wall-clock or by orbital arc changes the
story: holding the $300$~\si{\second} horizon fixed, escape remains
certified at every phase, but scaling the horizon to a fixed orbital
arc (about $1200$~\si{\second} at $e = 0.6$) gives the inspector enough
time that the certificate flips sign and capture becomes feasible at
every $\nu_0$.  The practical reading is that on eccentric orbits the
inspector should budget engagement time in orbital arc, not seconds.
\Cref{fig:eccentricity_traj} shows the geometry: as $e$ grows the
terminal reachable sets become asymmetric, radially squashed and
along-track elongated near periapsis, which is what widens the
target's escape margin $-\phi_N$ from $22.2$ to $41.0$~\si{\meter}
across the sweep.

Because $\nu_0$ is not known exactly in flight, where ephemeris and
timing error both surface as a phase offset, the outcome should not
depend sharply on it.  Planning on an assumed $\nu_0$ and flying the
resulting sequence against the truth, over $\nu_0 \in \{90^\circ,
135^\circ, 180^\circ\}$ at $e = 0.6$ and offsets up to $\pm
15^\circ$, moves the flown terminal distance by at most
$0.31$~\si{\meter} and changes no capture outcome.  The
\emph{predicted} value is far more sensitive than the flown one: at
$\nu_0 = 90^\circ$ a $+15^\circ$ error moves the planned $d_f$ from
$53.7$ to $58.3$~\si{\meter} while the flown result moves to
$54.0$~\si{\meter} (this sweep is run to convergence, which is the
$0.17$~\si{\meter} offset from the capped entry in
\cref{tab:eccentricity}), and it likewise inflates the certificate margin
from $-3.7$ to $-8.3$~\si{\meter} without changing its sign.  Anomaly
error of this size therefore costs accuracy in the prediction, not
correctness in the decision.  Larger or structured uncertainty is
better handled by inflating the reachable sets, as in the robust
extension noted in \cref{sec:discussion}.

\begin{figure}[!htbp]
\centering
\includegraphics[width=0.920\textwidth]{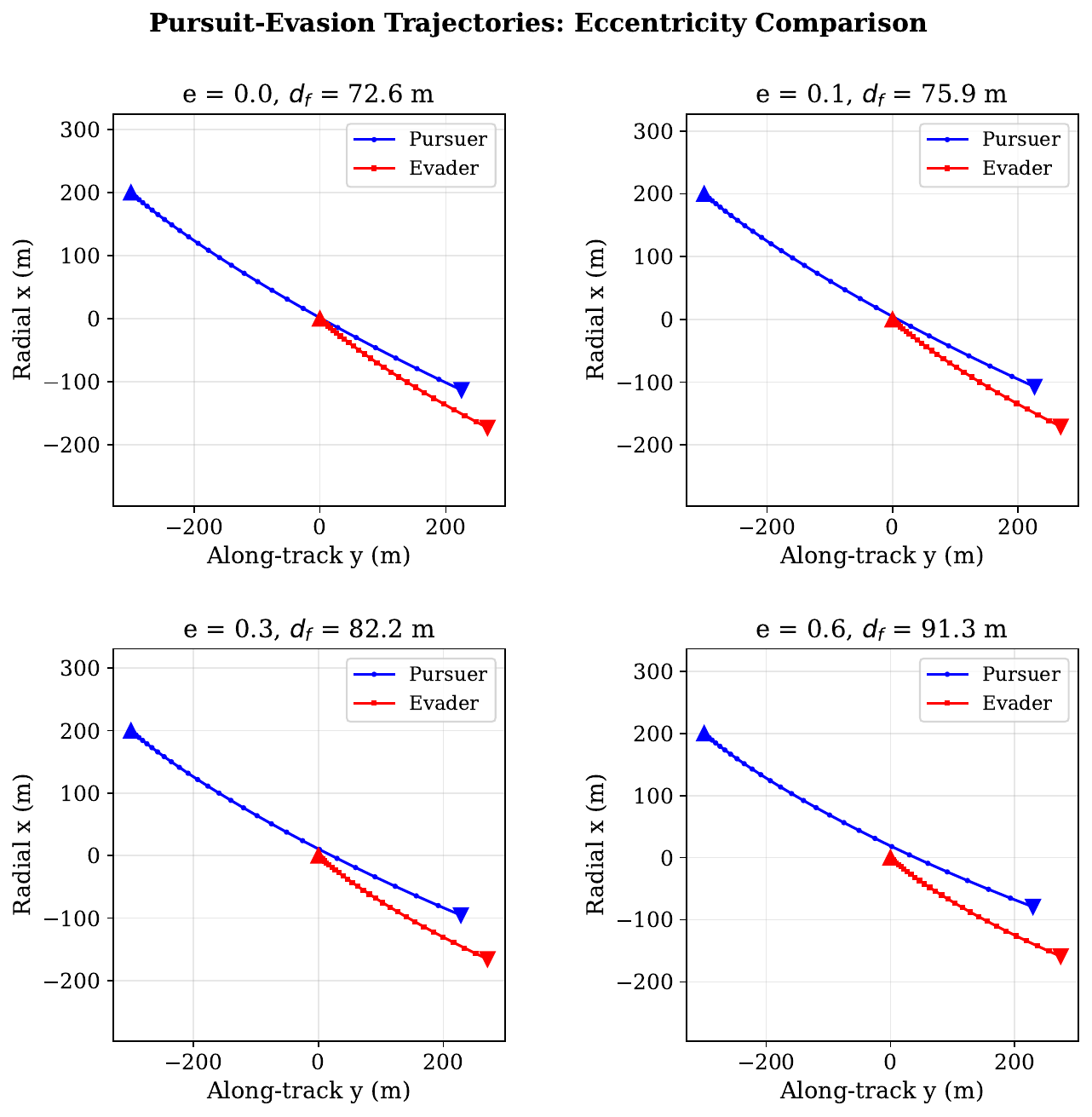}
\caption{Case~A trajectories at $e \in \{0, 0.1, 0.3, 0.6\}$ with
  $\nu_0 = 0$ (periapsis) and a fixed $500$~\si{\kilo\meter} perigee.
  Higher eccentricity modestly widens the target's along-track escape
  under the same thrust ratio and geometry.}
\label{fig:eccentricity_traj}
\end{figure}

\subsection{Validation Against Nonlinear Dynamics}\label{sec:hcw_recovery}

Two checks confirm that the elliptical results are properties of the
dynamics, not of the linear model.  First, the YA implementation
reduces to constant-coefficient HCW at $e = 0$: the $A$-matrix and
free-drift errors are at machine precision and the Simpson-rule
$B$-error is of order $10^{-9}$ (\cref{tab:ya_validation}).  Second,
and more importantly, the phase dependence survives nonlinear
propagation.  Replaying each YA extragradient control pair through a
full nonlinear two-body (and two-body-plus-$J_2$) integration of both
spacecraft, the terminal distances match the linear values to within
$0.011$~\si{\meter} for two-body truth and $1.1$~\si{\meter} with $J_2$,
and the factor-of-$1.7$ window at $e = 0.6$ is reproduced to three
significant figures.  Free-drift YA propagation likewise agrees with
independent Keplerian propagation to within $4.1$~\si{\milli\meter}
($0.001\%$ of separation) over $300$~\si{\second}.  The
orbital-phase effect is therefore real, not a linearization artifact.

\begin{table}[htbp]
\centering\footnotesize
\caption{YA-STM validation: relative errors at $e = 0$ against the
  HCW reference, plus structural checks.}
\label{tab:ya_validation}
\begin{tabular}{@{}lr@{}}
\toprule
Check & Relative error \\
\midrule
$\|A_{\mathrm{HCW}} - A_{\mathrm{YA}}(e\!=\!0)\| / \|A_{\mathrm{HCW}}\|$ & $7.96 \times 10^{-15}$ \\
$\|B_{\mathrm{HCW}} - B_{\mathrm{YA}}(e\!=\!0)\| / \|B_{\mathrm{HCW}}\|$ & $1.86 \times 10^{-9}$ \\
Free-drift trajectory (30 steps) & $2.22 \times 10^{-15}$ \\
STM composition $\|\Phi_{20} - \Phi_{21}\Phi_{10}\|$ & $1.05 \times 10^{-14}$ \\
\bottomrule
\end{tabular}
\end{table}

\section{Discussion}\label{sec:discussion}

\subsection{What the Framework Guarantees, and What It Does Not}

Exact are the two one-sided certificates of
\cref{sec:capture_cert,sec:security}: they assume nothing about the
opponent's rationality, carry over unchanged to the elliptical case,
and survive nonlinear propagation.  Not guaranteed is a pure open-loop
equilibrium, since the payoff is
convex in both players (\cref{prop:curvature}), so the extragradient
strategy pair is a certified-good pursuer strategy with a plausible
evader reply, not a Nash point.  Linear keep-out half-spaces and
$\ell_2$-bounded thrust stay inside the tractable envelope; minimum-time
intercept, hard $\Delta V$ caps, and integer impulsive control leave it
and would require HJI or successive convexification.

\subsection{Limitations}\label{sec:limitations}

Four assumptions bound the practical reach of the results, and the
measurements above let us be specific about each.  \emph{Navigation.}
Both players are assumed to know the relative state.  In flight,
cooperative GNSS-based navigation reaches decimetre and
millimetre-per-second
accuracy~\cite{DAmico2012PRISMA}, at which the outcome is unaffected
(\cref{sec:solver_char}); angles-only navigation of a non-cooperative
target is metre-level and roughly ten percent of range along the
boresight~\cite{Ardaens2018AnglesOnly,Gaias2018AVANTI}, which
measurably lowers the inspector's capture rate.  A minimax-filtering
or robust-reachable-set extension~\cite{LiZhang2026RobustPE} that
inflates the reachable sets by the navigation covariance is the
natural next step and preserves the convex structure.  \emph{Actuation.}
The continuous box-bounded model abstracts away minimum impulse bits
and finite slew; a $100$~\si{\kilo\gram}-class inspector with
$1$~\si{\newton} thrusters, as flown on
PRISMA~\cite{Bodin2009PRISMA}, has impulse quantization of a few
micrometres per second per pulse, well below the navigation noise, and
pulse-width modulation reproduces the continuous plan to good
fidelity~\cite{BernelliZazzera1992PWM,Vazquez2017PWMMPC}.
\emph{Perturbations.}  Over $300$--$600$~\si{\second} at sub-kilometre
separation the differential $J_2$, drag, and third-body accelerations
are orders of magnitude below the control
authority~\cite{Schweighart2002J2}, consistent with the sub-percent
linearization error of \cref{sec:nonlinear_validation}.  \emph{Capture
radius.}  The $50$~\si{\meter} radius is an approach gate rather than a
docking tolerance; standard RPO practice reserves keep-out spheres of
this scale around a resident space object~\cite{Fehse2003}, and the
$r_{\mathrm{cap}}$ sweep of \cref{sec:design_implications} shows how the
design rules tighten toward docking-class radii.  Finally, the zero-sum
structure does not model cooperative or general-sum interactions,
which lie outside the present scope.

\emph{Closed-loop play.}  The receding-horizon wrapper of
\cref{sec:tradespace} is an empirical extension.  We prove neither
recursive feasibility nor convergence of the closed-loop play to an
equilibrium, and we claim neither; what is available online is the
per-step reachable-set screen, which tells an operator before committing
a control whether the next open-loop subproblem is solvable.  The gap is
shared by the current model-predictive game literature, including the
model-predictive Stackelberg solution of Liu et
al.~\cite{Liu2025MPStackelberg} and the closed-loop elliptical pursuit
law of Jia et al.~\cite{Jia2025ClosedLoop}.  The most promising route to
a genuine guarantee is a passive-safety construction of the kind Elango
et al.~\cite{Elango2025PassiveSafe} develop for rendezvous, in which
every committed control leaves behind an abort trajectory that stays
safe, so feasibility is inherited from step to step by construction.
Establishing that for a two-player game, where the abort must hold
against an adversarial target rather than a passive one, is open.

\emph{Operational constraints.}  The only constraint on either player
here is the thrust box.  Real inspections also run under illumination,
communication, and sensing limits, and it is worth stating which of
these the present machinery absorbs and which it does not, because the
answer is not uniform.  Three classes cost nothing structurally.
Relative-velocity and range limits are norms of affine functions of the
control sequence, hence second-order cone
constraints~\cite{Malyuta2022}.  A line-of-sight or boresight
requirement, that the relative position stay within a half-angle
$\theta$ of a fixed direction $\bm{b}$, is the cone
$\norm{\pos_k}_2 \cos\theta \leq \bm{b}^\top \pos_k$, again second-order
cone representable, and the same form covers a Sun phase-angle
\emph{upper} bound for illumination.  Should the target tumble,
$\bm{b}$ becomes a known function of time and the cone rotates with it,
which leaves each step convex but ties the admissible set to the
attitude history; reachability under exactly that rotating constraint
is treated in~\cite{iskender2026reachability}.  Communication and eclipse windows
restrict which steps may thrust or must coast, which is a linear
equality on the affected components.  Each of these keeps every
subproblem convex and can be added to the solver as it stands.  Two
classes do not.  Exclusion constraints -- keep-out zones, solar or
sensor blinding cones, minimum-range holds -- are complements of convex
sets and therefore nonconvex; they need the successive-convexification
treatment already used for keep-out zones in \cref{sec:obstacle}, which
buys exact constraint satisfaction at convergence and gives up the
one-shot solve.  And constraints tied to the inspector's own attitude
couple translation to rotation, which the planar or six-state
translational model does not carry at all.

The certificates deserve a separate word, since they are the paper's
rigorous content.  Their $O(n_u)$ closed form is the box support
function of \cref{sec:support}.  Adding any of the
convex constraints above shrinks a player's admissible set, so its
support values must be recomputed as
$\support(\bm{d}) = \max\{\bm{d}^\top \pos : \pos \text{ admissible}\}$,
a small convex program instead of a closed form.  The exactness of
\cref{thm:escape} survives that substitution unchanged, because the
proof needs only that $\support$ be the true maximum over the admissible
set; what is lost is the $O(n_u)$ evaluation, and with it the ability to
screen thousands of geometries in the time the full game takes once.
Which way the bounds move is also predictable: shrinking the target's
admissible set raises $\phi_N$ and weakens its escape guarantee, while
shrinking the inspector's set lowers $\phi_N$ and raises the security
value, so each player's operational constraints tighten its opponent's
certificate and loosen its own.

The framework also addresses only the translational approach.  What
follows a successful approach (capture or servicing of the target) is
a separate problem, spanning cable-driven capture
mechanisms~\cite{Liu2024CableDriven} and learning-based control of
free-floating servicing manipulators~\cite{Adde2026PSORLSMC}.
Coordinating the approach with the inspector's own attitude, for which
constrained and finite-time pointing control is well
developed~\cite{Li2025TimeSynchronized,Wang2025AntiUnwinding}, is
likewise future work, as is fault-tolerant guidance that must react to
actuator failures mid-approach, a capability demonstrated for
terminal-phase vision-based landing in adjacent aerospace
domains~\cite{Khaneghaei2025FaultyUAV} but not covered by the present
open-loop certificates.

\section{Conclusions}\label{sec:conclusions}

Terminal-distance orbital pursuit--evasion is decided by the geometry
of the two players' reachable sets, and that geometry gives two exact
guarantees, an escape certificate and a pursuer security bound, that
hold for any admissible opponent on both circular and elliptical
reference orbits.  A projected extragradient method supplies a concrete
strategy pair in about $25$~\si{\milli\second}, certified in place by
its best-response gaps, and the escape certificate predicted every one
of $200$ perturbed outcomes without error.  The framework is honest
about its edges: it certifies one-sided guarantees rather than a saddle
point, its accuracy is set by the relative navigation available against
a non-cooperative target, and on eccentric orbits engagement timing
should be budgeted in orbital arc.  Three questions remain open.  Can
the receding-horizon and multi-pursuer variants be given closed-loop
guarantees?  Does inflating the reachable sets by a navigation
covariance yield a robust certificate?  And how far do the exact
guarantees extend once $J_2$ and drag are folded into the time-varying
linearization?

\section*{Reproducibility Statement}

All algorithms, scenarios, and data-generation scripts are available
as open-source Python code at
\url{https://github.com/iskender9961/Astrodynamics_Games}, with one
seeded entry point per experiment reported here and the complete
parameter specification of \cref{app:params}.  The implementation
depends only on NumPy, SciPy, CVXPY, OSQP, and Clarabel.

\section*{Acknowledgments}

The author acknowledges the support of Nanyang Technological University, Singapore.

\section*{Declaration of generative AI and AI-assisted technologies in the writing process}

During the preparation of this work the author used Anthropic Claude in
order to condense and copy-edit prose, to reorganize figure layout code,
and to cross-check the manuscript against the reviewers' comments.  No
generative tool was used to produce, analyze, or interpret the research
data: all numerical results, proofs, and conclusions are the author's
own and are reproducible from the code cited above.  After using this
tool the author reviewed and edited the content as needed and takes full
responsibility for the content of the publication.

\bibliographystyle{elsarticle-num}
\bibliography{refs}

\begin{thebibliography}{10}
\expandafter\ifx\csname url\endcsname\relax
  \def\url#1{\texttt{#1}}\fi
\expandafter\ifx\csname urlprefix\endcsname\relax\def\urlprefix{URL }\fi
\expandafter\ifx\csname href\endcsname\relax
  \def\href#1#2{#2} \def\path#1{#1}\fi

\bibitem{Burak2020}
O.~B. Iskender, Model predictive control for spacecraft rendezvous and docking
  with uncooperative targets, Ph.D. thesis, Nanyang Technological University,
  Singapore (2020).
\newblock \href {https://doi.org/10.32657/10356/144018}
  {\path{doi:10.32657/10356/144018}}.

\bibitem{Weeden2014}
B.~C. Weeden, V.~Samson, Global counterspace capabilities: An open source
  assessment, Secure World Foundation (2024).

\bibitem{Hobbs2020}
K.~L. Hobbs, M.~L. Perez, E.~A. Butcher, J.~M. Hinks, A survey of space domain
  awareness capabilities and research, Journal of the Astronautical Sciences
  67~(4) (2020) 1426--1457.

\bibitem{Long2026AutoML}
X.~Long, J.~Chen, L.~Yang, H.~Huang, An emergency scheduling method based on
  {AutoML} for space maneuver objective tracking, Expert Systems with
  Applications 298 (2026) 129759.
\newblock \href {https://doi.org/10.1016/j.eswa.2025.129759}
  {\path{doi:10.1016/j.eswa.2025.129759}}.

\bibitem{Cheng2025ast_review}
L.~Cheng, Z.~Wang, Y.~Song, F.~Jiang, Spacecraft intelligent orbital game
  technology: A review, Aerospace Science and Technology 161 (2025) 110158.
\newblock \href {https://doi.org/10.1016/j.ast.2025.110158}
  {\path{doi:10.1016/j.ast.2025.110158}}.

\bibitem{Isaacs1965}
R.~Isaacs, Differential Games: A Mathematical Theory with Applications to
  Warfare and Pursuit, Control and Optimization, John Wiley \& Sons, 1965.

\bibitem{Mitchell2005}
I.~M. Mitchell, A.~M. Bayen, C.~J. Tomlin, A time-dependent {Hamilton--Jacobi}
  formulation of reachable sets for continuous dynamic games, IEEE Transactions
  on Automatic Control 50~(7) (2005) 947--957.

\bibitem{Tomlin2000}
C.~J. Tomlin, J.~Lygeros, S.~S. Sastry, A game theoretic approach to controller
  design for hybrid systems, Proceedings of the IEEE 88~(7) (2000) 949--970.

\bibitem{Bansal2017}
S.~Bansal, M.~Chen, S.~Herbert, C.~J. Tomlin, {Hamilton--Jacobi} reachability:
  Some recent theoretical advances and applications in unmanned airspace
  management, Annual Review of Control, Robotics, and Autonomous Systems 4
  (2021) 253--279.

\bibitem{Li2023cja}
Q.~Li, Y.~Yuan, J.~Gao, S.~Zhang, Intelligent game-based pursuit-evasion
  strategy for spacecraft proximity operations, Chinese Journal of Aeronautics
  36~(10) (2023) 262--277.

\bibitem{Yang2025cja_rl_review}
K.~Yang, A.~Shen, N.~Xu, F.~Deng, M.~Lu, C.~Chen, A review of reinforcement
  learning approaches for pursuit-evasion games, Chinese Journal of Aeronautics
  38 (2025).
\newblock \href {https://doi.org/10.1016/j.cja.2025.103940}
  {\path{doi:10.1016/j.cja.2025.103940}}.

\bibitem{Clohessy1960}
W.~H. Clohessy, R.~S. Wiltshire, Terminal guidance system for satellite
  rendezvous, Journal of the Aerospace Sciences 27~(9) (1960) 653--658.

\bibitem{Yamanaka2002}
K.~Yamanaka, F.~Ankersen, New state transition matrix for relative motion on an
  arbitrary elliptical orbit, Journal of Guidance, Control, and Dynamics 25~(1)
  (2002) 60--66.

\bibitem{Althoff2021}
M.~Althoff, Set propagation techniques for reachability analysis, Annual Review
  of Control, Robotics, and Autonomous Systems 4 (2021) 369--395.

\bibitem{Girard2005}
A.~Girard, Reachability of uncertain linear systems using zonotopes, Hybrid
  Systems: Computation and Control (2005) 291--305.

\bibitem{Korpelevich1976}
G.~M. Korpelevich, The extragradient method for finding saddle points and other
  problems, Ekonomika i Matematicheskie Metody 12 (1976) 747--756.

\bibitem{Nemirovski2004}
A.~Nemirovski, Prox-method with rate of convergence {$O(1/t)$} for variational
  inequalities with {Lipschitz} continuous monotone operators and smooth
  convex-concave saddle point problems, SIAM Journal on Optimization 15~(1)
  (2004) 229--251.

\bibitem{Ho1965}
Y.~C. Ho, A.~E. Bryson, S.~Baron, Differential games and optimal
  pursuit-evasion strategies, IEEE Transactions on Automatic Control 10~(4)
  (1965) 385--389.

\bibitem{Guelman1990}
M.~Guelman, A.~Shinar, Optimal guidance law in the plane, in: AIAA Guidance,
  Navigation, and Control Conference, 1990, pp. 1--10.
\newblock \href {https://doi.org/10.2514/6.1990-3455}
  {\path{doi:10.2514/6.1990-3455}}.

\bibitem{Stupik2012}
J.~Stupik, M.~Pontani, B.~A. Conway, Optimal pursuit-evasion spacecraft
  maneuvers in low {Earth} orbit, Acta Astronautica 77 (2012) 91--104.

\bibitem{Pontani2009}
M.~Pontani, B.~A. Conway, Numerical solution of the three-dimensional orbital
  pursuit-evasion game, Journal of Guidance, Control, and Dynamics 32~(2)
  (2009) 474--487.
\newblock \href {https://doi.org/10.2514/1.37962} {\path{doi:10.2514/1.37962}}.

\bibitem{Jagat2017}
A.~Jagat, A.~J. Sinclair, Optimization of spacecraft pursuit-evasion game
  trajectories in the {Hill} reference frame, in: AIAA/AAS Astrodynamics
  Specialist Conference, 2017, pp. 1--15.
\newblock \href {https://doi.org/10.2514/6.2017-5232}
  {\path{doi:10.2514/6.2017-5232}}.

\bibitem{Woodford2023}
N.~Woodford, M.~Harris, P.~W. Bettinger, Game-theoretic analysis of orbital
  pursuit-evasion under impulsive thrust, Journal of Spacecraft and Rockets
  60~(3) (2023) 834--848.

\bibitem{Li2020}
Z.~Li, J.~Zhu, Orbital pursuit-evasion game with continuous low-thrust
  propulsion, Aerospace Science and Technology 97 (2020) 105670.

\bibitem{Ye2021}
D.~Ye, M.~Shi, Z.~Sun, Satellite proximate pursuit-evasion game with different
  thrust levels, Aerospace Science and Technology 111 (2021) 106568.

\bibitem{Liu2020ast_thrust}
Y.~Liu, Y.~Liu, J.~Liang, Satellite proximate pursuit--evasion game with
  different thrust configurations, Aerospace Science and Technology 96 (2020)
  105572.
\newblock \href {https://doi.org/10.1016/j.ast.2019.105572}
  {\path{doi:10.1016/j.ast.2019.105572}}.

\bibitem{Hafer2024}
W.~T. Hafer, K.~C. Howell, D.~C. Folta, Pursuit-evasion differential games in
  cislunar space, Journal of Guidance, Control, and Dynamics 47~(5) (2024)
  903--920.

\bibitem{Geng2023}
Y.~Geng, Z.~Chen, C.~Li, Reachable set based maneuver strategy for spacecraft
  proximity operations, Acta Astronautica 209 (2023) 68--82.

\bibitem{Cavalieri2023}
K.~A. Cavalieri, E.~S. Davis, L.~Singh, Game-theoretic control for space domain
  awareness and non-cooperative rendezvous, Journal of Spacecraft and Rockets
  60~(6) (2023) 1899--1913.

\bibitem{Ma2024ast_deltaV}
H.~Ma, G.~Zhang, Delta-v analysis for impulsive orbital pursuit--evasion based
  on reachable domain coverage, Aerospace Science and Technology 150 (2024)
  109243.
\newblock \href {https://doi.org/10.1016/j.ast.2024.109243}
  {\path{doi:10.1016/j.ast.2024.109243}}.

\bibitem{Huo2024ast_encirclement}
M.~Huo, J.~Liu, N.~Qi, S.~Yu, Research on proximity strategies for
  pursuit--evasion game with non-cooperative targets in space, Aerospace
  Science and Technology 155 (2024) 109677.
\newblock \href {https://doi.org/10.1016/j.ast.2024.109677}
  {\path{doi:10.1016/j.ast.2024.109677}}.

\bibitem{Pang2024PreciseGradient}
B.~Pang, C.~Wen, H.~Han, D.~Qiao, Solving pursuit/evasion game along elliptical
  orbit by providing precise gradient, Journal of Guidance, Control, and
  Dynamics 47~(4) (2024) 797--807.
\newblock \href {https://doi.org/10.2514/1.G007025}
  {\path{doi:10.2514/1.G007025}}.

\bibitem{Jia2025ClosedLoop}
Z.~Jia, D.~Ye, Y.~Xiao, Z.~Sun, Closed-loop strategy synthesis for real-time
  spacecraft pursuit--evasion games in elliptical orbits, IEEE Transactions on
  Aerospace and Electronic Systems 61~(4) (2025) 9071--9086.
\newblock \href {https://doi.org/10.1109/TAES.2025.3552066}
  {\path{doi:10.1109/TAES.2025.3552066}}.

\bibitem{Jia2025Reachability}
Z.~Jia, D.~Ye, Y.~Xiao, Z.~Sun, Approximate analytical approach for spacecraft
  pursuit--evasion game with reachability analysis, IEEE Transactions on
  Aerospace and Electronic Systems 61~(4) (2025) 9058--9070.
\newblock \href {https://doi.org/10.1109/TAES.2025.3552073}
  {\path{doi:10.1109/TAES.2025.3552073}}.

\bibitem{LiLuo2025Stackelberg}
Z.~Li, Y.~Luo, Orbital impulsive pursuit--evasion game formulation and
  {Stackelberg} equilibrium solutions, Journal of Spacecraft and Rockets 62~(2)
  (2025) 349--364.
\newblock \href {https://doi.org/10.2514/1.A35956}
  {\path{doi:10.2514/1.A35956}}.

\bibitem{Sun2025ThreePlayer}
S.~Sun, H.~Zhu, W.~Wang, Orbital three-player pursuit-evasion game, The Journal
  of the Astronautical Sciences 72~(3) (2025).
\newblock \href {https://doi.org/10.1007/s40295-025-00501-x}
  {\path{doi:10.1007/s40295-025-00501-x}}.

\bibitem{Zhao2025TwoOnOne}
L.~Zhao, Q.~Sun, Z.~Dang, Intelligent strategy resolution methods and mechanism
  analysis in two-on-one impulsive orbital pursuit--evasion games,
  Astrodynamics 9~(5) (2025) 727--751.
\newblock \href {https://doi.org/10.1007/s42064-025-0262-8}
  {\path{doi:10.1007/s42064-025-0262-8}}.

\bibitem{Fu2026TAD}
S.~Fu, S.~Gong, D.~Wu, P.~Shi, Analytical solutions and winning conditions of
  elliptic-orbit target-attacker-defender game, Journal of Guidance, Control,
  and Dynamics 49~(1) (2026) 257--268.
\newblock \href {https://doi.org/10.2514/1.G009187}
  {\path{doi:10.2514/1.G009187}}.

\bibitem{Yang2026WinningRegions}
Q.~Yang, Z.~Li, Y.-z. Luo, X.~Guo, Approximate analytical winning regions of
  the spacecraft in the coplanar pursuit--evasion--defense game, Journal of
  Spacecraft and Rockets 63~(3) (2026) 965--985.
\newblock \href {https://doi.org/10.2514/1.A36494}
  {\path{doi:10.2514/1.A36494}}.

\bibitem{Hu2024ast_multiagent}
C.~Hu, L.~Yang, Y.~Zhu, Impulsive maneuver strategy for multi-agent orbital
  pursuit--evasion game under sparse rewards, Aerospace Science and Technology
  155 (2024) 109599.
\newblock \href {https://doi.org/10.1016/j.ast.2024.109599}
  {\path{doi:10.1016/j.ast.2024.109599}}.

\bibitem{Huang2026ast_diverse}
S.~Huang, Diverse strategy generation for orbital pursuit--evasion games using
  distributed reinforcement learning, Aerospace Science and Technology 168
  (2026) 111052.
\newblock \href {https://doi.org/10.1016/j.ast.2025.111052}
  {\path{doi:10.1016/j.ast.2025.111052}}.

\bibitem{LiYe2022ast_switching}
Y.~Li, D.~Ye, Pursuit--evasion game switching strategies for spacecraft with
  incomplete information, Aerospace Science and Technology 119 (2022) 107200.
\newblock \href {https://doi.org/10.1016/j.ast.2021.107200}
  {\path{doi:10.1016/j.ast.2021.107200}}.

\bibitem{Wang2025LSTM}
H.~Wang, Y.~Zhang, S.~Bi, Game strategy prediction for spacecraft orbital
  pursuit--evasion game based on long short-term memory, Space: Science \&
  Technology 5 (2025) 0279.
\newblock \href {https://doi.org/10.34133/space.0279}
  {\path{doi:10.34133/space.0279}}.

\bibitem{Tschauner1965}
J.~Tschauner, P.~Hempel, Rendezvous zu einem in elliptischer {Bahn} umlaufenden
  {Ziel}, Astronautica Acta 11~(2) (1965) 104--109.

\bibitem{Carter1998}
T.~E. Carter, State transition matrices for terminal rendezvous studies: Brief
  survey and new example, Journal of Guidance, Control, and Dynamics 21~(1)
  (1998) 148--155.

\bibitem{Broucke2003}
R.~A. Broucke, Solution of the elliptic rendezvous problem with the time as
  independent variable, Journal of Guidance, Control, and Dynamics 26~(4)
  (2003) 615--621.

\bibitem{Inalhan2002}
G.~Inalhan, M.~Tillerson, J.~P. How, Relative dynamics and control of
  spacecraft formations in eccentric orbits, Journal of Guidance, Control, and
  Dynamics 25~(1) (2002) 48--59.

\bibitem{Gim2003}
D.-W. Gim, K.~T. Alfriend, State transition matrix of relative motion for the
  perturbed noncircular reference orbit, Journal of Guidance, Control, and
  Dynamics 26~(6) (2003) 956--971.

\bibitem{Vazquez2021ast_riccati}
R.~V{\'a}zquez, F.~Gavilan, E.~F. Camacho, Approximate analytic solution of
  nonlinear {Riccati} spacecraft formation flying dynamics in terms of orbit
  element differences, Aerospace Science and Technology 110 (2021) 106494.
\newblock \href {https://doi.org/10.1016/j.ast.2021.106494}
  {\path{doi:10.1016/j.ast.2021.106494}}.

\bibitem{Sullivan2017}
J.~Sullivan, S.~Grimberg, S.~{D'Amico}, Comprehensive survey and assessment of
  spacecraft relative motion dynamics models, Journal of Guidance, Control, and
  Dynamics 40~(4) (2017) 723--760.

\bibitem{Fu2025ast_keplerian}
S.~Fu, S.~Gong, P.~Shi, Analytical pursuit--evasion game strategy in arbitrary
  {Keplerian} reference orbits, Aerospace Science and Technology 158 (2025)
  109946.
\newblock \href {https://doi.org/10.1016/j.ast.2025.109946}
  {\path{doi:10.1016/j.ast.2025.109946}}.

\bibitem{Zhang2025ast_tfc}
C.~Zhang, Y.~Zhu, L.~Yang, A theory of functional connections-based method for
  orbital pursuit--evasion games with analytic satisfaction of rendezvous
  constraints, Aerospace Science and Technology 161 (2025) 110141.
\newblock \href {https://doi.org/10.1016/j.ast.2025.110141}
  {\path{doi:10.1016/j.ast.2025.110141}}.

\bibitem{Chen2018}
M.~Chen, C.~J. Tomlin, {Hamilton--Jacobi} reachability: Some recent theoretical
  advances and applications in unmanned airspace management, Annual Review of
  Control, Robotics, and Autonomous Systems 1 (2018) 333--358.

\bibitem{Fisac2015}
J.~F. Fisac, M.~Chen, C.~J. Tomlin, S.~S. Shankar, Reach-avoid problems with
  time-varying dynamics, targets and constraints, Hybrid Systems: Computation
  and Control (2015) 11--20.

\bibitem{Herbert2017}
S.~L. Herbert, M.~Chen, S.~Han, S.~Bansal, J.~F. Fisac, C.~J. Tomlin,
  {FaSTrack}: A modular framework for fast and guaranteed safe motion planning,
  in: IEEE Conference on Decision and Control, 2017, pp. 1517--1522.

\bibitem{Kurzhanski2000}
A.~B. Kurzhanski, P.~Varaiya, Ellipsoidal techniques for reachability analysis,
  in: Hybrid Systems: Computation and Control, 2000, pp. 202--214.

\bibitem{Shao2023cja_reachable}
L.~Shao, H.~Miao, R.~Hu, H.~Liu, Reachable set estimation for spacecraft
  relative motion based on bang-bang principle, Chinese Journal of Aeronautics
  36~(2) (2023) 229--240.
\newblock \href {https://doi.org/10.1016/j.cja.2022.07.003}
  {\path{doi:10.1016/j.cja.2022.07.003}}.

\bibitem{Zhang2025cja_reachable}
S.~Zhang, Z.~Yang, Y.~Luo, Spacecraft multi-impulse reachable domain, Chinese
  Journal of Aeronautics 38~(6) (2025).
\newblock \href {https://doi.org/10.1016/j.cja.2024.12.001}
  {\path{doi:10.1016/j.cja.2024.12.001}}.

\bibitem{Zhang2025TimeDependentRD}
S.~Zhang, Z.~Yang, Y.-Z. Luo, Time-dependent reachable domain and its
  application to impulsive orbital pursuit--evasion analysis, Journal of
  Spacecraft and Rockets 62~(2) (2025) 631--642.
\newblock \href {https://doi.org/10.2514/1.A36112}
  {\path{doi:10.2514/1.A36112}}.

\bibitem{Xu2026NashReachable}
W.~Xu, X.~Liu, Z.~Lu, B.~Hua, Y.~Wu, Numerical method for {Nash} equilibrium
  strategies of spacecraft orbit pursuit-evasion game based on continuous
  thrust reachable domain analysis, Science China Technological Sciences 69~(4)
  (2026).
\newblock \href {https://doi.org/10.1007/s11431-025-3186-8}
  {\path{doi:10.1007/s11431-025-3186-8}}.

\bibitem{iskender2026reachability}
O.~B. Iskender, K.~V. Ling, W.~S. Lim, E.~Lansard, Reachability-based
  safe-start regions for approach to a tumbling target with rotating {LOS}
  constraints, in: 77th International Astronautical Congress, IAC, Antalya,
  T\"{u}rkiye, 2026.

\bibitem{Rosen1965}
J.~B. Rosen, Existence and uniqueness of equilibrium points for concave
  {$N$}-person games, Econometrica 33~(3) (1965) 520--534.

\bibitem{vonNeumannMorgenstern1944}
J.~{von Neumann}, O.~Morgenstern, Theory of Games and Economic Behavior,
  Princeton University Press, 1944.

\bibitem{BoydVandenberghe2004}
S.~Boyd, L.~Vandenberghe, Convex Optimization, Cambridge University Press,
  2004.

\bibitem{OSQP2020}
B.~Stellato, G.~Banjac, P.~Goulart, A.~Bemporad, S.~Boyd, {OSQP}: An operator
  splitting solver for quadratic programs, Mathematical Programming Computation
  12~(4) (2020) 637--672.

\bibitem{Tseng1995}
P.~Tseng, On linear convergence of iterative methods for the variational
  inequality problem, Journal of Computational and Applied Mathematics 60~(1-2)
  (1995) 237--252.

\bibitem{Mokhtari2020}
A.~Mokhtari, A.~Ozdaglar, S.~Pattathil, A unified analysis of extra-gradient
  and optimistic gradient methods for saddle point problems: Proximal point
  approach, in: International Conference on Artificial Intelligence and
  Statistics (AISTATS), 2020, pp. 1497--1507.

\bibitem{Facchinei2003}
F.~Facchinei, J.-S. Pang, Finite-Dimensional Variational Inequalities and
  Complementarity Problems, Springer, 2003.

\bibitem{LeCleach2022ALGAMES}
S.~Le~Cleac'h, M.~Schwager, Z.~Manchester, {ALGAMES}: a fast augmented
  {Lagrangian} solver for constrained dynamic games, Autonomous Robots 46~(1)
  (2022) 201--215.
\newblock \href {https://doi.org/10.1007/s10514-021-10024-7}
  {\path{doi:10.1007/s10514-021-10024-7}}.

\bibitem{SoFan2023Epigraph}
O.~So, C.~Fan, Solving stabilize-avoid via epigraph form optimal control using
  deep reinforcement learning, in: Robotics: Science and Systems XIX, Daegu,
  Republic of Korea, 2023.
\newblock \href {https://doi.org/10.15607/RSS.2023.XIX.085}
  {\path{doi:10.15607/RSS.2023.XIX.085}}.

\bibitem{Malyuta2022}
D.~Malyuta, T.~P. Reynolds, M.~Szmuk, T.~Lew, R.~Bonalli, M.~Pavone,
  B.~A\c{c}{\i}kme\c{s}e, Convex optimization for trajectory generation: A
  tutorial, IEEE Control Systems Magazine 42~(5) (2022) 40--113.

\bibitem{AcikmeseaPloen2007}
B.~A\c{c}{\i}kme\c{s}e, S.~R. Ploen, Convex programming approach to powered
  descent guidance for {Mars} landing, Journal of Guidance, Control, and
  Dynamics 30~(5) (2007) 1353--1366.
\newblock \href {https://doi.org/10.2514/1.27553} {\path{doi:10.2514/1.27553}}.

\bibitem{Rawlings2017}
J.~B. Rawlings, D.~Q. Mayne, M.~M. Diehl, Model Predictive Control: Theory,
  Computation, and Design, 2nd Edition, Nob Hill Publishing, 2017.

\bibitem{Iskender2019}
O.~B. Iskender, K.-V. Ling, L.~Simonini, M.~Schlotterer, D.~Seelbinder,
  S.~Theil, J.~M. Maciejowski, Dual quaternion based autonomous rendezvous and
  docking via model predictive control, in: Proc. 70th Int. Astronautical
  Congr. Int. Astronautical Federation, 2019, pp. 1--16.

\bibitem{Mammarella2018ast_tubeMPC}
M.~Mammarella, E.~Capello, H.~Park, G.~Guglieri, M.~Romano, Tube-based robust
  model predictive control for spacecraft proximity operations in the presence
  of persistent disturbance, Aerospace Science and Technology 77 (2018)
  585--594.
\newblock \href {https://doi.org/10.1016/j.ast.2018.04.009}
  {\path{doi:10.1016/j.ast.2018.04.009}}.

\bibitem{Sanchez2020ast_NRHO}
J.~C. S{\'a}nchez, F.~Gavilan, R.~Vazquez, Chance-constrained model predictive
  control for {Near Rectilinear Halo Orbit} spacecraft rendezvous, Aerospace
  Science and Technology 100 (2020) 105827.
\newblock \href {https://doi.org/10.1016/j.ast.2020.105827}
  {\path{doi:10.1016/j.ast.2020.105827}}.

\bibitem{Oguri2024ChanceConstrained}
K.~Oguri, Chance-constrained control for safe spacecraft autonomy: Convex
  programming approach, in: Proceedings of the 2024 American Control Conference
  (ACC), 2024, pp. 2318--2324.
\newblock \href {https://doi.org/10.23919/ACC60939.2024.10645008}
  {\path{doi:10.23919/ACC60939.2024.10645008}}.

\bibitem{Hill1878}
G.~W. Hill, Researches in the lunar theory, American Journal of Mathematics
  1~(1) (1878) 5--26.

\bibitem{Richards2002}
A.~Richards, T.~Schouwenaars, J.~P. How, E.~Feron, Spacecraft trajectory
  planning with avoidance constraints using mixed-integer linear programming,
  Journal of Guidance, Control, and Dynamics 25~(4) (2002) 755--764.

\bibitem{Breeden2023RobustCBF}
J.~Breeden, D.~Panagou, Robust control barrier functions under high relative
  degree and input constraints for satellite trajectories, Automatica 155
  (2023) 111109.
\newblock \href {https://doi.org/10.1016/j.automatica.2023.111109}
  {\path{doi:10.1016/j.automatica.2023.111109}}.

\bibitem{vanWijk2024DCBF}
D.~van Wijk, K.~Dunlap, M.~Majji, K.~L. Hobbs, Safe spacecraft inspection via
  deep reinforcement learning and discrete control barrier functions, Journal
  of Aerospace Information Systems 21~(12) (2024) 996--1013.
\newblock \href {https://doi.org/10.2514/1.I011391}
  {\path{doi:10.2514/1.I011391}}.

\bibitem{CVXPY2016}
S.~Diamond, S.~Boyd, {CVXPY}: A {Python}-embedded modeling language for convex
  optimization, Journal of Machine Learning Research 17~(83) (2016) 1--5.

\bibitem{DAmico2012PRISMA}
S.~D'Amico, J.-S. Ardaens, R.~Larsson, Spaceborne autonomous formation-flying
  experiment on the {PRISMA} mission, Journal of Guidance, Control, and
  Dynamics 35~(3) (2012) 834--850.
\newblock \href {https://doi.org/10.2514/1.55638} {\path{doi:10.2514/1.55638}}.

\bibitem{Gaias2018AVANTI}
G.~Gaias, J.-S. Ardaens, Flight demonstration of autonomous noncooperative
  rendezvous in low {Earth} orbit, Journal of Guidance, Control, and Dynamics
  41~(6) (2018) 1337--1354.
\newblock \href {https://doi.org/10.2514/1.G003239}
  {\path{doi:10.2514/1.G003239}}.

\bibitem{Fehse2003}
W.~Fehse, Automated Rendezvous and Docking of Spacecraft, Cambridge University
  Press, Cambridge, UK, 2003.
\newblock \href {https://doi.org/10.1017/CBO9780511543388}
  {\path{doi:10.1017/CBO9780511543388}}.

\bibitem{Ardaens2018AnglesOnly}
J.-S. Ardaens, G.~Gaias, Flight demonstration of spaceborne real-time
  angles-only navigation to a noncooperative target in low {Earth} orbit, Acta
  Astronautica 153 (2018) 367--382.
\newblock \href {https://doi.org/10.1016/j.actaastro.2018.01.044}
  {\path{doi:10.1016/j.actaastro.2018.01.044}}.

\bibitem{LiZhang2026RobustPE}
Y.~Li, G.~Zhang, Robust trajectory optimization for pursuit-evasion game with
  navigation and control errors, Advances in Space Research 77~(2) (2026)
  2028--2042.
\newblock \href {https://doi.org/10.1016/j.asr.2025.10.058}
  {\path{doi:10.1016/j.asr.2025.10.058}}.

\bibitem{Bodin2009PRISMA}
P.~Bodin, R.~Larsson, F.~Nilsson, C.~Chasset, R.~Noteborn, M.~Nylund, {PRISMA}:
  An in-orbit test bed for guidance, navigation, and control experiments,
  Journal of Spacecraft and Rockets 46~(3) (2009) 615--623.
\newblock \href {https://doi.org/10.2514/1.40161} {\path{doi:10.2514/1.40161}}.

\bibitem{BernelliZazzera1992PWM}
F.~Bernelli-Zazzera, P.~Mantegazza, Pulse-width equivalent to pulse-amplitude
  discrete control of linear systems, Journal of Guidance, Control, and
  Dynamics 15~(2) (1992) 461--467.
\newblock \href {https://doi.org/10.2514/3.20858} {\path{doi:10.2514/3.20858}}.

\bibitem{Vazquez2017PWMMPC}
R.~Vazquez, F.~Gavilan, E.~F. Camacho, Pulse-width predictive control for {LTV}
  systems with application to spacecraft rendezvous, Control Engineering
  Practice 60 (2017) 199--210.
\newblock \href {https://doi.org/10.1016/j.conengprac.2016.06.017}
  {\path{doi:10.1016/j.conengprac.2016.06.017}}.

\bibitem{Schweighart2002J2}
S.~A. Schweighart, R.~J. Sedwick, High-fidelity linearized {J2} model for
  satellite formation flight, Journal of Guidance, Control, and Dynamics 25~(6)
  (2002) 1073--1080.
\newblock \href {https://doi.org/10.2514/2.4986} {\path{doi:10.2514/2.4986}}.

\bibitem{Liu2025MPStackelberg}
Y.~Liu, C.~Li, J.~Jiang, Y.~Zhang, A model predictive {Stackelberg} solution to
  orbital pursuit-evasion game, Chinese Journal of Aeronautics 38~(2) (2025)
  103198.
\newblock \href {https://doi.org/10.1016/j.cja.2024.08.029}
  {\path{doi:10.1016/j.cja.2024.08.029}}.

\bibitem{Elango2025PassiveSafe}
P.~Elango, A.~P. Vinod, K.~Kitamura, B.~A{\c{c}}{\i}kme{\c{s}}e, S.~Di~Cairano,
  A.~Weiss, Successive convexification for passively-safe spacecraft rendezvous
  on {Near Rectilinear Halo Orbit}, arXiv preprint arXiv:2505.17251, under
  review (2025).

\bibitem{Liu2024CableDriven}
R.~Liu, Y.~Fan, Y.~Huang, H.~Guo, C.~Zhao, M.~Su, Design and workspace analysis
  of a cable-driven space capture robot for noncooperative targets, Proceedings
  of the Institution of Mechanical Engineers, Part G: Journal of Aerospace
  Engineering 238~(14) (2024) 1406--1418.
\newblock \href {https://doi.org/10.1177/09544100241272826}
  {\path{doi:10.1177/09544100241272826}}.

\bibitem{Adde2026PSORLSMC}
Y.~A. Adde, Y.~N. Razoumny, A.~A. Betelie, T.~K. Mohammed, Y.~A. Wendemagegn,
  C.~M. Abdissa, Intelligent hybrid control for free-floating space robots:
  {PSO}--{RL}--{SMC} with inverse-dynamics tracking, IEEE Access 14 (2026)
  27137--27157.
\newblock \href {https://doi.org/10.1109/ACCESS.2026.3661861}
  {\path{doi:10.1109/ACCESS.2026.3661861}}.

\bibitem{Li2025TimeSynchronized}
D.~Li, S.~Tong, H.~Yang, Q.~Hu, Time-synchronized control for spacecraft
  reorientation with time-varying constraints, IEEE/ASME Transactions on
  Mechatronics 30~(3) (2025) 2073--2083.
\newblock \href {https://doi.org/10.1109/TMECH.2024.3430953}
  {\path{doi:10.1109/TMECH.2024.3430953}}.

\bibitem{Wang2025AntiUnwinding}
G.~Wang, Z.~Feng, Y.~Qu, H.~Sun, Event-triggered adaptive predefined-time
  anti-unwinding attitude tracking control for spacecraft, PLOS ONE 20~(10)
  (2025) e0333700.
\newblock \href {https://doi.org/10.1371/journal.pone.0333700}
  {\path{doi:10.1371/journal.pone.0333700}}.

\bibitem{Khaneghaei2025FaultyUAV}
M.~Khaneghaei, D.~Asadi, M.~Zahmatkesh, {\"O}.~Tutsoy, An experimental
  vision-based integrated guidance and control strategy for autonomous landing
  of a faulty {UAV}, Proceedings of the Institution of Mechanical Engineers,
  Part I: Journal of Systems and Control Engineering 239~(9) (2025) 1700--1716.
\newblock \href {https://doi.org/10.1177/09596518251339288}
  {\path{doi:10.1177/09596518251339288}}.

\end{thebibliography}

\clearpage
\appendix
\setcounter{table}{0}
\setcounter{figure}{0}
\renewcommand{\thetable}{\Alph{section}.\arabic{table}}
\renewcommand{\thefigure}{\Alph{section}.\arabic{figure}}
\makeatletter
\@addtoreset{table}{section}
\@addtoreset{figure}{section}
\makeatother

\section{Parameter Tables}\label{app:params}

\Cref{tab:params,tab:solver_params,tab:scenario_ics} specify the
complete set of parameters used in all numerical experiments.

\begin{table}[htbp]
\centering\footnotesize
\caption{Reference orbit, simulation and game parameters.  Values apply
  to every experiment unless a section states otherwise.}
\label{tab:params}
\begin{tabular}{@{}llrl@{}}
\toprule
Parameter & Symbol & Value & Unit / notes \\
\midrule
Semi-major axis          & $a$   & 6871    & km \\
Altitude (approx.)       & --    & 500     & km \\
Gravitational parameter  & $\mu$ & $3.986 \times 10^{14}$ & m$^3$/s$^2$ \\
Mean motion              & $n$   & $1.131 \times 10^{-3}$ & rad/s \\
Orbital period           & $T$   & 5556    & s \\
\midrule
Timestep                      & $\Delta t$         & 10.0   & s \\
Horizon (steps / time)        & $N$, $N \Delta t$  & 30, 300 & --, s \\
Position / state / control dim. & $n_p, n_x, n_u$  & 2, 4, 2 & -- \\
Inspector max acceleration    & $\bar{u}_P$        & 10.0   & mm/s$^2$ \\
Target max acceleration       & $\bar{u}_E$        & 5.0    & mm/s$^2$ \\
Inspector/target thrust ratio & $\bar{u}_P/\bar{u}_E$ & 2.0 & -- \\
Control norm type             & --                 & \multicolumn{2}{l}{Box ($\ell_\infty$)} \\
Capture radius                & $r_{\mathrm{cap}}$ & 50.0   & m \\
Effort regularization         & $\lambda$          & $10^{-3}$ & -- \\
Keep-out penalty weight       & $\kappa$           & $10^{3}$  & -- \\
Random seed                   & --                 & 42     & -- \\
\bottomrule
\end{tabular}
\end{table}

\begin{table}[htbp]
\centering\footnotesize
\caption{Solver, reachable-set and Monte Carlo parameters.  Monte Carlo
  capture is scored under the absorbing first-passage convention
  (\cref{app:mc}).}
\label{tab:solver_params}
\begin{tabular}{@{}llrl@{}}
\toprule
Parameter & Symbol & Value & Unit / notes \\
\midrule
\multicolumn{4}{@{}l}{\textit{IBR solver}} \\
Max iterations         & $K_{\mathrm{IBR}}^{\max}$ & 20   & Outer loop \\
Convergence tol.       & $\varepsilon_{\mathrm{IBR}}$ & $10^{-4}$ & On $|\Delta J|$ \\
OSQP iter.\ / abs.\ / rel.\ tol. & -- & \multicolumn{2}{l}{50\,000, $10^{-5}$, $10^{-5}$; also nominal MPC} \\
\midrule
\multicolumn{4}{@{}l}{\textit{Extragradient solver}} \\
Max iterations         & $K_{\mathrm{SP}}^{\max}$ & 200  & -- \\
Default step size      & $\eta_0$ & 0.01 & Before adaptation \\
Convergence tol.       & $\varepsilon_{\mathrm{SP}}$ & $10^{-4}$ & Relative change \\
Step size adapt.       & --   & \multicolumn{2}{l}{$\eta\!=\!\min(\eta_0, 0.5/(\|G\|^2\!+\!\lambda))$} \\
\midrule
\multicolumn{4}{@{}l}{\textit{Reachable sets and Monte Carlo}} \\
Template directions          & $L$  & 48   & 2D uniform, position space \\
Number of trials             & $N_{\mathrm{MC}}$ & 200  & Uniform perturbations \\
Inspector pos.\ / vel.\ range & $\Delta r, \Delta v$ & $\pm 50$, $\pm 0.5$ & m, m/s \\
Target pos.\ / vel.\ range & $\Delta r_E, \Delta v_E$ & $\pm 20$, $\pm 0.2$ & m, m/s \\
\bottomrule
\end{tabular}
\end{table}

\begin{table}[htbp]
\centering\footnotesize
\caption{Initial conditions and keep-out zones.  Positions in meters,
  velocities in m/s; the state vector is
  $\state_0 = [x, y, \dot{x}, \dot{y}]^\top$ and all agents start at
  rest.  The three keep-out balls apply to Case~B.}
\label{tab:scenario_ics}
\begin{tabular}{@{}llrrr@{}}
\toprule
Scenario & Agent & $x_0$ (m) & $y_0$ (m) & Radius (m) \\
\midrule
\multirow{2}{*}{Case A} & Inspector & 200  & $-300$ & -- \\
                        & Target    & 0    & 0      & -- \\
\midrule
\multirow{2}{*}{Case B} & Inspector & 300  & $-400$ & -- \\
                        & Target    & 0    & 0      & -- \\
\midrule
\multirow{3}{*}{Case C (2P)} & Inspector 1 & 300  & $-200$ & -- \\
                              & Inspector 2 & $-200$ & $-350$ & -- \\
                              & Target      & 0    & 0      & -- \\
\midrule
\multirow{4}{*}{Case C (3P)} & Inspector 1 & 300  & $-200$ & -- \\
                              & Inspector 2 & $-200$ & $-350$ & -- \\
                              & Inspector 3 & $-100$ & 300    & -- \\
                              & Target      & 0    & 0      & -- \\
\midrule
\multirow{3}{*}{Keep-out zones} & $\mathcal{O}_1$ & 100 & $-100$ & 60 \\
                              & $\mathcal{O}_2$ & $-50$ & $-200$ & 50 \\
                              & $\mathcal{O}_3$ & 150 & 100     & 55 \\
\bottomrule
\end{tabular}
\end{table}

The zero-order-hold discretization was checked against an RK45
continuous-integration reference ($\Delta t = 0.1$~s, 300~s horizon).
At the $\Delta t = 10$~s used throughout, the terminal position error
is $3.0 \times 10^{-2}$~m, a relative error below $0.01\%$ and well
inside the modeling error of the HCW linearization itself; the error
scales as $O(\Delta t^{2})$, reaching $7.4 \times 10^{-1}$~m
($0.24\%$) only at $\Delta t = 50$~s.

\section{Iterative Best Response Pseudocode}\label{app:algorithms}

\begin{algorithm}[!htbp]
\caption{Iterative Best Response for Two-Player PE Game}
\label{alg:ibr}
\small
\begin{algorithmic}[1]
\REQUIRE Initial states $\state_0^{(P)}, \state_0^{(E)}$; dynamics $A_d, B_d$;
  bounds $\bar{u}_P, \bar{u}_E$; horizon $N$; tolerance $\varepsilon$;
  max iterations $K_{\max}$
\ENSURE Control sequences $\bm{U}^{(P)*}, \bm{U}^{(E)*}$; trajectories
  $\bm{X}^{(P)*}, \bm{X}^{(E)*}$
\STATE Initialize $\bm{U}^{(E)} \leftarrow \bm{0}$
\STATE Propagate $\bm{X}^{(E)} \leftarrow \text{Propagate}(\state_0^{(E)}, \bm{U}^{(E)}, A_d, B_d)$
\STATE $J_{\mathrm{prev}} \leftarrow \infty$
\FOR{$t = 1, 2, \ldots, K_{\max}$}
  \STATE \COMMENT{Pursuer best response}
  \STATE $\bm{U}^{(P)} \leftarrow \argmin_{\|\ctrl\|_\infty \leq \bar{u}_P}
    \|\pos_N^{(P)} - \pos_N^{(E)}\|^2 + \lambda \|\bm{U}^{(P)}\|^2$
    \hfill (QP via OSQP)
  \STATE $\bm{X}^{(P)} \leftarrow \text{Propagate}(\state_0^{(P)}, \bm{U}^{(P)}, A_d, B_d)$
  \STATE \COMMENT{Evader best response}
  \STATE Compute escape direction $\bm{d} \leftarrow (\pos_N^{(E)} - \pos_N^{(P)})/\norm{\pos_N^{(E)} - \pos_N^{(P)}}$
    \hfill ($\pos_N^{(E)}$: current-iterate terminal position)
  \IF{$\norm{\pos_N^{(E)} - \pos_N^{(P)}} < \varepsilon_{\bm{d}}$}
    \STATE $\bm{d} \leftarrow [0,\, 1]^\top$
      \hfill (along-track fallback, $\varepsilon_{\bm{d}} = 10^{-3}$~m)
  \ENDIF
  \STATE $\bm{U}^{(E)} \leftarrow \argmax_{\|\ctrl\|_\infty \leq \bar{u}_E}
    \bm{d}^\top \pos_N^{(E)} - \lambda \|\bm{U}^{(E)}\|^2$
    \hfill (QP via OSQP)
  \STATE $\bm{X}^{(E)} \leftarrow \text{Propagate}(\state_0^{(E)}, \bm{U}^{(E)}, A_d, B_d)$
  \STATE \COMMENT{Convergence check}
  \STATE $J \leftarrow \|\pos_N^{(P)} - \pos_N^{(E)}\|$
  \IF{$|J - J_{\mathrm{prev}}| < \varepsilon$}
    \RETURN $\bm{U}^{(P)}, \bm{U}^{(E)}, \bm{X}^{(P)}, \bm{X}^{(E)}$, ``converged''
  \ENDIF
  \STATE $J_{\mathrm{prev}} \leftarrow J$
\ENDFOR
\RETURN $\bm{U}^{(P)}, \bm{U}^{(E)}, \bm{X}^{(P)}, \bm{X}^{(E)}$, ``max\_iter''
\end{algorithmic}
\end{algorithm}

\noindent
The escape direction is taken from the evader's \emph{current}
trajectory iterate: $\pos_N^{(E)}$ is the terminal position of the
propagated evader trajectory at the start of the iteration, which on
the first pass is the free-drift terminal position because
$\bm{U}^{(E)}$ is initialized to zero.  This matches the
implementation.  When the
two terminal positions nearly coincide, the direction is ill-defined
and the solver defaults to the along-track axis $[0,1]^\top$.  The
pursuer best response and the convergence check are unchanged.

The remaining two procedures need no pseudocode, being direct
transcriptions of equations in the main text.  The extragradient solver
precomputes $G_P, G_E, \bm{g}_0$ from \cref{eq:traj_affine,eq:delta}
and the step size $\eta = \min(\eta_0, 0.5/(\norm{G}_2^2 + \lambda))$,
starts from $\bm{U}^{(P)} = \bm{U}^{(E)} = \bm{0}$, and then alternates
\cref{eq:eg_extrap,eq:eg_update} with the gradients
\cref{eq:grad_p,eq:grad_e}, the projection $\Pi_{\mathcal{Z}}$ being
element-wise clipping to $[-\bar{u}_j, \bar{u}_j]$; it terminates when
the relative change in $J$ \cref{eq:payoff} falls below
$\varepsilon_{\mathrm{SP}}$ or at $K_{\mathrm{SP}}^{\max}$ iterations
(\cref{tab:solver_params}).  Reachable-set propagation evaluates
\cref{eq:sf_reach} along each template direction, accumulating the box
support value $\bar{u}\norm{\cdot}_1$ over the horizon, and in the
planar case intersects consecutive half-planes \cref{eq:polytope} to
recover the polytope vertices.

\section{Yamanaka--Ankersen State Transition Matrix}\label{app:ya_stm}

This appendix records only what is needed to reproduce our
implementation of the Yamanaka--Ankersen (YA) state transition matrix
of \cref{sec:elliptical}.  The underlying Tschauner--Hempel equations
for relative motion about an elliptical chief, written with true
anomaly $\nu$ as the independent variable and scaled by
$\rho(\nu) = 1 + e\cos\nu$, are standard~\cite{Tschauner1965}, as is
the YA derivation itself~\cite{Yamanaka2002}; neither is reproduced
here.  What matters for the LTV construction is the auxiliary
integral
\begin{equation}\label{eq:j_int_full}
\mathcal{J}(\nu,\nu_0) = \frac{h}{p^2}(t - t_0)
  = \int_{\nu_0}^{\nu} \frac{d\nu'}{\rho(\nu')^2}
  = \frac{M(\nu) - M(\nu_0)}{(1-e^2)^{3/2}},
\end{equation}
where $h = \sqrt{\mu p}$, $p = a(1-e^2)$, and
$M = \mathcal{E} - e\sin \mathcal{E}$ is
the mean anomaly obtained from $\nu$ via the eccentric anomaly
$\mathcal{E}$ (written thus so that $E$ remains the evader label).
The integral $\mathcal{J}$ removes the singular near-circular behavior of
earlier closed-form
solutions~\cite{Carter1998,Broucke2003} and is evaluated efficiently
through Kepler's equation.

The closed-form STM factors as $\Phi(\nu,\nu_0) =
\tilde{\Phi}(\nu)\,\tilde{\Phi}^{-1}(\nu_0)$, with $\tilde{\Phi}$ the
fundamental solution matrix of the homogeneous T-H system; its entries
are polynomials in $\rho(\nu)$, $\sin\nu$, $\cos\nu$ and
$\mathcal{J}$, and are tabulated in~\cite{Yamanaka2002}.  After
permuting from the T-H ordering to the HCW convention and correcting
the along-track sign, $\Phi(\nu,\nu_0)$ enters the discrete LTV model
\cref{eq:ltv_discrete} as $A_k = \Phi(\nu_{k+1},\nu_k)$, while
\begin{equation}\label{eq:bk_simpson}
B_k \approx \frac{\Delta t}{6}\left(
  \Phi(\nu_{k+1},\nu_k)\,\hat{B}
  + 4\,\Phi(\nu_{k+1},\nu_m)\,\hat{B}
  + \hat{B}\right),
\end{equation}
with $\hat{B} = [0;\; I_{n_u}]$ and $\nu_m$ the midpoint anomaly.  This
quadrature is $O(\Delta t^5)$ accurate; at $\Delta t = 10$~s the $B_k$
entries match the reference to $1.86 \times 10^{-9}$, and the
HCW-recovery and composition checks are in
\cref{sec:hcw_recovery,tab:ya_validation}.

The discrete matrices $A_k, B_k$ are time-varying because the
underlying flow is anomaly-driven.  Four distinct mechanisms produce
the variation, and all four collapse at $e = 0$, where $\rho \equiv 1$,
$\nu = n(t - t_0)$ and $\mathcal{J} = \nu - \nu_0$, returning the
constant HCW matrices $A_d, B_d$:
\begin{enumerate}
\item $\rho(\nu) = 1 + e\cos\nu$ enters both the fundamental solutions
  $\tilde{\Phi}$ and the coordinate scaling $T(\nu)$ that converts
  between physical and T-H variables, so $A_k = \Phi(\nu_{k+1},\nu_k)$
  depends on the true anomaly at \emph{both} endpoints of the step, not
  on the elapsed time alone.
\item The clock-to-anomaly map $\nu(t)$, obtained by inverting Kepler's
  equation, is nonlinear: a uniform time grid maps to a non-uniform
  anomaly grid that advances fastest near periapsis, so equal $\Delta t$
  steps span unequal $\Delta\nu$.
\item The propagation carries a secular term proportional to
  $\mathcal{J}(\nu_{k+1},\nu_k) \propto \Delta t$, which accumulates
  linearly with elapsed time.
\item The control matrix $B_k$ from the Simpson quadrature is modulated
  by $\rho$ along the arc, since $\hat{B}$ is mapped through
  $\Phi(\cdot,\nu)$ evaluated at anomaly-dependent nodes.
\end{enumerate}

\section{Regularization Sensitivity}\label{app:ablation}

\Cref{tab:ablation_lambda} gives the full sweep behind the
$\lambda$-invariance reported in \cref{sec:saddle_form}.  The plateau is
explained by saturation: while both thrust sequences stay saturated the
regularizer only shifts the objective by a constant, and the table's
saturation column shows where that ends.

\begin{table}[htbp]
\centering\footnotesize
\caption{Regularization sweep (Case A).  Terminal miss distance $d_f$,
  per-player $\Delta V$, thrust-saturation fraction, and extragradient
  iteration count versus the effort weight $\lambda$.}
\label{tab:ablation_lambda}
\begin{tabular}{@{}rrrrrrr@{}}
\toprule
$\lambda$ & $d_f$ (m) & $\Delta V_P$ (m/s) & $\Delta V_E$ (m/s)
  & sat$_P$ (\%) & sat$_E$ (\%) & Iters \\
\midrule
$10^{-4}$ to $10^{4}$\,$^\ddagger$ & 72.63 & 4.16 & 2.11 & 96.7 & 96.7 & 141 \\
$10^{5}$         & 72.71  & 4.15 & 2.10 & 95.0 & 96.7 & 131 \\
$10^{6}$         & 73.89  & 4.04 & 2.07 & 91.7 & 95.0 &  96 \\
$3\times10^{6}$  & 80.19  & 3.80 & 2.01 & 78.3 & 88.3 &  51 \\
$10^{7}$         & 109.59 & 3.28 & 1.89 & 53.3 & 78.3 &  26 \\
\bottomrule
\end{tabular}
\\[2pt]{\footnotesize $^\ddagger$ Eight sampled values ($10^{-4}, 10^{-3},
  10^{-2}, 10^{-1}, 10^{0}, 10^{1}, 10^{3}, 10^{4}$), all identical to the
  digits shown; $142$ iterations at $10^{4}$.}
\end{table}

\section{Monte Carlo Robustness Study}\label{app:mc}

A $200$-trial Monte Carlo study perturbs the Case~A initial state
(inspector $\pm 50$~m, $\pm 0.5$~m/s; target $\pm 20$~m,
$\pm 0.2$~m/s; uniform) and re-solves the engagement under each method.
Metrics use the absorbing first-passage convention, so the reported
$d_f$ is the separation at engagement end, at most
$r_{\mathrm{cap}} = 50$~m on captured trials.  Under game-theoretic play
the capture rate drops from $99.0\%$ (passive target, nominal MPC) to
$25.0\%$ (\cref{tab:mc}).  The extragradient and IBR solvers capture on
the same $50$ trials and produce statistically indistinguishable
terminal distances; a Welch test on $d_f$ gives $t = 0.123$,
$\mathrm{dof} = 398$, $p = 0.90$.  The extragradient solver runs about
$73\times$ faster on this machine.

\begin{table}[htbp]
\centering\footnotesize
\caption{Monte Carlo results ($200$ trials, Case A; first-passage
  convention).  Mean $\pm$ standard deviation; capture rate with a
  Wilson $95\%$ confidence interval.}
\label{tab:mc}
\begin{tabular}{@{}lrrcr@{}}
\toprule
Method & $d_f$ (m) & Cap.\ (\%) & $95\%$ CI & Time (s) \\
\midrule
Nominal MPC  & $38.3 \pm 8.5$   & 99.0 & $[96.4, 99.7]$ & $0.09$ \\
IBR          & $129.7 \pm 75.0$ & 25.0 & $[19.5, 31.4]$ & $2.34$ \\
Extragradient & $130.6 \pm 74.6$ & 25.0 & $[19.5, 31.4]$ & $0.03$ \\
\bottomrule
\end{tabular}
\end{table}

\Cref{tab:mc_by_d0} bins the trials by the initial inspector range
$d_0$.  The game-solver capture rate falls monotonically with $d_0$,
from $44.4\%$ in $[280, 320)$~m to $5.0\%$ in $[400, 440)$~m, so the
outcome is set primarily by the opening geometry.

\begin{table}[htbp]
\centering\footnotesize
\caption{Monte Carlo detail ($200$ trials, Case A).  Top: extragradient
  capture rate by initial inspector range $d_0$, with Wilson $95\%$
  intervals.  Bottom: escape-certificate confusion matrix against the
  extragradient outcome.  The $\phi_N \geq 0$ row scores $\phi_N$ used
  as a two-sided screen, which is more than \cref{thm:escape} claims;
  the theorem itself is silent there.}
\label{tab:mc_by_d0}
\begin{tabular}{@{}lrrc@{}}
\toprule
$d_0$ bin (m) & $n$ & Capture (\%) & $95\%$ CI \\
\midrule
$[280, 320)$ & 18 & 44.4 & $[25, 66]$ \\
$[320, 360)$ & 77 & 27.3 & $[19, 38]$ \\
$[360, 400)$ & 85 & 23.5 & $[16, 34]$ \\
$[400, 440)$ & 20 &  5.0 & $[1, 24]$  \\
\midrule
Certificate & EG captured & EG escaped & \\
\midrule
$\phi_N \geq 0$ (capture predicted) & 50 & 0   & \\
$\phi_N < 0$ (escape certified)     & 0  & 150 & \\
\bottomrule
\end{tabular}
\end{table}

The escape certificate $\phi_N$ predicts the extragradient outcome
without error across all $200$ trials (\cref{tab:mc_by_d0}).  Every trial
with $\phi_N \geq 0$ is captured ($50/50$; false-positive rate $0\%$,
one-sided $95\%$ upper bound $5.8\%$), and every trial with
$\phi_N < 0$ escapes ($0/150$; false-negative rate $0\%$, upper bound
$2.0\%$).  Over all trials $\mathrm{corr}(\phi_N, d_f) = -0.96$; on the
escape branch the fit $d_f = -0.997\,\phi_N + 51.1$ ($r = -0.9999$)
shows the certificate bound $r_{\mathrm{cap}} - \phi_N$ is tight.

\section{Algorithmic Complexity and Scalability}\label{app:scalability}

All three methods share the same $O(N^2 n_x n_u)$ precomputation of the
trajectory matrices.  They differ in the inner loop: nominal MPC and
IBR solve a quadratic program, at $O(N^3 n_u^3)$ per solve via
interior-point methods and hence $O(K_{\mathrm{IBR}} N^3 n_u^3)$ over
$K_{\mathrm{IBR}}$ best-response rounds, whereas the extragradient
iteration costs only $O(K_{\mathrm{SP}}\, n_P N n_u)$, \emph{linear} in
$N n_u$, because it performs matrix--vector products and projections
and nothing else.  Empirically, the extragradient loop costs
$89.4~\mu$s per iteration for Case~A ($12.6$~ms over $141$ iterations,
after an $11.0$~ms one-time matrix setup).

The absolute cost, however, depends heavily on how the QP inside IBR
is implemented (\cref{tab:timing}).  One-shot, the extragradient solver
is $41\times$ faster than as-is IBR that rebuilds the CVXPY problem each
call ($971.5$ vs $23.5$~ms) and $24.8\times$ faster than a
parametrized-CVXPY IBR.  Amortized, a condensed warm-started
direct-OSQP IBR solves the same game in $1.36$~ms after an $18.0$~ms
one-time setup; all three IBR variants agree at $d_f = 72.28$~m.  The
escape screen and the security bound bracket the game cost: the $\phi_N$
screen costs $10.8$~ms from scratch and $0.064$~ms once the terminal
maps are in hand, while the pursuer-security SOCP costs $177.4$~ms.

\begin{table}[htbp]
\centering\small
\caption{One-shot versus amortized solver cost for Case~A
  ($N = 30$, $n_u = 2$).  Median over $20$ repeats on an idle machine.}
\label{tab:timing}
\begin{tabular}{@{}lrrl@{}}
\toprule
Method & Median (ms) & Iters & Quality \\
\midrule
IBR, as-is (CVXPY rebuilt per call)    & 971.5 & 3   & $d_f = 72.28$~m \\
IBR, parametrized CVXPY (DPP)          & 583.9 & 3   & $d_f = 72.28$~m \\
IBR, condensed OSQP (warm-started)     & 1.36  & 4   & $d_f = 72.28$~m \\
\quad condensed-OSQP setup (once)      & 18.0  & --  & -- \\
Extragradient (full call)              & 23.5  & 141 & $d_f = 72.63$~m \\
Nominal MPC (single QP)                & 36.7  & --  & passive target \\
$\phi_N$ screen ($L=48$), from scratch & 10.8  & --  & $\phi_N = -22.2$~m \\
$\phi_N$ screen ($L=48$), maps reused  & 0.064 & --  & $\phi_N = -22.2$~m \\
Pursuer-security SOCP ($L=96$)         & 177.4 & --  & $\bar{V} = 319.7$~m \\
\bottomrule
\end{tabular}
\end{table}

For $n_P$ pursuers, the extragradient method scales as
$O(n_P \cdot n_p \cdot N n_u)$, linearly in the number of pursuers.
\Cref{tab:scaling} presents empirical timing data.

\begin{table}[htbp]
\centering\small
\caption{Empirical solve times (milliseconds) versus the number of
  pursuers $n_P$ ($N = 30$, $n_u = 2$; median over $20$ repeats).  The
  condensed IBR-multi column is warm-started (one OSQP object per
  agent, set up once).}
\label{tab:scaling}
\begin{tabular}{@{}rrrrr@{}}
\toprule
$n_P$ & MPC-multi & IBR-multi (as-is) & IBR-multi (cond.) & EG-multi \\
\midrule
1 & 36.5  & 961.5  & 1.45 & 28.8 \\
2 & 74.0  & 1102.7 & 1.69 & 25.5 \\
3 & 116.5 & 1218.5 & 3.45 & 36.0 \\
5 & 211.5 & 2619.7 & 10.6 & 75.8 \\
\bottomrule
\end{tabular}
\end{table}

The extragradient method scales roughly linearly in $n_P$ and retains a
$33$--$43\times$ speed advantage over as-is IBR across all agent counts.
A condensed, warm-started IBR is faster still once its per-agent setup is
amortized, so the speed ratio is not the operative advantage; what is,
is set out under \emph{Onboard implementation} below.

\paragraph{Higher-dimensional models}
The extragradient core is dimension-agnostic.  Each iteration is two
gradient evaluations and two box projections, all matrix--vector
products, so the cost grows linearly in $Nn_u$ rather than
combinatorially, in contrast to the exponential grid growth that
confines Hamilton--Jacobi--Isaacs reachability to the planar case.
Restoring out-of-plane translation is the case measured in
\cref{sec:scope_checks}, where six states cost $19$--$22$~ms against
$25$~ms planar.  Attitude
states enter through the control-effect matrix and are the natural next
extension; there the added difficulty is modeling fidelity, since
attitude couples nonlinearly, rather than solver complexity.

\paragraph{Onboard implementation}
The same structure is what recommends the method for flight hardware:
the iteration performs no matrix factorization and calls no compiled
optimization backend, so it needs neither a linear-algebra library with
dynamic memory nor a per-step problem build, and its working set is the
$O(Nn_u)$ iterate plus the precomputed terminal maps.  We measured
$89.4~\mu$s per iteration and about $25$~ms per Case~A solve on a
desktop and make no claim about a specific flight processor, which we
did not benchmark; the argument for onboard suitability is structural,
resting on the absence of factorization and compilation, not on a
ported measurement.

\section{Notation Summary}\label{app:notation}

Every symbol below carries one meaning, with two exceptions that are
distinguished by their argument.  $J$ is the scalar game payoff,
whereas $\mathcal{J}(\nu,\nu_0)$ is the Yamanaka--Ankersen
integral of \cref{eq:j_int_full}; and $h$ prints both for the
support-function operator $\support_{\mathcal{S}}(\cdot)$ and, in the
elliptical model, for the specific angular momentum $\sqrt{\mu p}$.
Three further collisions were removed rather than tolerated: the
eccentric anomaly is written $\mathcal{E}$ so that $E$ stays the evader
label, the geometric contraction factor of \cref{thm:convergence} is
written $q$, and the reachable-set excess-area ratio is written
$\alpha_L$, so that $\rho(\nu)$ denotes only the radius ratio
$1 + e\cos\nu$.  Altitude is given in prose (\SI{500}{\kilo\metre})
rather than as a symbol, since $h$ is taken.

\begin{table}[htbp]
\centering\footnotesize
\caption{Summary of notation used throughout the paper.  Dimensions are
  given where they are not implied by the definition; $n_x = 4$,
  $n_u = n_p = 2$ in all experiments.}
\label{tab:notation}
\begin{tabular}{@{}ll@{\hspace{1.2em}}ll@{}}
\toprule
Symbol & Description & Symbol & Description \\
\midrule
$\state$ & State vector, $\R^{n_x}$                     & $\lambda$ & Effort regularization weight \\
$\ctrl$ & Control (acceleration), $\R^{n_u}$            & $\kappa$ & Keep-out penalty weight \\
$\pos$ & Position vector, $\R^{n_p}$                    & $\eta$ & Extragradient step size \\
$A_c, B_c$ & Continuous-time matrices                   & $\norm{G}_2$ & $\max(\norm{G_P}_2, \norm{G_E}_2)$ \\
$A_d, B_d$ & Discrete-time matrices                     & $\sigma_{\max}(G_E)$ & Largest singular value of $G_E$ \\
$n$ & Mean motion, rad/s                                & $L_F$ & Lipschitz constant of $F$ \\
$\Delta t$ & Timestep, s                                & $\Pi_{\mathcal{Z}}$ & Projection onto $\mathcal{Z}$ \\
$N$ & Horizon length                                    & $d_f$ & Terminal miss distance, m \\
$\bar{u}_P, \bar{u}_E$ & Max acceleration, mm/s$^2$     & $\phi_N,\ \phi_N^{(i)}$ & Escape certificate, m \\
$r_{\mathrm{cap}}$ & Capture radius, m                  & $\bar{V}$ & Pursuer security value, m \\
$\ctrlset$ & Control constraint set                     & $\gamma_P$ & Pursuer best-response gap, m \\
$\reachset_N$ & $N$-step reachable set                  & $\psi_N$ & Capture-pair existence value, m \\
$\support_{\mathcal{S}}$ & Support function of $\mathcal{S}$ & $w_i$ & Adaptive pursuer weight \\
$\bm{d}_\ell$ & Template direction, $\R^{n_p}$          & $n_P$ & Number of inspectors \\
$L$ & Number of template directions                     & $\obstset_m$ & Keep-out zone $m$ \\
$\alpha_L$ & Reachable-set excess-area ratio            & $q$ & Contraction factor, \cref{thm:convergence} \\
 & & \multicolumn{2}{@{}l}{\textit{Elliptical orbit extension}} \\
 & & $e$ & Orbital eccentricity \\
$T_N$ & Terminal-position extractor                     & $\nu$ & True anomaly, rad \\
$S$ & Control-to-state matrix                           & $a$ & Semi-major axis, m \\
$G_P, G_E$ & Terminal position response                 & $p$ & Semi-latus rectum $a(1-e^2)$, m \\
$\bm{g}_0$ & Free-response terminal offset              & $h$ & Angular momentum $\sqrt{\mu p}$ \\
$\bm{\delta}$ & $G_P\bm{U}^{(P)}\!-\!G_E\bm{U}^{(E)}\!+\!\bm{g}_0$ & $\rho(\nu)$ & $1 + e\cos\nu$ \\
$J$ & Game payoff function                              & $\mathcal{J}(\nu,\nu_0)$ & YA fundamental integral \\
 & & $\Phi(\nu,\nu_0)$ & Yamanaka--Ankersen STM \\
 & & $A_k, B_k$ & Time-varying matrices at step $k$ \\
 & & $\Phi_{N:j}$ & Transition from step $j$ to $N$ \\
\bottomrule
\end{tabular}
\end{table}

\end{document}